\newtheorem{thm}{Theorem}[section]
\newtheorem{prop}[thm]{Proposition}
\newtheorem{lem}[thm]{Lemma}
\newtheorem{rem}[thm]{Remark}
\newtheorem{conj}{Conjecture}
\theoremstyle{definition}
\newtheorem{defi}{Definition}[section]
\newcommand{\R}{\mathbb{R}}
\renewcommand{\P}{\mathbb{P}}
\newcommand{\N}{\mathbb{N}}
\newcommand{\Z}{\mathbb{Z}}
\newcommand{\C}{\mathbb{C}}
\newcommand{\Y}{\mathbb{Y}}
\newcommand{\E}{\mathbb{E}}
\newcommand{\Conf}{\mathrm{Conf}}
\newcommand{\s}{\mathbf{s}}
\newcommand{\x}{\mathbf{x}}
\newcommand{\h}{\mathbf{h}}
\newcommand{\e}{\mathbf{e}}
\newcommand{\p}{\mathbf{p}}
\newcommand{\uuu}{\mathbf{u}}
\newcommand{\vvv}{\mathbf{v}}
\title{A determinantal point process governed by an integrable projection kernel is Giambelli compatible}
\author[A. I. Bufetov]{Alexander I. Bufetov}
\address{CNRS, UMR 7373, \'Ecole centrale de Marseille, Institut de Math\'ematiques de Marseille, Aix-Marseille Universit\'e, Marseille, France \\
Steklov Mathematical Institute of Russian Academy of Sciences, Moscow, Russia \\
Institute for Information Transmission Problems, Russian Academy of Sciences, Moscow, Russia}
\author[P. Lazag]{Pierre Lazag}
\address{CNRS, UMR 7373, \'Ecole centrale de Marseille, Institut de Math\'ematiques de Marseille, Aix-Marseille Universit\'e, Marseille, France}
\date{}
\begin{document}

\begin{abstract}
The first main result of this note, Theorem \ref{thm1}, establishes the determinantal identities  \eqref{form:mainformula} and \eqref{form:mainformulatilde} for the expectation, under a determinantal point process governed by an integrable projection kernel, of scaling limits of  characteristic polynomials sampled at several points.  The determinantal identities \eqref{form:mainformula} and \eqref{form:mainformulatilde} can be seen as the scaling limit of the identity of 
Fyodorov and Strahov for the averages of ratios of products of the values of the characteristic polynomial
of a Gaussian unitary matrix. Borodin, Olshanski and Strahov derived the determinantal identity of Fyodorov and Strahov from the stability of the Giambelli formula  under averaging. In  Theorem \ref{thm2} the stability of the Giambelli formula under averaging is established for  determinantal point process with  integrable projection kernels. The  proof of Theorems \ref{thm1} and \ref{thm2} relies on the characterization of conditional measures of our point processes as orthogonal polynomial ensembles.
\end{abstract}
\maketitle
\section{Introduction}
\subsection{Formulation of the main results}
The main results of this note are Theorem \ref{thm1} and its equivalent reformulation, Theorem \ref{thm2} . In the spirit of Fyodorov and Strahov, Theorem \ref{thm1}, described in Section \ref{sec:charpolintro}, establishes the determinantal identities \eqref{form:mainformula} and \eqref{form:mainformulatilde} for averages of products of ratios of scaling limits of characteristic polynomials for determinantal point processes induced by integrable projection kernels. The determinantal identities \eqref{form:mainformula} and \eqref{form:mainformulatilde} are expressed in terms of Giambelli compatibility introduced by Borodin, Olshanski and Strahov in \cite{giambelli}, see Theorem \ref{thm2}  in Section \ref{sec:giambelliintro}.
\subsubsection{An identity for analogues of characteristic polynomials for determinantal point processes} \label{sec:charpolintro}
Fyodorov and Strahov proved in \cite{fyodorovstrahov1}, \cite{fyodorovstrahov2} that, if $P(z)$ is the characteristic polynomial of a Gaussian Hermitian matrix, then we have
\begin{align} \label{form:charpol}
\E_\P \left[ \frac{P(z_1) \dots P(z_n)}{P(w_1) \dots P(w_n)} \right] = \det\left( \frac{1}{z_i -w_j} \right)_{1 \leq i,j \leq k}^{-1} \det \left( \frac{1}{z_i-w_j}\E_\P\left[ \frac{P(z_i)}{P(w_j)} \right] \right)_{i,j=1}^n.
\end{align}

Our first main result is a sufficient condition for determinantal point processes on a subset of $\R$ to satisfy an analogue of the above identity (\ref{form:charpol}). Let $E$ be a subset of $\R$, either open or countable and without accumulation points. The space of configurations on $E$, denoted by $\Conf(E)$, is the set of all subsets of $E$ without accumulation points. Let $\mu$ be a Radon measure on $E$ and let $\P$ be a determinantal point process on $(E,\mu)$ with correlation kernel $K$, see section \ref{sec:det} below. We assume that the kernel $K$ induces an orthogonal projection on $L^2(E,d\mu)$, and that we have 
\begin{align} \label{cond:hilbertschmidt}
\int_E \frac{K(x,x)}{1+x^2}d\mu(x) <+\infty.
\end{align}
Our first task is to regularize the products 
\begin{align*}
\prod_{x \in X} \frac{z-x}{w-x}
\end{align*}
which might diverge. For $ R >0$ and $u \in \C$ such that $u - i R \notin \R$, we introduce the function 
\begin{align}
g_u : \R &\rightarrow \C \\ \label{eq:defg}
 x &\mapsto 1 - \frac{u}{x + i R}.
\end{align}
Observe that, for $z, w \in \C \setminus \R$, the ratio $g_{z+ i R}/g_{w + i R}$ does not depend on $R$ as we have 
\begin{align*}
\frac{g_{z+ i R}(x)}{g_{w + i R}(x)} = \frac{z-x}{w-x}.
\end{align*}
In what follows, $\log$ stands for the principal branch of the logarithm, defined on $\C \setminus (-\infty,0]$. The following Proposition is proved in Section \ref{sec:proof1}.
\begin{prop} \label{prop:1} Let $\P$ be a determinantal point process on $(E,\mu)$ induced by an orthogonal projection $K$ satisfying \eqref{cond:hilbertschmidt}.
\begin{enumerate}
\item[$(i)$]  For any $u \in \C \setminus \R$, the limits
\begin{align} \label{lim:prop11}
\Psi_X(u)&:=\lim_{ T \rightarrow + \infty} \left\{ \exp \left( - \int_{-
T}^{T} \log g_{u+ i R}(x) K(x,x) dx \right)  \prod_{x \in X, \hspace{0.1cm} |x| \leq T } g_{u+ i R}(x)\right\} \\ \label{lim:prop12}
\tilde{\Psi}_X(u)&:= \lim_{ T \rightarrow + \infty}  \left\{ \exp \left( - u\int_{-
T}^{T} \frac{1}{x+iR} K(x,x) dx \right)  \prod_{x \in X, \hspace{0.1cm} |x| \leq T } g_{u+ i R}(x)\right\}
\end{align}
exist in $L^1(\Conf(E),\P)$.
\item[$(ii)$] For any $n \in \N$, and any $z_1,\dots, z_n,w_1, \dots w_n \in \C\setminus \R$, the algebra generated by the random variables\begin{align*}
\Psi_X(z_j)^{\pm 1}, \hspace{0.1cm} \Psi_X(w_j)^{\pm 1},
\end{align*}
and the algebra generated by
\begin{align*}
\tilde{\Psi}_X(z_j)^{\pm 1}, \hspace{0.1cm} \tilde{\Psi}_X(w_j)^{\pm 1},
\end{align*}
are contained in $L^1(\Conf(E), \P)$.
\item[$(iii)$]There exists a Borel set $\mathcal{W} \subset \Conf(E)$ satisfying $\P(\mathcal{W})=1$ such that for any compact set $\mathcal{K} \subset \C \setminus \R$, there exists an increasing sequence $(T_N)_{N \in \N}$ tending to $+ \infty$ as $N \rightarrow + \infty$ along which the convergences in (\ref{lim:prop11}) and (\ref{lim:prop12}) are uniform for $u \in \mathcal{K}$ for all $X \in \mathcal{W}$. The functions $u \mapsto \Psi_X(u)$  and $u \mapsto \tilde{\Psi}_X(u)$ are analytic for all $X \in \mathcal{W}$.
\end{enumerate}
\end{prop}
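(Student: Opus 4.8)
The plan is to establish existence of the regularized limits by recognizing the exponent-corrected products as multiplicative additive functionals of the point process, whose logarithms are linear statistics to which the theory of determinantal point processes applies. Let me sketch the key pieces.

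First, consider the logarithm of the bracketed expression in \eqref{lim:prop11}. Writing the product as an exponential, the random quantity becomes
$$
\sum_{x \in X,\, |x| \le T} \log g_{u+iR}(x) - \int_{-T}^{T} \log g_{u+iR}(x)\, K(x,x)\, dx.
$$
This is exactly a centered linear statistic $S_T[\phi] = \sum_{x \in X,\,|x|\le T} \phi(x) - \E_\P[\cdots]$ for the test function $\phi(x) = \log g_{u+iR}(x)$, since under a determinantal process $\E_\P\big[\sum_{x\in X} \phi(x)\big] = \int \phi(x) K(x,x)\, dx$. The decay condition \eqref{cond:hilbertschmidt} is what makes this work: from \eqref{eq:defg}, $\log g_{u+iR}(x) = -\tfrac{u+iR}{x+iR} + O(x^{-2})$ as $|x|\to\infty$, so $|\phi(x)| = O(1/|x|)$ and $|\phi(x)|^2 = O(1/x^2)$. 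Hence $\int |\phi|^2 K(x,x)\, dx < \infty$ by \eqref{cond:hilbertschmidt}, which is precisely a Hilbert–Schmidt-type finiteness ensuring the linear statistic has finite variance.

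I would then prove $L^1$ convergence of the exponentiated centered statistic, i.e. of $\exp(S_T[\phi])$, as $T\to\infty$. The cleanest route is through the variance bound. For a projection determinantal process, the variance of a centered linear statistic is controlled by the Hilbert–Schmidt norm of commutators; concretely, $\mathrm{Var}_\P(S_T[\phi])$ is bounded in terms of $\int|\phi|^2 K(x,x)\,dx$ together with an off-diagonal kernel term, and the tail contributions $\sum_{|x|>T}$ vanish in $L^2$ as $T\to\infty$ because the test function is square-integrable against the kernel. Establishing that $S_T[\phi]$ is Cauchy in $L^2$ (hence convergent), and that the exponentials are uniformly integrable, gives $L^1$ convergence of $\Psi_X(u)$; part (i) then follows, and $\tilde\Psi_X(u)$ is handled identically after noting that $\log g_{u+iR}(x) - \big(-\tfrac{u}{x+iR}\big) = O(x^{-2})$, so the two centerings differ by an absolutely convergent integral and the same argument applies. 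For part (ii), the key observation is that products and quotients of the $\Psi_X(z_j)^{\pm1}$ correspond to summing the test functions $\pm\log g_{z_j+iR}$, and any finite algebraic combination is again of the form $\exp(S_T[\psi])$ for a single test function $\psi$ with the same $O(1/|x|)$ decay; hence each generator of the algebra is an $L^1$ limit of the same type and lies in $L^1(\Conf(E),\P)$.

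**The hard part will be** part (iii): upgrading the $L^1$ (hence in-probability, along a subsequence almost-sure) convergence to \emph{locally uniform} almost-sure convergence in the parameter $u$, together with analyticity of the limit. The plan is to fix a compact $\K \subset \C\setminus\R$ and treat $u\mapsto \exp(S_T[\log g_{u+iR}])$ as a sequence of random holomorphic functions of $u$. I would first extract, by a diagonal argument over a countable dense set of parameters and the $L^1$ convergence from (i), an almost-sure event $\mathcal W$ and a subsequence $(T_N)$ along which pointwise convergence holds. To promote this to uniform convergence and holomorphy of the limit, I would invoke a normal-families / Vitali–Porter argument: the approximants $u \mapsto \Psi_X^{(T_N)}(u)$ are holomorphic in $u$ on a neighborhood of $\K$ (the integrand $\log g_{u+iR}(x)$ and the finite product depend holomorphically on $u$), so it suffices to establish a uniform-in-$N$ local bound on $\K$ for $X\in\mathcal W$, after which pointwise convergence on a dense set forces locally uniform convergence to a holomorphic limit. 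The delicate estimate is this uniform bound: one must control $\sup_{u\in\K}|\log g_{u+iR}(x)|$ uniformly, which again reduces to the $O(1/|x|)$ decay being uniform over the compact set $\K$, and to showing the tail sums and integrals are uniformly small. Obtaining this uniform control simultaneously for all $X$ in a single full-measure set $\mathcal W$ — rather than a $u$-dependent null set — is the main technical obstacle, and it is exactly where the analyticity in $u$ combined with the quantitative variance estimates must be leveraged together.
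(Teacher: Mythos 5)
Your identification of the regularized products as exponentials of centered linear statistics with test function $\log g_{u+iR}$, the role of \eqref{cond:hilbertschmidt} via the $O(1/|x|)$ decay, and the reduction of part $(ii)$ to a single combined test function all match the paper's setup. The genuine gap is the step ``the exponentials are uniformly integrable'': finite variance of the centered statistic $S_T[\phi]$ does not imply uniform integrability of $\exp(S_T[\phi])$ (for a real random variable $Y$ of finite variance, $e^Y$ need not even be integrable), and you supply no mechanism for it. This is exactly the content of Proposition \ref{prop:regmultfunction}, adapted from \cite{bufetovquasisymmetries}: one writes $|\exp(\overline{S}_{\log g_T})|=\exp(\overline{S}_{\log|g_T|})$, uses the Fredholm determinant identity $\E_\P\prod_{x\in X}|g_T(x)|=\det(I+\chi_{[-T,T]}K(|g_T|-1))\le\exp\mathrm{Tr}(\chi_{[-T,T]}K(|g_T|-1))$, and compares $|g|-1$ with $\log|g|$ to second order to obtain $\E_\P|\exp(\overline{S}_{\log g_T})|\le\exp\bigl(C\int_E|g-1|^2K(x,x)\,d\mu\bigr)$ uniformly in $T$. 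This multiplicative-functional estimate, not the variance formula, is what makes parts $(i)$ and $(ii)$ close; your argument as written stops short of it.

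For part $(iii)$ you correctly flag the obstacle (a single full-measure set $\mathcal{W}$ valid for all $u$ simultaneously) but leave it unresolved, and your Vitali--Porter route would still require a uniform a priori bound on $\mathcal{K}$ that you do not establish. The paper sidesteps normal families entirely (Lemma \ref{lem:cvunif}): decompose the centered statistic as $S(X,u;T_N)+uH(X;T_N)+I(u;T_N)$, where $S(X,u;T_N)=\sum_{x\in X\cap[-T_N,T_N]}(f_u(x)-uf_1(x))$ is absolutely convergent a.s.\ and uniformly on compacts in $u$ because $f_u(x)-uf_1(x)=O((1+x^2)^{-1})$ and \eqref{cond:hilbertschmidt} makes $\sum_{x\in X}(1+x^2)^{-1}$ a.s.\ finite, $I(u;T_N)$ is a deterministic absolutely convergent integral, and $H(X;T_N)$ is the centered statistic of $f_1$ alone, independent of $u$. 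The only conditionally convergent piece is therefore linear in $u$ with a $u$-independent random coefficient, so almost sure convergence of $H(X;T_N)$ along a subsequence immediately yields uniform convergence on compacts and analyticity of the limit, on the single event where $H$ converges and the remainder sum is finite. You would do well to adopt this decomposition rather than attempt the uniform bound directly.
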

We focus on determinantal point processes governed by an orthogonal projection with correlation kernel 
having an integrable form in the following sense.
\begin{defi} \label{def:integrable}We say that the kernel $K$ has an \emph{integrable form} if there exist an open set $U\supset E$ and smooth functions $A,B : U \rightarrow \R$ such that 
\begin{align*}
K(x,y)&= \frac{A(x)B(y)-A(y)B(x)}{x-y}, \quad x \neq y \\
K(x,x) &=A'(x)B(x)-A(x)B'(x).
\end{align*}
\end{defi}
Our first main result is the following Theorem. The symbol $\E_{\P}$ stands for the expectation under $\P$. The definition of rigidity, in the sense of Ghosh and Peres,  is recalled in Section \ref{sec:ppcond}, Definition \ref{def:rigid}.
\begin{thm} \label{thm1} If $\P$ is a rigid determinantal point process on $(E,\mu)$ induced by a projection kernel having an integrable form, then for all $n \in \N$ and all pairwise distinct non-real complex numbers $z_1,\dots z_n,w_1,\dots w_n \in \C \setminus \R$, we have
\begin{align} \label{form:mainformula}
\E_\P \left[ \prod_{i=1}^n \frac{\Psi_X(z_i)}{\Psi_X(w_i)} \right] = \det \left( \frac{1}{z_i-w_j} \right)_{1 \leq i,j \leq n}^{-1} \det \left( \frac{1}{z_i-w_j} \E_\P \left[ \frac{\Psi_X(z_i)}{\Psi_X(w_j)} \right] \right)_{i,j=1}^n.
\end{align}
and
\begin{align} \label{form:mainformulatilde}
\E_\P \left[ \prod_{i=1}^n \frac{\tilde{\Psi}_X(z_i)}{\tilde{\Psi}_X(w_i)} \right] = \det \left( \frac{1}{z_i-w_j} \right)_{1 \leq i,j \leq n}^{-1} \det \left( \frac{1}{z_i-w_j} \E_\P \left[ \frac{\tilde{\Psi}_X(z_i)}{\tilde{\Psi}_X(w_j)} \right] \right)_{i,j=1}^n.
\end{align}
\end{thm}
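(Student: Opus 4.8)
The plan is to reduce the two identities to their finite counterparts for orthogonal polynomial ensembles and then to pass to the scaling limit. Write $\Psi_X^T(u)=C_T(u)\prod_{x\in X,\,|x|\le T}g_{u+iR}(x)$ for the truncation inside \eqref{lim:prop11}, where the prefactor $C_T(u)=\exp(-\int_{-T}^{T}\log g_{u+iR}(x)K(x,x)\,dx)$ is deterministic, and similarly for $\tilde\Psi_X^T$ using $\tilde C_T$ from \eqref{lim:prop12}. The first observation is that these deterministic prefactors cancel in the proposed identity: pulling $C_T(z_i)$ out of the $i$-th row and $C_T(w_j)^{-1}$ out of the $j$-th column of the matrix $(\,(z_i-w_j)^{-1}\E_\P[\Psi_X^T(z_i)/\Psi_X^T(w_j)]\,)$ produces exactly the scalar $\prod_i C_T(z_i)/C_T(w_i)$ that also multiplies the left-hand side, while leaving $\det((z_i-w_j)^{-1})^{-1}$ untouched. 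Hence, for each fixed $T$, the identity for $\Psi_X^T$ is equivalent to the same identity written for the pure ratios $R_{z,w}^T:=\prod_{x\in X,\,|x|\le T}\frac{z-x}{w-x}$, and likewise for $\tilde\Psi_X^T$. By Proposition \ref{prop:1}$(iii)$ the truncations converge $\P$-a.s. along a suitable sequence $T_N\to+\infty$ and in $L^1$, uniformly on compact subsets of $\C\setminus\R$, and the limits are analytic; thus both sides of \eqref{form:mainformula} and \eqref{form:mainformulatilde} are analytic in $(z_1,\dots,w_n)$ and it suffices to establish the truncated identity for every $T$ and then let $T\to+\infty$, the determinant and its Cauchy prefactor being continuous in the entrywise-convergent pairwise expectations.

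Second, I would establish the finite version for a single orthogonal polynomial ensemble with $N$ particles. Expanding $\prod_i(z_i-x)/\prod_i(w_i-x)$ over the ensemble via Cauchy-type expansions into Schur functions of the eigenvalue variables, the ensemble average of each Schur function is evaluated using the Christoffel--Darboux / integrable structure of the kernel from Definition \ref{def:integrable}; recombining the resulting minors and invoking the classical Cauchy determinant evaluation yields precisely \eqref{form:charpol} at finite $N$. Equivalently, this is the assertion that finite orthogonal polynomial ensembles are Giambelli compatible in the sense of \cite{giambelli}, which is the form in which I would record it, anticipating Theorem \ref{thm2}, where \eqref{form:mainformula}--\eqref{form:mainformulatilde} are identified with Giambelli compatibility.

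Third, to transfer this to the infinite process I would use rigidity: conditionally on the configuration outside $[-T,T]$, the number of particles inside is almost surely determined, and the conditional measure is an orthogonal polynomial ensemble, by the characterization of conditional measures recalled in Section \ref{sec:ppcond}. Since $R_{z,w}^T$ depends only on the interior particles, the finite identity applies conditionally. Here lies the main obstacle: the right-hand side is a determinant, hence nonlinear, in the single-hook averages $\E_\P[\,\cdot\mid\mathcal F_{\mathrm{ext}}]$, so one cannot simply integrate the conditional identity over the exterior — the conditional averages are genuinely exterior-dependent and their Giambelli determinant does not commute with the expectation for a generic mixture. The resolution, and the technical heart of the argument, is that for a determinantal process all the relevant averages are minors of the single projection kernel $K$: the regularized single-hook expectations $\E_\P[\Psi_X(z)/\Psi_X(w)]$ are entries built from $K$, and the full average $\E_\P[\prod_i\Psi_X(z_i)/\Psi_X(w_i)]$ is the associated compound minor, so that the Giambelli relation between them is the classical determinantal (Jacobi/Dodgson) identity for minors of one fixed operator rather than an averaged statement. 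The conditional orthogonal polynomial ensembles and rigidity serve to justify rigorously, via Proposition \ref{prop:1}, that these global expectations exist and are computed by the kernel in the limit $T\to+\infty$; once this is in place, the determinantal identity for minors of $K$ closes the proof of \eqref{form:mainformula}, and the identical argument with $\tilde C_T$ and $\tilde\Psi_X$ gives \eqref{form:mainformulatilde}. I expect this last step — showing that the expectation genuinely commutes with the Giambelli determinant, equivalently that the determinantal minor structure survives the regularization and the scaling limit — to be the crux of the matter.
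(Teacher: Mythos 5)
Your reduction to conditional orthogonal polynomial ensembles via rigidity, and your identification of the obstacle --- that the Giambelli determinant is nonlinear in the single-hook averages, so the conditional identity cannot simply be integrated over the exterior configuration --- both match the paper. The finite-$N$ input (orthogonal polynomial ensembles are Giambelli compatible for $\rho_0^R$) is also correct; the paper proves it via the Andr\'eief formula and the generalized Giambelli formula of Macdonald (Propositions \ref{prop:giambelligeneral} and \ref{prop:opegiambelli}) rather than via the Christoffel--Darboux structure, but that difference is inessential. Your observation that the deterministic prefactors cancel row-by-row and column-by-column in the determinant is likewise sound and plays the same role as Proposition \ref{prop:shift}.

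The genuine gap is in your resolution of the obstacle. You assert that $\E_\P[\Psi_X(z)/\Psi_X(w)]$ and $\E_\P\left[\prod_i \Psi_X(z_i)/\Psi_X(w_i)\right]$ are, respectively, entries and compound minors built from the single kernel $K$, so that the identity reduces to a classical Jacobi/Dodgson minor identity for one fixed operator. This is not substantiated and does not close the argument: these expectations are regularized Fredholm determinants of the form $\det(I+K(g-1))$, and expressing them through an $n\times n$ determinant in the $z_i,w_j$ is essentially the content of the theorem, not a minor identity one can invoke. The mechanism the paper actually uses, and which is absent from your proposal, is tail triviality combined with reverse martingale convergence (Lemma \ref{lem:mainlem}): for a determinantal process governed by an orthogonal projection the tail sigma-algebra is trivial (Bufetov--Qiu--Shamov, Osada--Osada, Lyons), the family $\E_\P[\,\cdot\,|X,T]$ is a reversed martingale in $T$, and hence $\E_\P[F\,|\,X,T]\to\E_\P[F]$ almost surely and in $L^1$ as $T\to+\infty$. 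The conditional Fyodorov--Strahov identity holds for every $T$ by the finite-$N$ result; in the limit each conditional expectation inside the determinant becomes the deterministic unconditional expectation, so the determinant of conditional expectations converges to the determinant of constants and the identity survives the passage to the limit. This is exactly the step you leave open, as you acknowledge in your final sentence.
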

\begin{rem}In order to consider the case when some of the $z_j$ (or $w_j$) are equal, we may use the following formula, which is a direct application of the Cauchy determinant formula 
\begin{align*}
\det \left( \frac{1}{z_i-w_j} \right)_{1 \leq i,j \leq k}^{-1} = \frac{\prod_{1 \leq i , j \leq k}(z_i-w_j)}{\prod_{ 1 \leq i <j \leq k}(z_i-z_j)(w_j-w_i)},
\end{align*}
and then apply L'H\^opital rule. For example, we have
\begin{align*}
\E_\P \left[ \frac{\Psi_X(z_1)^2}{\Psi_X(w_1)\Psi_X(w_2)} \right] = \frac{(z_1-w_1)^2(z_1-w_2)^2}{w_1-w_2} \frac{\partial}{\partial z_2} \left. \left\{ \det \left( \frac{1}{z_i-w_j} \E_\P \left[ \frac{\Psi_X(z_i)}{\Psi_X(w_j)} \right] \right)_{i,j=1}^2 \right\} \right|_{z_2=z_1}.
\end{align*}
\end{rem}

\subsubsection{Giambelli compatible point processes} \label{sec:giambelliintro}
Theorem \ref{thm1} can be restated in terms of Giambelli compatibility. The formalism of \emph{Giambelli compatible point processes} was introduced by Borodin-Olshanski-Strahov in \cite{giambelli} . Let $\Lambda$ be the algebra over $\C$ of symmetric functions, i.e. of symmetric polynomials of finite degree in infinitely many formal variables $\x_1,\x_2,\dots$, see \cite{macdonald}. We consider homomorphisms of the algebra $\Lambda$  into the algebra of functions on the space of configurations on a Polish space $E$, that is, the space $\Conf(E)$ defined as the space of all subsets of $E$ without accumulation points and endowed with the natural Borel sigma-algebra, see Section \ref{sec:ppcond} below.
\begin{defi}An algebra homomorphism $\rho$ from $\Lambda$ to the algebra of Borel functions on $\Conf(E)$ is called a \emph{specialization}. Given a Borel probability measure $\P$ on $\Conf(E)$, we say that the specialization $\rho$ is an $L^1$-\emph{specialization} if its image is contained in $L^1(\Conf(E), \P)$.
\end{defi}
We extend a specialization $\rho$ to the algebra of power series with coefficients in $\Lambda$, denoted by $\Lambda[[z]]$, by setting
\begin{align*}
\rho(F)(z) := \sum_{k=0}^{+\infty} \rho(F_k)(X)z^k, \quad F= \sum_{k=0}^{+\infty} F_kz^k \in \Lambda [[z]], \hspace{0.1cm} X \in \Conf(E).
\end{align*}
A distinguished basis of the algebra $\Lambda$ is formed by the Schur functions $\s_\lambda$, indexed by Young diagrams $\lambda$, cf. Section \ref{sec:symfunctions}. Using the Frobenius notation, write $\lambda= (p_1, \dots, p_d | q_1, \dots, q_d)$. The Schur function $\s_\lambda$ satisfies the Giambelli formula 
\begin{align*}
\s_\lambda = \det ( \s_{(p_i|q_j)} )_{i,j=1}^d.
\end{align*}
A \emph{Giambelli compatible} point process is then a point process for which the Giambelli formula is stable under averaging. More precisely, we have
\begin{defi}[Borodin-Olshanski-Strahov, \cite{giambelli}] \label{def:giambelli} Let $\P$ be a point process on $E$ and let $\rho$ be an $L^1$-specialization. We say that the pair $(\P, \rho)$ is \emph{Giambelli compatible} if, for any partition $\lambda= (p_1,\dots , p_d |q_1, \dots, q_d)$, we have 
\begin{align} \label{eq:giambelli0}
\E_{\P} [ \rho (\s_\lambda) ] = \det \left( \E_{\P} [ \rho(\s_{p_i,|q_j}) ] \right)_{i,j=1}^d.
\end{align}
\end{defi}

Under certain technical conditions on the specialization $\rho$, the Giambelli compatibility is equivalent to the validity of formula (\ref{form:charpol}), where the characteristic polynomials $P(z)$ are replaced by the specialization of the generating series of the complete homogeneous and the elementary symmetric functions, see Proposition \ref{prop:propgiambelli}. Namely, we introduce the notion of \emph{uniform $L^1$-specialization} for which the specializations of these generating series converge and belong to $L^1$, see Section \ref{sec:giambelli} below.

For $R >0$, consider the specialization $\rho^R$ given by 
\begin{align*}
\p_k \mapsto \rho^R(\p_k) : = \sum_{x \in X} \frac{1}{(x+ i R)^k} - \E_\P \sum_{x \in X} \frac{1}{(x+ i R)^k}, \quad k=1,2,\dots
\end{align*}
and the specialization $\tilde{\rho}^R$ given by
\begin{align*}
\tilde{\rho}(\p_1) = \rho (\p_1), \quad \tilde{\rho}(\p_k) = \sum_{x \in X} \frac{1}{(x+ i R)^k}, \quad k \geq 2,
\end{align*}
where the $\p_k$ are the Newton power sums, see section \ref{sec:symfunctions}. It will be seen that if $\P$ is a determinantal point process for which the correlation kernel is the kernel of an orthogonal projection satisfying (\ref{cond:hilbertschmidt}), then $\rho^R$ and $\tilde{\rho}^R$ are uniform $L^1$-specializations. It will also be seen, see Proposition \ref{prop:analyticcont}, that the image by $\rho^R$ of the generating series of the elementary symmetric functions $E(u)$ (resp. of the complete homogeneous symmetric fuctions $H(u)$) is the random function $\Psi_X(-u)$ (resp. the random function $\Psi_X^{-1}(u)$) defined in Section \ref{sec:charpolintro} above, while the image by $\tilde{\rho}^R$ of the series $E(u)$ (resp. $H(u)$) is the function $\tilde{\Psi}_X(-u)$ (resp. $\tilde{\Psi}_X^{-1}(u)$). Theorem \ref{thm1} is then equivalent to the following 
\begin{thm}\label{thm2} If $\P$ is a rigid determinantal point process on $(E,\mu)$ induced by an orthogonal projection admitting an integrable kernel $K$ satisfying (\ref{cond:hilbertschmidt}), then the pairs $(\P,\rho^R)$ and $(\P,\tilde{\rho}^R)$ are Giambelli compatible.
\end{thm}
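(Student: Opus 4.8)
The plan is to derive Theorem~\ref{thm2} from Theorem~\ref{thm1} by means of the equivalence, valid for uniform $L^1$-specializations, between Giambelli compatibility \eqref{eq:giambelli0} and the determinantal generating-function identity \eqref{form:charpol}. The first step is to check the hypotheses of that equivalence, namely that $\rho^R$ and $\tilde{\rho}^R$ are indeed uniform $L^1$-specializations. This is exactly the point at which condition \eqref{cond:hilbertschmidt} is used: it controls the variances of the linear statistics $\sum_{x\in X}(x+iR)^{-k}$, so that, together with Proposition~\ref{prop:1}, the images $\rho^R(H(u))$ and $\rho^R(E(u))$ of the generating series converge in $L^1(\Conf(E),\P)$ and depend analytically on $u\in\C\setminus\R$; Proposition~\ref{prop:1}(ii)--(iii) provides precisely the integrability of the relevant products of ratios and their joint analyticity. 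For $\tilde{\rho}^R$ the power sum $\p_1$ is not centred and must be handled separately, but the same estimate applies.

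Given this, I would translate Giambelli compatibility into the language of the $\Psi$-functions. Applying the algebra homomorphism $\rho^R$ to the identity $E(-u)H(u)=1$ in $\Lambda[[u]]$ yields $\rho^R(E(-u))=\rho^R(H(u))^{-1}$, so that by Proposition~\ref{prop:analyticcont} one has $\Psi_X(u)=\rho^R(H(u))^{-1}$ and hence
\[
\frac{\Psi_X(z_i)}{\Psi_X(w_i)}=\frac{\rho^R(H(w_i))}{\rho^R(H(z_i))},
\]
and similarly for the tilde objects. The combinatorial core of the equivalence (Proposition~\ref{prop:propgiambelli}) rests on the two-variable generating function of the hook Schur functions,
\[
\sum_{p,q\ge 0}\s_{(p|q)}\,z^{p}w^{q}=\frac{H(z)E(w)-1}{z+w},
\]
which identifies the single-hook expectations $\E_\P[\rho^R(\s_{(p|q)})]$ with the Taylor coefficients of $(z+w)^{-1}\big(\E_\P[\Psi_X(-w)/\Psi_X(z)]-1\big)$. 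Expanding the left-hand side of \eqref{form:mainformula} over partitions produces the expectations $\E_\P[\rho^R(\s_\lambda)]$, while the Cauchy determinant evaluation of $\det(1/(z_i-w_j))$ turns the right-hand side into the Giambelli determinant $\det(\E_\P[\rho^R(\s_{(p_i|q_j)})])$; matching coefficients is then equivalent to \eqref{form:mainformula}. The same scheme with $\tilde{\rho}^R$ and \eqref{form:mainformulatilde} treats the tilde specialization.

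It remains only to invoke Theorem~\ref{thm1}, which asserts exactly the identities \eqref{form:mainformula} and \eqref{form:mainformulatilde}; by the equivalence of the previous paragraph this gives \eqref{eq:giambelli0} for $(\P,\rho^R)$ and $(\P,\tilde{\rho}^R)$, proving Giambelli compatibility. The main obstacle is not this final deduction but the verification underlying Proposition~\ref{prop:propgiambelli} together with the uniform $L^1$ property: one must justify the passage from the formal coefficient extraction above to an honest analytic identity in the variables $z_i,w_j$ off the real axis, keeping track of the additive centring built into $\rho^R$ and of the exceptional, non-centred role of $\p_1$ in $\tilde{\rho}^R$. Once the hook generating function, the Cauchy-determinant structure, and the $L^1$-convergence supplied by \eqref{cond:hilbertschmidt} and Proposition~\ref{prop:1} are aligned, the equivalence between the two formulations is forced.
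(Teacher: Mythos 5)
There is a genuine gap: your argument is circular within the logic of the paper. You propose to deduce Theorem~\ref{thm2} from Theorem~\ref{thm1} via the equivalence of Proposition~\ref{prop:propgiambelli} (together with Proposition~\ref{prop:analyticcont}), but that equivalence is a two-way street and the paper's actual direction of proof is the opposite one: Theorem~\ref{thm2} is proved first, and Theorem~\ref{thm1} is then obtained as a consequence of the equivalence (``Theorem~\ref{thm2} is proved and Theorem~\ref{thm1} follows''). Since you supply no independent proof of the determinantal identities \eqref{form:mainformula} and \eqref{form:mainformulatilde}, invoking Theorem~\ref{thm1} at the last step leaves the entire analytic content unproved. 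What you have correctly reconstructed is the translation machinery --- the hook generating function \eqref{form:usefull}, the Cauchy determinant, the uniform $L^1$ property of $\rho^R$ and $\tilde{\rho}^R$ supplied by \eqref{cond:hilbertschmidt} and Proposition~\ref{prop:1} --- but this only shows that the two theorems say the same thing, not that either of them is true.

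The missing substance is the following chain, which is where the hypotheses of rigidity and integrability of the kernel actually enter (your proposal never uses them). First, by Proposition~\ref{prop:bufetovconditional}, for a rigid determinantal process with an integrable projection kernel the conditional measures $\P(\cdot\,|X,T)$ on $[-T,T]$ are orthogonal polynomial ensembles with a fixed number of particles. Second, Proposition~\ref{prop:opegiambelli} shows that an $N$-orthogonal polynomial ensemble is Giambelli compatible for the uncentred specialization $\rho_0^R(\p_k)=\sum_{x\in X}(x+iR)^{-k}$; this is proved by an Andr\'eief-formula computation of $\E_{\P_N}[\rho_0^R(\s_\lambda)]$ combined with the generalized Giambelli formula of Proposition~\ref{prop:giambelligeneral}, and it is here that the specific algebraic structure is exploited. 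Third, Lemma~\ref{lem:mainlem} transfers Giambelli compatibility from the conditioned processes to $\P$ itself: one uses Proposition~\ref{prop:shift} to pass between $\rho_0^R$ and the centred $\rho^R$ (the shift by the deterministic constants $\E_\P\sum_x f_k(x)$ cancels on both sides of \eqref{eq:propgiambelli'}), then lets $T\to+\infty$ using the reverse martingale property of $\E_\P[\,\cdot\,|X,T]$ and the triviality of the tail sigma-algebra for projection-kernel determinantal processes. None of these three steps appears in your proposal, and without them neither Theorem~\ref{thm1} nor Theorem~\ref{thm2} is established.
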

Our argument relies on a proposition  from \cite{bufetovconditional} to the effect that determinantal point processes with integrable projection kernels conditioned to be equal to a specific fixed configuration outside a compact interval are orthogonal polynomial ensembles, cf. Proposition \ref{prop:bufetovconditional} below.

\begin{rem} A characterization of integrable projection kernels is provided by \cite{bufetovromanov} and \cite{bufetovquasisymmetries}. A kernel $K$ of orthogonal projection onto a space $L \subset L^2(E,d\mu)$ has an integrable form if and only if the space $L$ satisfies the following division property: if $f \in L$ and $p \in E$ is such that $f(p)=0$, then the function
\[x \mapsto \frac{f(x)}{x-p} \]
belongs to $L$.
\end{rem}
Examples of determinantal point processes with an integrable projection kernel include the sine process, the Airy process, the Bessel process (see \cite{bufetovrigid}), as well as determinantal point processes governed by reproducing kernels of de Branges spaces (see \cite{bufetovshirai}). In the discrete setting, the discrete sine process (\cite{boo}), the process with the Gamma kernel (\cite{bogamma}) and the process with the discrete Bessel kernel (\cite{borodinbessel}) also form examples.

See for example \cite{borodindet} for a survey on determinantal point processes.

Borodin, Olshanski and Strahov \cite{giambelli} (see also Borodin-Strahov \cite{borodinstrahov}) derive the determinantal property from Giambelli compatibility.
\begin{conj}Let $\P$ be a point process on $E$ such that $\rho^R$ is an $L^1$-specialization. The pair $(\P,\rho^R)$ is Giambelli compatible only if $\P$ is a determinantal point process.
\end{conj}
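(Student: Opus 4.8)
The plan is to adapt, to the present continuous setting, the reconstruction argument by which Borodin, Olshanski and Strahov derive the determinantal property from Giambelli compatibility. The first step is to pass from the hypothesis that $(\P,\rho^R)$ is Giambelli compatible to the Fyodorov--Strahov ratio identity. Indeed, for a uniform $L^1$-specialization such as $\rho^R$, Proposition \ref{prop:propgiambelli} gives the equivalence between \eqref{eq:giambelli0} and the identity \eqref{form:mainformula} for the averages $\E_\P[\prod_i \Psi_X(z_i)/\Psi_X(w_i)]$. So one may assume from the outset that \eqref{form:mainformula} holds for all $n$ and all pairwise distinct $z_i,w_i \in \C\setminus\R$, and the task becomes to recover from these averages a correlation kernel and to prove that the correlation functions of $\P$ are its minors.

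Second, I would read off a candidate kernel $K$ from the two-variable data. Through the classical identity expressing the generating function of the one-hook Schur functions $\s_{(p|q)}$ in terms of the series $E(u)$ and $H(u)$, the single-hook averages $\E_\P[\rho^R(\s_{(p|q)})]$ are precisely the Taylor coefficients of the entry
\[
\frac{1}{z-w}\,\E_\P\!\left[\frac{\Psi_X(z)}{\Psi_X(w)}\right]
\]
appearing on the right-hand side of \eqref{form:mainformula}. Thus this entry is the generating function of $K$, and I would define $K(x,y)$ by taking boundary values of it as $z\to x$ and $w\to y$ approach the real axis from the appropriate sides, exploiting that $\partial_z\log\Psi_X(z)$ is, up to a deterministic term, the Stieltjes transform of the random measure $\sum_{x\in X}\delta_x$, so that Sokhotski--Plemelj boundary values convert the $(z,w)$-data into densities on $E$.

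Third, I would propagate the determinantal structure from the averages to the correlation functions by recasting \eqref{form:mainformula} as a statement about multiplicative functionals. By the ratio identity $g_{z+iR}/g_{w+iR}=(z-x)/(w-x)$, the left-hand side of \eqref{form:mainformula} is the regularized multiplicative functional of the rational test functions $f_{z,w}(x)=(z-w)/(w-x)$, and \eqref{form:mainformula} expresses it as the ratio of the Giambelli determinant to the Cauchy determinant. The aim would be to show that this forces $\E_\P[\prod_{x\in X}(1+f(x))]=\det(1+Kf)$, a Fredholm determinant with $K$ the kernel of the second step, first for the rational functions $f_{z,w}$ and their finite combinations, and then, by density, for a class of test functions rich enough to determine the process; by the standard correspondence between such multiplicative-functional identities and determinantal correlation functions this would yield $\rho_n(x_1,\dots,x_n)=\det(K(x_i,x_j))_{i,j=1}^n$ and hence that $\P$ is determinantal.

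The hard part will be the last two steps, and this is precisely why the statement remains conjectural. One must control the boundary behaviour of the analytic averages uniformly enough to justify the limits $z,w\to$ real axis and to show that the determinantal structure survives them; one must verify that the rational functions $f_{z,w}$, as $z,w$ range over $\C\setminus\R$, together with their products, form a class large enough to determine the generating functional of $\P$, so that a Fredholm expansion on this class forces one for all admissible $f$; and one must check that the candidate $K$ is a bona fide correlation kernel producing consistent minors of every order, rather than merely reproducing the low-order data. In the absence of a general reconstruction theorem guaranteeing that the regularized ratio averages determine $\P$ and inherit its determinantal structure across all orders, these verifications cannot be carried out in the stated generality, which is the content of the conjecture.
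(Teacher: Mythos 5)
The statement you were asked about is labelled \texttt{conj} in the paper and is left entirely open: the authors give no proof, and the surrounding text only records that Borodin, Olshanski and Strahov derive the determinantal property from Giambelli compatibility in their (finite, discrete) setting. So there is no argument of the paper against which to measure yours. Your proposal, by your own admission in its final paragraph, is not a proof either: it is a plausible research plan whose two essential steps --- reconstructing a kernel $K$ from the boundary values of the two-variable average, and upgrading the identity \eqref{form:mainformula} to the full family of multiplicative-functional identities $\E_\P\bigl[\prod_{x\in X}(1+f(x))\bigr]=\det(1+Kf)$ --- are exactly the content of the conjecture and are left unestablished. A correct answer here would have been either a genuine proof (which would be a new result, not in the paper) or the observation that the statement is conjectural and unproven; presenting an outline that concedes its own incompleteness does not settle the statement.

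Two smaller points in case you pursue this direction. First, the generating function of the hook averages is $\bigl(\E_\P[\rho^R(H)(w)\rho^R(E)(-z)]-1\bigr)/(w-z)$, not the entry $\frac{1}{z-w}\E_\P[\Psi_X(z)/\Psi_X(w)]$ itself; the ``$-1$'' is only removed after the Cauchy-determinant manipulation in the proof of Proposition \ref{prop:propgiambelli}, so your identification of the kernel candidate needs that correction. Second, even granting a candidate $K$, the Giambelli identity \eqref{eq:giambelli0} constrains only the averages $\E_\P[\rho^R(\s_\lambda)]$, i.e.\ a countable family of moments of the regularized linear statistics; to conclude that the correlation functions of all orders are minors of $K$ you would need these moments to determine the process, which is a nontrivial moment problem that your density argument does not address.
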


For further background on the use of symmetric functions in the theory of point processes, see e.g. \cite{infinitewedge}, \cite{okounkovreshetikhin}, \cite{okounkovuses}, \cite{johansson}, \cite{borodinrains}, \cite{borodingorin}, \cite{bufetovgorin}, while for further background on characteristic polynomials of random matrices, one can see \cite{keatingsnaith}, \cite{fyodorovstrahov1}, \cite{fyodorovstrahov2}, \cite{baikdeiftstrahov}, or, more recently, \cite{chhaibinajnudel}, \cite{borodinstrahov}, \cite{giambelli}, \cite{bertolaruzza}, \cite{lazagchristoffel} for the discrete case.

\subsection{The scheme of the argument}
The proof of Theorems \ref{thm1} and \ref{thm2} proceeds as follows. We first develop the notion of uniform $L^1$-specialization for which the Giambelli compatibility is equivalent to the validity of the analogue of formula (\ref{form:charpol}), where the characteristic polynomials are replaced by the specialization of the generating series of the complete homogeneous and elementary symmetric functions, see Proposition \ref{prop:propgiambelli}.

We construct two families of uniform $L^1$-specializations for determinantal point processes with a specific additive structure, see Proposition \ref{prop:l1}, and establish a key lemma in this context, Lemma \ref{lem:mainlem} below. This Lemma allows to derive the Giambelli compatibility for the whole process from the Giambelli compatibility for the point process conditioned to be equal to a specific configuration outside a compact interval $[-T,T]$.

The Lemma is proved in the following way: the conditional expectations of the analogues of characteristic polynomials form a reverse martingale, and satisfy an analogue of formula (\ref{form:charpol}). By \cite{bufetov-qiu-shamov}, Osada-Osada \cite{osada-osada}, Lyons \cite{lyons}, the tail sigma-algebra of a determinantal point process induced by an orthogonal projection is trivial. The additive structure of the specializations leads to the Giambelli compatibility for the whole point process by passing to the limit as $T \rightarrow  + \infty$ in the analogue of formula (\ref{form:charpol}).

It is then easily seen that the specializations of interest, the specializations $\rho^R$ and $\tilde{\rho}^R$ introduced above, fit into this formalism. By Theorems 1.1, 1.4 from \cite{bufetovconditional}, we know that, if $\P$ is a determinantal point process on $E\subset \R$ with an integrable projection kernel satisfying (\ref{cond:hilbertschmidt}), then for $\P$-almost every $X \in \Conf(E)$ and for any $T>0$, the point process in $[-T,T] \cap E$ conditioned to be equal to $X$ outside $[-T,T]$ is an orthogonal polynomial ensemble. We adapt the argument from \cite{giambelli} in order to prove that orthogonal polynomial ensembles are Giambelli compatible for the specializations $\rho^R$ and $\tilde{\rho}^R$, see Proposition \ref{prop:opegiambelli}. We conclude the proof by Lemma \ref{lem:mainlem}.
\subsection{Organization of the paper}
The paper is organized as follows. In Section \ref{sec:ppcond}, we recall the necessary background on point processes and their conditional measures.\\
In Section \ref{sec:det}, we recall the definition of determinantal point processes, both via their additive and multiplicative functionals. We introduce the orthogonal polynomial ensembles and see them as a class of examples of determinantal point processes. We recall the definition of integrable kernels, and state results from \cite{bufetovconditional}  on conditional measures of determinantal point processes.\\
In Section \ref{sec:symfunctions}, following \cite{macdonald}, we recall  the Giambelli formula. The section ends with the statement of a generalized version of the Giambelli formula, together with the  formula \eqref{form:usefull} relating the generating series of the elementary and complete homogeneous symmetric functions to Schur functions indexed by hooks. \\
The  formula \eqref{form:usefull} is used in Section \ref{sec:giambelli1}, where we establish the equivalence of the Giambelli compatibility of a point process and the validity of formula (\ref{form:charpol}) for analogues of characteristic polynomials. We first introduce the notion of uniform $L^1$-specializations for which this equivalence takes place, see Proposition \ref{prop:propgiambelli}. We prove in Proposition \ref{prop:shift} that, for a uniform $L^1$-specialization, the Giambelli compatibility is invariant when one modifies the image of the Newton power sums by adding a constant.\\
We then construct in Section \ref{sec:giambelli2} uniform $L^1$-specializations associated to a sequence of functions for determinantal point processes satisfying condition (\ref{cond:hilbertschmidt}) and governed by an orthogonal projection, see Proposition \ref{prop:l1}. \\
In Section \ref{sec:giambelli3} we state and prove the key Lemma \ref{lem:mainlem}, by means of Propositions \ref{prop:propgiambelli}, \ref{prop:l1}, \ref{prop:shift}, and using the tail triviality.\\
In Section \ref{sec:proof1}, we introduce the specializations $\rho^R$ and $\tilde{\rho}^R$, and see them as particular examples of the above defined formalism. We derive Proposition \ref{prop:1} from the general Proposition \ref{prop:l1} and from Proposition \ref{prop:analyticcont}, saying that the random functions $\Psi_X(u)$ (resp. $\tilde{\Psi}_X$) are analytic continuations of the images by $\rho^R$ (resp. $\tilde{\rho}^R$) of the generating series of the complete homogeneous and elementary symmetric functions. The equivalence of Theorem \ref{thm1} and \ref{thm2} follows now from Proposition \ref{prop:propgiambelli}.\\
We prove in Section \ref{sec:proof2} that orthogonal polynomial ensembles are Giambelli compatible for the specializations $\rho^R$ and $\tilde{\rho}^R$, adapting the argument of Borodin-Olshanski-Strahov (\cite{giambelli}), see Proposition \ref{prop:opegiambelli}. \\
We conclude the proof of Theorems \ref{thm1} and \ref{thm2} in Section \ref{sec:proof3}, by means of Lemma \ref{lem:mainlem}, Propositions \ref{prop:bufetovconditional} and \ref{prop:opegiambelli}.
\subsection*{Acknowledgements} We are deeply grateful to Kristian Bjerkl\"ov, Alexey Klimenko and Yanqi Qiu for helpful discussions and comments. This project has received funding from the European Research Council (ERC) under the European Union's Horizon 2020 research and innovation programme under grant agreement No 647133 (I-CHAOS).
\section{Point processes and conditioning} \label{sec:ppcond}
Let $E$ be a subset of $\R$, either open or countable and without accumulation point. Recall that the space of configurations on $E$, denoted  $\Conf(E)$, is the set of all subsets of $E$ without accumulation points. For a finite set $A$, we write $\#(A)$ for the cardinality of $A$.  We equip the space $\Conf(E)$ with the sigma-algebra $\mathcal{F}_E$ generated by the maps 
\begin{align*}
\begin{matrix} \#_B : \hspace{0.1cm}& \Conf(E) &  \rightarrow & \N \\
& X & \mapsto & \#(X \cap B)
\end{matrix}
\hspace{0.1cm}, \quad \text{$B\subset E$, $B$  is a bounded Borel set.} 
\end{align*}
\begin{defi}\label{def:pp}A point process is a probability measure $\P$ on $(\Conf(E), \mathcal{F}_E)$.
\end{defi}
See also \cite{daley-verejones} for further background on point processes.

We use the notion of {\it rigidity} introduced by Ghosh and Peres. For a subset $W \subset E$,  we denote by $\mathcal{F}_W \subset \mathcal{F}_E$ the sigma-algebra generated by the maps $\#_B$, where $B \subset W$ is a bounded Borel set. We denote by $\mathcal{F}_W^\P$ the completion of $\mathcal{F}_W$ with respect to $\P$.
\begin{defi} [Ghosh-Peres, \cite{ghoshrigid}, \cite{ghoshperesrigid}] \label{def:rigid} A point process is said to be \emph{rigid} if, for any bounded set $B \subset E$, the random variable $\#_B$ is $\mathcal{F}_{B^c}^\P$-measurable.
\end{defi}
We introduce a natural filtration on $\Conf(E)$, and define conditional measures of $\P$. For $T>0$, we write $\mathcal{F}_T^\P:= \mathcal{F}_{[-T,T]^c \cap E}^\P$. The sequence $(\mathcal{F}_T^\P)_{T>0}$ is a reverse filtration, and we denote by $\mathcal{F}_\infty^\P$ the tail sigma-algebra 
\begin{align*}\mathcal{F}_\infty^\P = \cap_{T>0} \mathcal{F}_T^\P.
\end{align*}
For a subset $W \subset E$, we make the identification $\Conf(E) \simeq \Conf(W) \times \Conf(W^c)$. Let $\pi_W$ be the map 
\begin{align*}
\pi_W : \hspace{0.1cm}\Conf(E) &\rightarrow \Conf(W)\\
X &\mapsto X\cap W.
\end{align*}
For a point process $\P$ and $(\pi_W)_*(\P)$-almost every configuration $X \in \Conf(W)$, we define the measure $\P(\cdot | X, W)$ supported on $\{X\} \times \Conf(W^c)$ by the disintegration formula 
\begin{align*}
\P(\mathcal{B})=\int_{\Conf(W)} \P(\mathcal{B} |X,W) (\pi_W)_*(\P)(dX), \quad \mathcal{B}\subset \Conf(E) \text{ is a Borel set.}
\end{align*}
The measure $\P(\cdot | X,W)$ is the conditional measure of $\P$ on $W^c$ under the condition that the configuration coincide with $X$ in $W$. For $\P$-almost every configuration $X \in \Conf(E)$ we write
\[ \P(\cdot | X, W):=  \P(\cdot | X\cap W, W),
\]
and for $T>0$, we will write 
\begin{align*}
\P(\cdot |X, T):= \P(\cdot |X, [-T,T]^c).
\end{align*}
For an integrable random variable $F : \Conf(E) \rightarrow \C$, and $\P$-almost every configuration $X \in \Conf(E)$, we denote by 
\[\E_\P[ F | X,T] = \E_\P[F | \mathcal{F}_T^\P](X)
\]
its expectation with respect to $\P( \cdot |X,T)$. 

\section{Determinantal point processes} \label{sec:det}
We now define determinantal point processes, first via their additive functionals (\ref{def:det1}), and later with their multiplicative functionals (\ref{def:det2}). We refer to \cite{simontrace} for the notion of trace class operators and their Fredholm determinants. Let $\mu$ be a Radon measure on a subset $E$ of $\R$. We consider here the continuous case when the set $E$ is open and $\mu$ has no atoms, or the discrete one when the set $E$ is countable and has no accumulation point and $\mu$ is the counting measure on $E$.
\begin{defi} \label{def:det1}A point process $\P$ on $E$ is called a determinantal point process on $(E,\mu)$ if there exists a function 
\begin{align*}
K : \hspace{0.1cm} E \times E \rightarrow \C,
\end{align*}
called a correlation kernel for $\P$, such that for all $n \in \N$ and all bounded Borel compactly supported functions $f : E^n \rightarrow \C$, one has 
\begin{align*}
\E_\P \overset{*}{\sum} f(x_1,...x_n) = \int_{E^n}f(x_1,...,x_n)\det \left( K(x_i,x_j) \right)_{i,j=1}^n d\mu(x_1)...d\mu(x_n),
\end{align*}
where $\overset{*}{\sum}$ denotes the sum over all pairwise distinct points $x_1,...,x_n$ in the random configuration $X \in \Conf(E)$.
\end{defi}
We assume that the kernel $K$ induces a locally trace-class integral operator acting in  $L^2(E,d\mu)$ and for  which we keep the same letter $K$ 
\begin{align*}
K : L^2(E,d\mu) &\rightarrow L^2(E,d\mu) \\
f &\mapsto \left( x \mapsto \int_E f(y) K(x,y) d\mu(y) \right).
\end{align*}
For any bounded Borel set $B \subset E$, we have
\[\text{Tr}(\chi_B K \chi_B) = \int_B K(x,x)d\mu(x) \hspace{0.1cm}= \E_\P[\#_B].
\]  Conversely, by the Macchi-Soshnikov/Shirai-Takahashi Theorem  (\cite{macchi}, \cite{soshnikov}, \cite{shiraitakahashi}), a Hermitian locally trace class operator gives rise to a determinantal point process if and only if it is a positive contraction. In particular, any locally trace class orthogonal projection gives rise to a determinantal point process. An equivalent definition to \ref{def:det1} is the following 
\begin{defi} \label{def:det2}A point process $\P$ is a determinantal point process if there exists a kernel $K$ such that for any function $g : E \rightarrow \C$ such that $g-1$ is compactly supported, one has 
\begin{align*}
\E_\P \left[ \prod_{x \in X} g(x)\right] = \det \left(I + \chi_B K (g-1) \right),
\end{align*}
where $B \subset E$ is a bounded Borel set containing the support of $g-1$, and $\det$ denotes the Fredholm determinant in $L^2(E,d\mu)$ of the trace class operator $\chi_B K (g-1)$.
\end{defi}
A first class of examples of determinantal point processes we will be interested in is the one of orthogonal polynomial ensembles, which have a fixed number of particles.
\begin{defi}Let $N \in \N$ be a positive integer. An $N$-orthogonal polynomial ensemble is a probability measure $\P_N$ on $E^N$ defined by 
\begin{align*}
d\P_N(x_1,...,x_N)=C_N\prod_{1 \leq i<j \leq N}(x_i-x_j)^2 \prod_{i=1}^Nd\omega(x_i),
\end{align*}
where $\omega$ is a measure on $E$ with finite moments of all orders.
\end{defi}
Setting 
\begin{align*}
m_N:= \det \left( \int x^{i+j} d\omega(x) \right)_{i,j=0}^{N-1},
\end{align*}
for the normalizing constant we have
\begin{align*}
C_N^{-1}= N!m_N.
\end{align*}
Below we need a slightly more general
\begin{prop}\label{prop:normalizingconstant} Let $\omega$ be a measure on $E$ with finite moments of all orders. For $a \in \C$ and $N \in \N$, set :
\begin{align*}
m_N(a)= \det \left( \int_\R (x+a)^{i+j} d\omega(x) \right)_{i,j=0}^{N-1}.
\end{align*}
Then  for all $a \in \C$ we have
\begin{align*}
\int_{E^N} \prod_{1 \leq i <j \leq N} (x_i-x_j)^2 d\omega(x_1)\dots d\omega(x_N) = N! m_N(a).
\end{align*}
In particular, $m_N(a)=m_N$ does not depend on $a$.
\end{prop}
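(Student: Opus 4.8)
The plan is to reduce everything to the classical Heine--Andréief identity and to exploit the translation invariance of the Vandermonde determinant. First I would recall that for any functions $\varphi_0,\dots,\varphi_{N-1}$ and $\psi_0,\dots,\psi_{N-1}$ that are square-integrable against $\omega$, the Andréief (or Heine) identity reads
\begin{align*}
\int_{E^N} \det\bigl(\varphi_{i-1}(x_j)\bigr)_{i,j=1}^N \det\bigl(\psi_{i-1}(x_j)\bigr)_{i,j=1}^N \, d\omega(x_1)\cdots d\omega(x_N) = N! \, \det\left( \int_E \varphi_{i-1}(x)\psi_{j-1}(x)\, d\omega(x)\right)_{i,j=1}^N.
\end{align*}
Here the assumption that $\omega$ has finite moments of all orders guarantees, when the $\varphi_i,\psi_i$ are polynomials, that every entry of the right-hand matrix is finite and that the application of Fubini is legitimate.

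I would then apply this identity with $\varphi_i(x) = \psi_i(x) = (x+a)^i$. The key observation is that the corresponding determinant is a Vandermonde determinant in the shifted variables, and is therefore independent of $a$:
\begin{align*}
\det\bigl((x_j + a)^{i-1}\bigr)_{i,j=1}^N = \prod_{1 \le i < j \le N} \bigl((x_j + a) - (x_i + a)\bigr) = \prod_{1 \le i < j \le N}(x_j - x_i).
\end{align*}
Squaring this and substituting into the Andréief identity, the left-hand side becomes exactly $\int_{E^N}\prod_{1\le i<j\le N}(x_i - x_j)^2 \, d\omega(x_1)\cdots d\omega(x_N)$, which manifestly does not involve $a$, while the right-hand side equals $N!\det\bigl(\int_E (x+a)^{(i-1)+(j-1)} d\omega(x)\bigr)_{i,j=1}^N = N!\, m_N(a)$. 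This proves the displayed formula for every $a$ simultaneously, and hence yields $m_N(a) = m_N(0) = m_N$.

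It is worth noting that the $a$-independence of $m_N(a)$ can also be seen directly, which explains conceptually why the moment determinant does not feel the shift. Writing $(x+a)^{i+j} = (x+a)^i (x+a)^j$ and expanding $(x+a)^i = \sum_{k=0}^i \binom{i}{k} a^{i-k} x^k$, one collects the binomial coefficients into the lower-unitriangular matrix $U(a) = \bigl(\binom{i}{k} a^{i-k}\bigr)_{0 \le k \le i \le N-1}$ and obtains the factorization
\begin{align*}
\left(\int_E (x+a)^{i+j}\, d\omega(x)\right)_{i,j=0}^{N-1} = U(a)\, \left(\int_E x^{i+j}\, d\omega(x)\right)_{i,j=0}^{N-1}\, U(a)^{\top};
\end{align*}
since $\det U(a) = 1$, taking determinants gives $m_N(a) = m_N(0)$ with no further work.

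There is no serious obstacle here: the only points requiring care are the integrability bookkeeping, which is entirely handled by the finite-moment hypothesis on $\omega$, and the indexing in the Vandermonde and change-of-basis steps. I would present the Andréief route as the main argument, since it delivers the identity and the independence from $a$ at once, and record the unitriangular factorization as the reason the moment determinant is shift-insensitive.
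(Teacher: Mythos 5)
Your main argument is exactly the paper's proof: rewrite the squared Vandermonde in the shifted variables $(x_i+a)$, express it as the square of the determinant $\det\bigl((x_i+a)^{j-1}\bigr)$, and apply the Andr\'eief formula to obtain $N!\,m_N(a)$. Your closing observation that $\bigl(\int_E (x+a)^{i+j}d\omega\bigr) = U(a)\,\bigl(\int_E x^{i+j}d\omega\bigr)\,U(a)^{\top}$ with $U(a)$ unitriangular is correct and is worth keeping, since it supplies precisely the direct proof of the $a$-independence of $m_N(a)$ that the paper's subsequent remark asks for.
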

\begin{proof}
We clearly have 
\begin{multline*}
\int_{E^N} \prod_{1 \leq i <j \leq N} (x_i-x_j)^2 d\omega(x_1)\dots d\omega(x_N) \\
= \int_{E^N} \prod_{1 \leq i<j \leq N} (x_i +a -(x_j +a))^2 d\omega(x_1)\dots d\omega(x_N) \\
=\int_{E^N} \det \left( (x_i+a)^{j-1} \right)_{i,j=1}^N \det \left( (x_i+a)^{j-1} \right)_{i,j=1}^N d\omega(x_1)\dots d\omega(x_N).
\end{multline*}
The Andr\'eief Formula \cite{andreief} yields the equality
\begin{multline*}
\int_{E^N} \det \left( (x_i+a)^{j-1} \right)_{i,j=1}^N \det \left( (x_i+a)^{j-1} \right)_{i,j=1}^N d\omega(x_1)\dots d\omega(x_N) \\
= N! \det \left( \int_E (x+a)^{i+j} d\omega(x) \right)_{i,j=0}^{N-1}= N! m_N(a),
\end{multline*}
and the proof is complete. 
\end{proof}
\begin{rem}It would be interesting to have a direct proof of the fact that $m_N(a)$ does not depend on $a$.
\end{rem}
Since an orthogonal polynomial ensemble as defined above is a symmetric measure on $E^N$ which does not charge any $N$-tuple with equal coordinates, it is identified with a point process on $E$. This point process is determinantal (see e.g. \cite{konig} and references therein, or \cite{baikdeiftsuidan}) :
\begin{prop}An $N$-orthogonal polynomial ensemble is a determinantal point process on $(E,\omega)$. Its kernel is the projection kernel onto the space of polynomials of degree less than $N$.
\end{prop}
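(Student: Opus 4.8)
The plan is to verify Definition \ref{def:det1} directly, by computing the $n$-point correlation functions of $\P_N$ and identifying them with determinants of the claimed projection kernel. First I would introduce the orthonormal polynomials $p_0, p_1, \dots, p_{N-1}$ associated with $\omega$: since $\omega$ has finite moments of all orders, Gram--Schmidt applied to the monomials $1, x, x^2, \dots$ produces polynomials $p_k$ of exact degree $k$, with leading coefficient $\kappa_k \neq 0$, satisfying $\int_E p_k p_l \, d\omega = \delta_{kl}$. These lie in $L^2(E,\omega)$, and the kernel in question is the reproducing kernel
\[
K_N(x,y) = \sum_{k=0}^{N-1} p_k(x) p_k(y),
\]
which is a finite-rank (hence locally trace-class) orthogonal projection, namely the projection of $L^2(E,\omega)$ onto the space of polynomials of degree less than $N$.

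The next step rewrites the density. Using the Vandermonde identity $\prod_{1 \leq i<j \leq N}(x_i-x_j) = \det(x_i^{j-1})_{i,j=1}^N$ and performing elementary column operations to replace each monomial $x^{j-1}$ by $p_{j-1}(x)$ — the change of basis being upper triangular with diagonal $\kappa_0, \dots, \kappa_{N-1}$ — one obtains $\det(x_i^{j-1})_{i,j=1}^N = \bigl(\prod_{k=0}^{N-1}\kappa_k\bigr)^{-1}\det(p_{j-1}(x_i))_{i,j=1}^N$. Squaring and applying $\det A \det A^{\top} = \det(A A^{\top})$ to the matrix $A=(p_k(x_i))$ gives the key identity
\[
\prod_{1 \leq i<j \leq N}(x_i-x_j)^2 = \Bigl(\textstyle\prod_{k=0}^{N-1}\kappa_k\Bigr)^{-2}\det\bigl(K_N(x_i,x_j)\bigr)_{i,j=1}^N.
\]
Thus the density of $\P_N$ is proportional to $\det(K_N(x_i,x_j))_{i,j=1}^N \prod_i d\omega(x_i)$; the proportionality constant is pinned down below to be exactly $1/N!$ (equivalently, using Proposition \ref{prop:normalizingconstant} together with the Hankel determinant identity $m_N = \prod_{k=0}^{N-1}\kappa_k^{-2}$).

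The crux of the argument is the Gaudin--Mehta integration lemma. Because $K_N$ is a projection kernel, it satisfies the reproducing identity $\int_E K_N(x,y)K_N(y,z)\,d\omega(y) = K_N(x,z)$ and $\int_E K_N(x,x)\,d\omega(x) = N$, and from these two facts one deduces, for every $n \leq N$,
\[
\int_E \det\bigl(K_N(x_i,x_j)\bigr)_{i,j=1}^n\, d\omega(x_n) = (N-n+1)\,\det\bigl(K_N(x_i,x_j)\bigr)_{i,j=1}^{n-1}.
\]
I would prove this by expanding the determinant as a sum over permutations of $\{1,\dots,n\}$ and splitting according to whether $n$ is a fixed point of $\sigma$ or lies in a longer cycle: the reproducing property collapses the cycle passing through $n$, while the trace $N$ accounts for the fixed-point contribution. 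This step is the only genuinely computational point and is the main obstacle; everything else is formal. In particular, iterating the lemma from $n=N$ down shows $\int_{E^N}\det(K_N(x_i,x_j))_{i,j=1}^N \prod_i d\omega(x_i) = N!$, which fixes the normalizing constant at $1/N!$.

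Finally, iterating the lemma to integrate out the variables $x_{n+1},\dots,x_N$ produces a factor $(N-n)!$, so the $n$-point correlation function of $\P_N$ is
\[
\rho_n(x_1,\dots,x_n) = \frac{N!}{(N-n)!}\cdot\frac{1}{N!}\cdot (N-n)!\,\det\bigl(K_N(x_i,x_j)\bigr)_{i,j=1}^n = \det\bigl(K_N(x_i,x_j)\bigr)_{i,j=1}^n.
\]
Since $\E_\P \overset{*}{\sum} f(x_1,\dots,x_n) = \int_{E^n} f\,\rho_n\, d\omega^{\otimes n}$, this is exactly the defining identity of Definition \ref{def:det1} with $\mu = \omega$ and correlation kernel $K_N$. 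Hence $\P_N$ is a determinantal point process on $(E,\omega)$ whose kernel is the projection onto the polynomials of degree less than $N$, completing the proof.
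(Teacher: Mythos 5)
Your proof is correct and is the standard Gaudin--Mehta argument: the paper itself gives no proof of this proposition, citing instead \cite{konig} and \cite{baikdeiftsuidan}, where essentially the argument you outline (Vandermonde rewritten via orthonormal polynomials, the reproducing-kernel integration lemma, and the resulting collapse of the correlation functions) appears. The only implicit hypothesis worth flagging is that the support of $\omega$ contains at least $N$ points, equivalently $m_N \neq 0$, so that Gram--Schmidt produces $N$ orthonormal polynomials; this is already required for the normalizing constant $C_N$ of the ensemble to exist.
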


Recall statements 2 from Theorems 1.1 and 1.4 in \cite{bufetovconditional}
\begin{prop} \label{prop:bufetovconditional}
Let $\P$ be a determinantal point process governed by an orthogonal projection, and such that its correlation kernel has an integrable form and satisfy the growth condition (\ref{cond:hilbertschmidt}). Assume that $\P$ is rigid in the sense of Definition \ref{def:rigid}, and let $T>0$. Then, for $\P$-almost every $X \in \Conf(E)$, the point process $\P(\cdot | X, T)$ is an orthogonal polynomial ensemble.
\end{prop}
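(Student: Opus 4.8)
The plan is to follow the argument of \cite{bufetovconditional}, whose three ingredients are the rigidity of $\P$, the reproducing-kernel structure of the orthogonal projection $K$, and the division property equivalent to the integrable form (recalled in the Remark above, with $L\subset L^2(E,d\mu)$ denoting the range of $K$). First I would exploit rigidity to reduce to a finite particle count: since $\P$ is rigid, the number of points of $X$ in $[-T,T]$ is $\mathcal{F}_T^\P$-measurable, so for $\P$-almost every $X$ the conditional measure $\P(\cdot\,|\,X,T)$ is supported on configurations inside $[-T,T]$ having a fixed number $N=N(X,T)$ of points. It therefore suffices to show that these $N$ points are distributed according to a measure of the form $C\prod_{1\le i<j\le N}(x_i-x_j)^2\prod_i d\omega(x_i)$ for some measure $\omega$ on $[-T,T]$ with finite moments.

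Next I would identify the conditional process as a projection determinantal process and describe its governing subspace. For a determinantal process governed by an orthogonal projection, conditioning on the configuration in $[-T,T]^c$ again yields a determinantal process governed by a projection onto a subspace of $L$; this is the content of the conditional-measure theory for projection kernels \cite{bufetov-qiu-shamov}. Writing $q=\{q_k\}$ for the points of $X$ outside $[-T,T]$ and using that $K(\cdot,y)$ is the reproducing kernel of $L$, so that $\langle f,K(\cdot,y)\rangle=f(y)$ for $f\in L$, the governing subspace is built from
\[
L^q=\{f\in L:\ f(q_k)=0\ \text{for all}\ k\},
\]
the orthogonal complement in $L$ of the span of the reproducing-kernel elements $K(\cdot,q_k)$.

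The crux is then to make $L^q$ explicit over $[-T,T]$ by means of the division property. If $f\in L$ vanishes at a point $p$, then $x\mapsto f(x)/(x-p)$ again lies in $L$; iterating over the conditioning points relevant after restriction to the window, a function vanishing at $q_1,\dots,q_m$ factors as $f(x)=\big(\prod_{k=1}^m(x-q_k)\big)h(x)$ with $h\in L$. Hence $L^q$, viewed over $[-T,T]$, is the product of the fixed weight factor $\prod_k(x-q_k)$ with an $N$-dimensional space playing the role of polynomials of degree $<N$. A determinantal process governed by the projection onto such a space of degree-$<N$ polynomials times a common weight has joint density proportional to $\prod_{i<j}(x_i-x_j)^2\prod_i d\omega(x_i)$, with $\omega$ the weight $\big|\prod_k(x-q_k)\big|^2 d\mu$ restricted to $[-T,T]$; this is precisely the orthogonal polynomial ensemble property, completing the reduction begun in the first step.

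The main obstacle is that $L$ is infinite-dimensional and the outside configuration $q$ contains infinitely many points, so that neither $L^q$ nor the product $\prod_k(x-q_k)$ is literally defined. This is exactly where the growth condition \eqref{cond:hilbertschmidt} is indispensable: it furnishes the convergence needed to pass from the finite-window, finitely-many-conditioning-points computation to the genuine conditional measure, and it forms the technical heart of \cite{bufetovconditional}. Once the conditional subspace over $[-T,T]$ is identified as degree-$<N$ polynomials times a common weight, the Vandermonde-squared form of the density, and hence the orthogonal polynomial ensemble property, follows as in the finite-dimensional case.
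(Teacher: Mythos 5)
There is an important mismatch of expectations here: the paper does not prove Proposition \ref{prop:bufetovconditional} at all. It is quoted verbatim as ``statements 2 from Theorems 1.1 and 1.4 in \cite{bufetovconditional}'' and used as a black box, so there is no in-paper argument to compare yours against. Judged as a reconstruction of the proof in the cited reference, your outline identifies the right three ingredients and assembles them in the right order: rigidity fixes the particle number $N$ in $[-T,T]$ conditionally on the outside configuration; the conditional process of a projection-governed determinantal process is again governed by a projection, onto the restriction to the window of the subspace of functions of $L$ vanishing at the outside particles; and the division property characterizing integrable kernels lets you factor such functions as $\prod_k(x-q_k)$ times an element of $L$, so that the conditional subspace is a common weight times an $N$-dimensional polynomial-like space, whence the Vandermonde-squared density.

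The genuine gap is the one you yourself flag in the last paragraph and then wave away: the entire analytic content of \cite{bufetovconditional} is precisely the passage from finitely many conditioning points to the full infinite configuration $q$. The product $\prod_k(x-q_k)$ diverges and must be replaced by a regularized weight (this is where the growth condition \eqref{cond:hilbertschmidt} enters quantitatively, not just as a slogan); one must prove that the limiting conditional subspace really is $N$-dimensional and coincides with (regularized weight)$\cdot\mathrm{Pol}_{<N}$ rather than something smaller or larger; and one must justify that the conditional measure computed through exhausting finite subconfigurations of $q$ converges to the true disintegration $\P(\cdot\,|\,X,T)$. None of these steps is carried out, so what you have is a correct plan, not a proof. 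Since the paper itself treats the statement as imported, the honest conclusion is that your proposal reproduces the skeleton of the external argument while deferring its technical heart, and cannot be accepted as a self-contained proof of the proposition.
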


\section{The Giambelli Formula } \label{sec:symfunctions}
 
\subsection{Power sums, complete homogeneous functions and elementary symmetric functions}
Here we  recall  from   Macdonald \cite{macdonald} the properties of symmetric functions that we will use in Section \ref{sec:giambelli}.
Let $\Lambda$ be the algebra over $\C$ of symmetric functions in the formal variables $\x_1,\x_2,\dots$, see \cite{macdonald}. A distinguished basis in $\Lambda$ is provided by the Newton power sums 
\begin{align*}
\p_k(\x_1,\x_2,\dots)= \sum_{i=1}^{+ \infty} \x_i^k, \quad k=1,2,\dots.
\end{align*}
The elementary symmetric functions $\{\e_k\}_{k \geq 0}$ and the complete homogeneous function $\{\h_k\}_{k \geq 0}$ are defined as follows 
\begin{align*}
\e_k = \sum_{i_1 <\dots <i_k} \x_{i_1} \dots \x_{i_k}, \quad \h_k = \sum_{i_1 \leq \dots \leq i_k} \x_{i_1} \dots \x_{i_k}, \quad k=0,1,\dots 
\end{align*}
In particular, $\h_0=\e_0 =1$, and we agree that 
\begin{align*}
\h_{-1}=\h_{-2}= \dots =\e_{-1}= \e_{-2}=\dots = 0. 
\end{align*}
The following identities hold for the generating series of the three above-defined families of symmetric functions 
\begin{align}\label{formula:genseriesH}
H(\uuu)&:=\sum_{k=0}^{+\infty}\h_k \uuu^k = \exp \left( \sum_{k=1}^{+\infty} \p_k\frac{\uuu^k}{k} \right),\\ \label{formula:genseriesE}
E(\uuu)&:= \sum_{k=0}^{+\infty} \e_k \uuu^k = \left(H(-\uuu)\right)^{-1},
\end{align}
and also 
\begin{align} \label{formula:genseriesHE}
H(\uuu)=\prod_{i=1}^{+\infty}(1-\uuu\x_i)^{-1}, \quad E(\uuu)= \prod_{i=1}^{+ \infty}(1+ \uuu\x_i).
\end{align}
\subsection{Young diagrams, Schur functions and the Giambelli formula}
Another basis in $\Lambda$ is formed by the Schur functions, which are indexed by Young diagrams. A Young diagram $\lambda=(\lambda_1 \geq \lambda_2 \geq \dots)$ is an almost zero non-increasing sequence of non-negative integers. The set of all Young diagrams is denoted by $\Y$, and for $\lambda \in \Y$, we let $l(\lambda)$ be the index of its last non-zero entry, and we write  
\begin{align*}
|\lambda |= \sum_{i \geq 1} \lambda_i.
\end{align*}
The transposed diagram $\lambda'$ of a Young diagram $\lambda \in \Y$ is defined by 
\begin{align*}
\lambda_i'= |\{j, \hspace{0.1cm} \lambda_j \geq i \} |,\quad i=1,2,\dots.
\end{align*}
A Young diagram can also be written using the Frobenius notation. For $\lambda= (\lambda_1 \geq \lambda_2,\dots) \in \Y$, let $d$ be the number of diagonal boxes in $\lambda$, i.e. $d=\max \{i =1,2,\dots,\hspace{0.1cm} \lambda_i \geq i \}$, and for $i=1,\dots, d$, the Frobenius coordinates $p_i,q_i$ are defined by 
\begin{align*}
p_i = \lambda_i-i, \quad q_i=\lambda_i'-i.
\end{align*}
The Frobenius notation for $\lambda$ is then $\lambda = (p_1,\dots,p_d | q_1,\dots,q_d)$. In particular, for $p,q  \in \N$, $(p|q)$ is the Frobenius notation for the Young diagram $(p+1,1^q)$.\\
\par
The Schur functions can be defined by the Jacobi-Trudi formula
\begin{align*}
\s_\lambda = \det \left(\h_{\lambda_i-i+j} \right)_{i,j=1}^n = \det \left( \e_{\lambda_i'-i+j} \right)_{i,j=1}^n,
\end{align*}
valid under the assumption $n \geq l(\lambda)$ for the first equality, $n \geq l(\lambda^{\prime})$ for the second. In particular, we have 
\begin{align*}
\s_{(r)}=\h_r, \quad \s_{(1^r)}=\e_r.
\end{align*}
Equivalently, Schur functions can be defined as the projective limits in $\Lambda$ of the Schur polynomials 
\begin{align} \label{def:schurpol}
\s_\lambda(\x_1,\dots,\x_n) = \frac{\det ( \x_i^{\lambda_j-j+n})_{i,j=1}^n}{\prod_{1\leq i <j \leq n}(\x_i-\x_j)},
\end{align}
where $n \geq l(\lambda)$. The Schur functions satisfy the Giambelli formula (\cite{macdonald}, I.3 ex.9): if, in the Frobenius notation, $\lambda=(p_1,\dots,p_d |q_1,\dots, q_d)$ then 
\begin{align} \label{form:giambelli}
\s_\lambda = \det (\s_{(p_i|q_j)} )_{i,j=1}^d.
\end{align}
A more general statement holds, see \cite{macdonald}, ch. I.3 ex. 21.
\begin{prop} \label{prop:giambelligeneral} Let $\{ \h_{r,s}\}_{r \in \Z, \hspace{0.1cm} s \geq 0}$ be a collection of commuting indeterminates such that 
\begin{align*}
\h_{0,s}=1 \hspace{0.1cm} \text{for all } s \geq 0; \quad \h_{r,s} = 0 \hspace{0.1cm} \text{for all } r <0 , s \geq 0.
\end{align*}
For $\lambda \in \Y$ and $N \geq l(\lambda)$, define 
\begin{align*}
\tilde{\s}_{\lambda} = \det ( \h_{\lambda_i-i+j, j-1} )_{i,j =1}^N.
\end{align*}
We then have, if $\lambda = (p_1, \dots, p_d | q_1, \dots , q_d)$, 
\begin{align*}
\tilde{\s}_\lambda = \det (  \tilde{\s}_{(p_i|q_j)} )_{i,j=1}^d. 
\end{align*}
\end{prop}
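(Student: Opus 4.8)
The plan is to realize every determinant in the statement as a maximal minor of one fixed matrix, and then to reduce the identity to a classical Plücker-type (multilinear) minor computation, the only genuine difficulty being the bookkeeping of signs.

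First I would introduce the matrix $\mathcal{G}$ with rows indexed by $a\in\Z$ and columns by $j\in\{1,2,\dots\}$, defined by $\mathcal{G}_{a,j}=\h_{a+j,\,j-1}$. Because the second index depends only on the column, for every $\lambda\in\Y$, setting $a_i:=\lambda_i-i$, we have $\tilde{\s}_\lambda=\det(\mathcal{G}_{a_i,j})_{i,j=1}^N$; that is, $\tilde{\s}_\lambda$ is the maximal minor of $\mathcal{G}$ on the row set $S_\lambda=\{a_1>\cdots>a_N\}$ (taken in decreasing order) and the column set $\{1,\dots,N\}$, which is common to all $\lambda$. The hypotheses $\h_{0,s}=1$ and $\h_{r,s}=0$ for $r<0$ make the baseline matrix $A:=(\mathcal{G}_{-i,j})_{i,j=1}^N$, corresponding to the empty diagram, upper unitriangular, so that $\det A=1$; the same triangularity shows $\tilde{\s}_\lambda$ is independent of $N$ for $N\ge l(\lambda)$, so I may fix one large $N$ serving $\lambda$ and all hooks $(p_i|q_j)$ at once.

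Next I would record the Frobenius/Maya-diagram dictionary between row sets and diagonal data. Writing $\lambda=(p_1,\dots,p_d|q_1,\dots,q_d)$, the set $S_\lambda$ is obtained from the baseline $\{-1,-2,\dots,-N\}$ by adjoining the $d$ ``particles'' $p_1>\cdots>p_d\ge 0$ and deleting the $d$ ``holes'' $-q_1-1,\dots,-q_d-1$. For a single hook $(p_i|q_j)=(p_i+1,1^{q_j})$ one checks directly that $S_{(p_i|q_j)}$ is the baseline with the single particle $p_i$ adjoined and the single hole $-q_j-1$ deleted. Thus $\tilde{\s}_\lambda$ is the $d$-fold row exchange of $A$, while each $\tilde{\s}_{(p_i|q_j)}$ is a single row exchange. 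The algebraic heart is then a multilinearity argument made possible by $\det A=1$: since $A$ is unitriangular each particle row may be written uniquely, with polynomial coefficients, as $\mathcal{G}_{p_i}=\sum_{k=1}^N c_k(p_i)\,\mathcal{G}_{-k}$. Replacing the single baseline row $\mathcal{G}_{-(q_j+1)}$ by $\mathcal{G}_{p_i}$ and comparing with the sorted order of $S_{(p_i|q_j)}$ gives $\tilde{\s}_{(p_i|q_j)}=(-1)^{q_j}c_{q_j+1}(p_i)$. Substituting the expansions into the $d$-fold exchange, and using multilinearity together with the vanishing of determinants with repeated rows, collapses $\tilde{\s}_\lambda$ to $\Delta\cdot\det\big(c_{q_j+1}(p_i)\big)_{i,j=1}^d$, where $\Delta=\pm1$ is the sign of the permutation that restores the baseline rows to increasing order; a short inversion count yields $\Delta=(-1)^{\sum_j q_j}$.

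Combining these gives $\tilde{\s}_\lambda=(-1)^{\sum_j q_j}\det\big(c_{q_j+1}(p_i)\big)=(-1)^{\sum_j q_j}(-1)^{\sum_j q_j}\det\big(\tilde{\s}_{(p_i|q_j)}\big)=\det\big(\tilde{\s}_{(p_i|q_j)}\big)$, the two sign factors cancelling. I expect the main obstacle to be precisely this sign bookkeeping: one must match the decreasing-order convention built into the definition of $\tilde{\s}_\lambda$ with the reordering produced by the multilinear expansion, and verify that the factor $(-1)^{\sum_j q_j}$ coming from the hooks exactly annihilates the factor $(-1)^{\sum_j q_j}$ coming from reordering the baseline rows. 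Everything else is formal; in particular the extra index $s=j-1$ never enters the combinatorics of rows and columns, which is why the classical one-parameter sign computation underlying Giambelli's formula carries over verbatim to the present two-parameter family $\{\h_{r,s}\}$.
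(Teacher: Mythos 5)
Your argument is correct, and it is worth noting that the paper does not actually prove Proposition \ref{prop:giambelligeneral}: the statement is quoted from Macdonald, I.3, ex.~21, without proof, so there is no in-paper argument to compare against. Your proof is a valid, self-contained one along the standard lines for Giambelli-type identities: realize $\tilde{\s}_\lambda$ and all the $\tilde{\s}_{(p_i|q_j)}$ as maximal minors of a single matrix $\mathcal{G}_{a,j}=\h_{a+j,j-1}$ on a common column set, use the unitriangularity of the baseline minor (forced by $\h_{0,s}=1$ and $\h_{r,s}=0$ for $r<0$, which also gives the independence of $N$) to expand each particle row over the baseline rows, and collapse everything by multilinearity to $\Delta\cdot\det\bigl(c_{q_j+1}(p_i)\bigr)$. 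The sign bookkeeping checks out: the inversion count $\binom{d}{2}+\sum_j\bigl(q_j-(d-j)\bigr)=\sum_j q_j$ gives $\Delta=(-1)^{\sum_j q_j}$, and the normalization $\tilde{\s}_{(p|q)}=(-1)^{q}c_{q+1}(p)$ is confirmed by direct computation on, say, $(0|0)$ and $(0|1)$, so the two factors $(-1)^{\sum_j q_j}$ do cancel. The one observation that genuinely matters, and which you make explicitly, is that the second subscript $s=j-1$ depends only on the column, so every determinant in sight is a minor of the same matrix with the same columns and the classical one-parameter argument transfers verbatim. By contrast, the paper's Remark \ref{rem:formusefull} derives the ordinary Giambelli formula from the relation $\s_{(p|q)}+\s_{(p+1|q-1)}=\h_{p+1}\e_q$, a route that has no evident analogue for the two-parameter family $\{\h_{r,s}\}$ (there is no generalized $\e_q$); your minor-based argument is therefore the right tool here and supplies the proof the paper delegates to Macdonald.
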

Another useful formula is the following (\cite{macdonald}, I.3, ex. 14) :
\begin{equation} \label{form:usefull}
H(\uuu)E(\vvv) = 1 + (\uuu+\vvv) \sum_{p,q=0}^{+\infty} \s_{(p|q)} \uuu^p \vvv^q.
\end{equation}
\begin{rem}\label{rem:formusefull}Both identities (\ref{form:giambelli}) and (\ref{form:usefull}) follow from the fact that 
\begin{equation*}
\s_{(p|q)} + \s_{(p+1|q-1)} = \h_{p+1} \e_q,
\end{equation*}
which itself is a direct consequence of Jacobi-Trudi formula.
\end{rem}
\section{Specializations of the algebra of symmetric functions and Giambelli compatible point processes} \label{sec:giambelli}
We introduce in Section \ref{sec:giambelli1} the notion of uniform $L^1$-specializations of the algebra $\Lambda$ for a given point process, and prove that the Giambelli compatiblity is equivalent to the validity of formula (\ref{form:charpol}), where the product 
\begin{align*}
\rho(E(z))\rho(H(w)),
\end{align*}
plays the r\^ole of the ratio $P(z)/P(w)$ of characteristic polynomials,see Proposition \ref{prop:propgiambelli}. In Section \ref{sec:giambelli2}, we construct two classes of uniform $L^1$-specializations for determinantal point processes. We finally prove in Section \ref{sec:giambelli3} that the Giambelli compatibility of a determinantal point process with a uniform $L^1$-specialization follows from the Giambelli compatibility of the conditioned point processes.
\subsection{Uniform $L^1$-specializations} \label{sec:giambelli1}
\begin{defi}Let $\rho : \Lambda \rightarrow L^1(\Conf(\R), \P)$ be a specialization, and let $R>0$. We say that the the specialization $\rho$ is a \emph{$R$-uniform $L^1$-specialization} if for $\P$-almost every configuration $X \in \Conf(E)$, the radius of convergence of the series 
\begin{align} \label{eq:defS}
S_X^\rho(u):=\sum_{k \geq 1} \frac{\rho(\p_k)(X)}{k}u^k
\end{align}
is at least $R$ and if furthermore we have
\begin{align} \label{cond:unifl1} \exp\left(\sum_{k=1}^nS_X^\rho(w_k)-S_X^\rho(z_k)\right) \in L^1(\Conf(E), \P),
\end{align}
for all $n \in \N$ and all $z_1,\dots,z_n,w_1,\dots, w_n \in D(0,R)$ where $D(0,R)$ is the open disk centered at the origin and of radius $R$: $D(0,R)=\{z \in \C, \hspace{0.1cm} |z| <R\}$.
\end{defi}
\begin{rem}The requirement that the radius of convergence of the series $S_X^\rho(u)$ is at least $R>0$ independently of the configuration $X \in \Conf(E)$ is stronger than that the series $S_X^\rho(u)$ have a positive radius of convergence for $\P$-almost every $X \in \Conf(E)$.
\end{rem}
Observe that, if $\rho$ is a $R$-uniform $L^1$-specialization, formulas (\ref{formula:genseriesH}) and (\ref{formula:genseriesE}) specialize into
\begin{align}
\exp \left( S_X^\rho(w) \right) &= \sum_{k =0}^{+ \infty} \rho(\h_k)(X) w^k = \rho(H)(w), \\
\exp \left( -S_X^\rho(z) \right) &= \sum_{k=0}^{+\infty} (-1)^k\rho(\e_k)(X) z^k = \rho(E)(-z),
\end{align}
the series having radius of convergence at least $R>0$. We also have from (\ref{form:usefull}) that 
\begin{align} \label{form:usefullspec}
\exp \left( S_X^\rho(w)-S_X^\rho(z) \right) = \rho(H)(w)\rho(E)(-z) = 1 + (w-z)\sum_{p,q=0}^{+\infty} (-1)^q \rho( \s_{(p|q)})(X) w^p z^q,
\end{align}
for all $z,w \in D(0, R)$. These observations and especially the latter formula (\ref{form:usefullspec}) lead to the following Proposition, analogous to Proposition 2.2. in \cite{giambelli}.
\begin{prop} \label{prop:propgiambelli} Let $\rho$ be a $R$-uniform $L^1$-specialization. Then $\rho$ is an $L^1$-specialization and $(\P,\rho)$ is Giambelli compatible if and only if, for all $n \in \N$ and all pairwise distinct $z_1,\dots,z_n,w_1,\dots,w_n \in D(0,R)$, we have 
\begin{multline} \label{eq:propgiambelli'}
\E_\P \left[ \exp \left(\sum_{i=1}^n S_X^\rho(w_i)-S_X^\rho(z_i)\right) \right]\\
 = \det \left( \frac{1}{z_i-w_j} \right)_{1 \leq i,j\leq n}^{-1} \det \left( \frac{1}{z_i-w_j}\E_\P \left[ \exp (S_X^\rho(w_i)-S_X^\rho(z_j) \right] \right)_{i,j=1}^n.
\end{multline}
\end{prop}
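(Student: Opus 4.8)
The plan is to exploit the key identity \eqref{form:usefullspec}, which expresses the product $\rho(H)(w)\rho(E)(-z)$ as a generating series in the variables $w,z$ whose coefficients are the specialized hook Schur functions $(-1)^q\rho(\s_{(p|q)})(X)$. The strategy is to translate both the Giambelli compatibility condition \eqref{eq:giambelli0} and the determinantal identity \eqref{eq:propgiambelli'} into statements about these coefficients, and to show the two translations coincide. First I would observe that, since $\rho$ is assumed to be a $R$-uniform $L^1$-specialization, condition \eqref{cond:unifl1} guarantees that the left-hand side of \eqref{eq:propgiambelli'} is a well-defined finite quantity, and moreover that each single-pair exponential $\exp(S_X^\rho(w_i)-S_X^\rho(z_j))$ lies in $L^1$, so every matrix entry on the right-hand side is finite. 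Taking $n=1$ in \eqref{form:usefullspec} and applying $\E_\P$ shows that
\begin{align*}
\E_\P\left[\exp(S_X^\rho(w)-S_X^\rho(z))\right] = 1 + (w-z)\sum_{p,q=0}^{+\infty}(-1)^q\,\E_\P[\rho(\s_{(p|q)})]\,w^p z^q,
\end{align*}
which identifies the single-pair averages on the right-hand side of \eqref{eq:propgiambelli'} as generating functions of the averaged hook Schur functions $\E_\P[\rho(\s_{(p|q)})]$.

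The main step is then a purely algebraic manipulation of determinants of generating series, following Proposition 2.2 of \cite{giambelli}. I would multiply the identity \eqref{eq:propgiambelli'} through by $\det(1/(z_i-w_j))$ and expand both sides as power series in $z_1,\dots,z_n,w_1,\dots,w_n$. On the left, applying \eqref{form:usefullspec} for each index $i$ and taking the expectation, the coefficient of a fixed monomial $\prod_i w_i^{p_i} z_i^{q_i}$ is, up to the sign $(-1)^{\sum q_i}$ and the antisymmetrizing factor coming from $\det(1/(z_i-w_j))^{-1}$, precisely $\E_\P[\rho(\s_\lambda)]$ for the partition $\lambda$ with Frobenius coordinates $(p_1,\dots,p_d\mid q_1,\dots,q_d)$. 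On the right, the same extraction produces $\det(\E_\P[\rho(\s_{(p_i\mid q_j)})])_{i,j=1}^d$, because the Cauchy-type kernel $\frac{1}{z_i-w_j}$ is exactly the combinatorial device that converts a product of single-hook generating functions into the Giambelli determinant. Matching coefficients of the two expansions then yields the equivalence \eqref{eq:giambelli0} $\Leftrightarrow$ \eqref{eq:propgiambelli'}. The fact that $\rho$ is an $L^1$-specialization, i.e. that each $\rho(\s_\lambda)\in L^1$, follows from the Jacobi--Trudi/Giambelli expression of $\s_\lambda$ together with the integrability in \eqref{cond:unifl1}, obtained by reading off a single Taylor coefficient of an $L^1$-bounded analytic family.

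I expect the main obstacle to be the bookkeeping in the coefficient extraction, namely making rigorous the passage between the analytic identity of generating series and the coefficient-by-coefficient identity of expectations. Concretely, one must justify interchanging $\E_\P$ with the infinite sums and with the extraction of Taylor coefficients; this is where the \emph{uniform} nature of the specialization is essential, since it provides an $L^1$-dominating control uniform in the complex parameters over the polydisk $D(0,R)^{2n}$, allowing differentiation under the expectation and term-by-term integration via dominated convergence. A secondary subtlety is correctly tracking the signs $(-1)^q$ and the antisymmetry of the Frobenius labelling: one must verify that extracting the coefficient of $\prod_i w_i^{p_i}z_i^{q_i}$ with strictly decreasing $p$'s and $q$'s on both sides picks out the same determinantal object, and that the contributions with repeated or unordered labels cancel against the Vandermonde-type factors. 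Once these points are settled, the equivalence is a formal consequence of \eqref{form:usefullspec} and the Cauchy determinant formula, exactly as in \cite{giambelli}.
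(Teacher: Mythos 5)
Your plan follows essentially the same route as the paper's proof (both adapted from Proposition 2.2 of \cite{giambelli}): the key input is identity \eqref{form:usefullspec}, the argument is coefficient extraction matching $\E_\P[\rho(\s_\lambda)]$ against $\det(\E_\P[\rho(\s_{(p_i|q_j)})])$, and the $L^1$-specialization claim is obtained by reading off Taylor coefficients (Cauchy's integral formula) from the integrable generating series. The one step your plan elides is that the entries $\frac{\rho(H)(w_j)\rho(E)(-z_i)}{z_i-w_j}$ are \emph{not} power series (they have poles along $z_i=w_j$), whereas $\frac{\rho(H)(w_j)\rho(E)(-z_i)-1}{z_i-w_j}$ is; the paper therefore first proves the determinantal identity for the matrix with the subtracted $1$'s and then removes them via the expansion $\det(A+B)=\sum_{I,J}\det(A_{I,J})\det(B_{I^c,J^c})$, using that $B=\left(\frac{1}{z_i-w_j}\right)$ is deterministic, before invoking the Cauchy determinant formula.
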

\begin{proof}
Let $z_1,\dots , z_n, w_1 \dots w_n \in D(0,R)$ be pairwise distinct. From formulas (\ref{formula:genseriesH}) and (\ref{formula:genseriesE}), equation (\ref{eq:propgiambelli'}) can be written as 
\begin{multline}\label{eq:propgiambelli}
\E_\P \left[ \prod_{i=1}^n \rho(H)(w_i)\rho(E)(-z_i) \right] \\
= \det \left( \frac{1}{z_i-w_j} \right)_{1 \leq i,j\leq n}^{-1} \det \left( \frac{1}{z_i-w_j}\E_\P \left[ \rho(H)(w_i)\rho(E)(z_j) \right] \right)_{i,j=1}^n.
\end{multline}
The proof we present is adapted from \cite{giambelli}, Proposition 2.2, and is based on identity (\ref{form:usefullspec}). For brevity, we omit the specialization $\rho$ and write simply 
\begin{align*}
f(X)&:=\rho(f)(X), \quad f \in \Lambda, \quad X \in \Conf(E),\\
F(z)&:= \rho(F)(z), \quad F \in \Lambda [[z]].
\end{align*}
By formula (\ref{form:usefullspec}), we have for $\P$-a.e. $X \in \Conf(\R)$ 
\begin{multline} \label{eq:propgiambellicauchy}
\det \left( \frac{ H(w_j)E(-z_i) - 1}{z_i-w_j} \right)_{i,j=1}^n = \det \left( \sum_{p_j,q_i=0}^{+\infty} (-1)^{q_i}\s_{(p_j|q_i)}(X)w_j^{p_j}z_i^{q_i} \right)_{i,j=1}^n \\
= \sum_{p_1,\dots, p_n,q_1, \dots, q_n=0}^{+\infty} (-1)^{q_1+ \dots + q_n} \s_{(p_1, \dots, p_n |q_1,\dots,q_n)}(X)w_1^{p_1}\dots w_k^{p_n}z_1^{q_1} \dots z_k^{q_n}.
\end{multline}
Cauchy's contour integral formula now implies that $\rho$ is an $L^1$-specialization, as the left-hand-side of equation (\ref{eq:propgiambellicauchy}) is integrable with respect to $\P$ by condition (\ref{cond:unifl1}). Assume now that $\P$ is Giambelli compatible in the sense of Definition \ref{def:giambelli}.
Taking expectation with respect to $\P$ on both sides of the latter equality (\ref{eq:propgiambellicauchy}), we have 
\begin{multline} \label{eq:giambelli0.25} 
\E_\P \left[ \det \left( \frac{ H(w_j)E(-z_i) - 1}{z_i-w_j} \right)_{i,j=1}^n \right]
\\
= \sum_{p_1,\dots, p_n,q_1, \dots, q_n=0}^{+\infty} (-1)^{q_1+ \dots + q_n}  \E_\P [ \s_{(p_1, \dots, p_n |q_1,\dots,q_n)}(X)]w_1^{p_1}\dots w_n^{p_n}z_1^{q_1} \dots z_n^{q_n}, 
\end{multline}
where the interchange of summation and expectation is justified from the local uniform convergence of the series $H(u)$ and $E(u)$ on $D(0,R)$. Since $\P$ is Giambelli compatible, we obtain 
\begin{multline}
\E_\P \left[ \det \left( \frac{ H(w_j)E(-z_i) - 1}{z_i-w_j} \right)_{i,j=1}^n \right]\\
= \sum_{p_1,\dots, p_n,q_1, \dots, q_n=0}^{+\infty} (-1)^{q_1+ \dots + q_n} { \det \left( \E_\P [\s_{(p_i |q_j)(X)}] \right)_{i,j=1}^n}{w_1^{p_1}\dots w_n^{p_n}z_1^{q_1} \dots z_n^{q_n}},
\end{multline}
which again might be written as 
\begin{multline} \label{eq:giambelli0.5} 
\E_\P \left[ \det \left( \frac{ H(w_j)E(-z_i) - 1}{z_i-w_j} \right)_{i,j=1}^n \right] \\
=\det \left( \sum_{p_j,q_i=0}^{+\infty} (-1)^{q_i}\E_\P [{\s_{(p_j|q_i)}(X)]}{w_j^{p_j}z_i^{q_i}} \right)_{i,j=1}^n  \\
= \det \left(  \E_\P \left[ \frac{ H(w_j)E(-z_i) - 1}{z_i-w_j} \right] \right)_{i,j=1}^n.
\end{multline}
We now want to get rid of the $-1$. Consider the $n \times n$ matrices
\[A=(A(i,j))_{i,j=1}^n= \left( \frac{ H(w_j^{-1})E(-z_i^{-1}) - 1}{z_i-w_j} \right)_{i,j=1}^n, \quad B=(B(i,j))_{i,j=1}^n= \left( \frac{1}{z_i-w_j} \right)_{i,j=1}^n,\]
and for subsets $I,J \subset \{1, \dots, n\}$ with $|I|=|J|$, denote by $A_{I,J}$ (resp. $B_{I,J}$) the submatrix of $A$ (resp. $B$) with entries indexed by $I \times J$. The preceding argument can be repeated in order to establish that 
\[ \E_\P [ \det (A_{I,J}) ] = \det ( \E_\P [A(i,j)] )_{i \in I, j \in J}. \]
From the expansion 
\begin{align*}
\det ( A + B) = \sum_{I, J \subset \{1, \dots, n\}, |I|=|J|} \det(A_{I,J}) \det(B_{I^c,J^c}),
\end{align*}
since $B$ is deterministic, we have 
\begin{multline*}
 \E_\P [ \det (A + B) ]  = \sum_{I, J \subset \{1, \dots, n \}, |I|=|J|}  \det( B_{I^c, J^c}) \E_\P [\det(A_{I,J})] \\
 =  \sum_{I, J \subset \{1, \dots, n\}, |I|=|J|}  \det(B_{I^c,J^c}) \det ( \E_\P [A(i,j)] )_{i \in I, j \in J} = \det( \E_\P [ A+B ]),
\end{multline*}
i.e. 
\begin{align*}
 \E_\P \left[ \det \left( \frac{ H(w_j)E(-z_i) }{z_i-w_j} \right)_{i,j=1}^n \right] =   \det \left( \E_\P \left[ \frac{ H(w_j)E(-z_i) }{z_i-w_j} \right] \right)_{i,j=1}^n.
\end{align*}
Using the multiplicativity of the determinant, the latter equality can be rewritten as 
\begin{align} \label{eq:giambelli1}
\E_\P \left[ \prod_{j=1}^n H(w_j)E(-z_j) \right] = \det \left( \frac{1}{z_i-w_j} \right)^{-1}_{1 \leq i,j \leq k} \det \left( \frac{1}{z_i-w_j} \E_\P \left[ H(w_j) E(z_i)\right] \right)_{i,j=1}^n.
\end{align}
The ‘‘only if’’ part of the proposition is established.
\par
For the ‘‘if’’ part, observe that all the arguments above can be reversed. Indeed, (\ref{eq:propgiambelli}) is equivalent to (\ref{eq:giambelli1}), which is equivalent to (\ref{eq:giambelli0.5}). One obtains the Giambelli compatibility (\ref{eq:giambelli0}) by equating the coefficients in the expansion performed in (\ref{eq:giambelli0.25}). The proof is complete.
\end{proof}
The structure of equation (\ref{eq:propgiambelli'}) implies the following proposition, which we will use later in Lemma \ref{lem:mainlem}.
\begin{prop}\label{prop:shift}Let $\rho$ be a $R$-uniform $L^1$-specialization and let $a=(a_k)_{k \geq 1} \subset \C$ be a bounded sequence of complex numbers. We define the specialization $\rho_a$ by 
\begin{align*}
\rho_a(\p_k)=\rho(\p_k)-a_k, \quad k=1,2,\dots
\end{align*}
Assume that there exists $R'>0$ such that $\rho_a$ is a $R'$-uniform $L^1$-specialization. Then $(\P, \rho)$ is Giambelli compatible if and only if $(\P,\rho_a)$ is Giambelli compatible.
\end{prop}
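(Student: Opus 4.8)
The plan is to reduce everything to the characterization of Giambelli compatibility furnished by Proposition \ref{prop:propgiambelli}, applied both to $\rho$ (an $R$-uniform $L^1$-specialization) and to $\rho_a$ (an $R'$-uniform $L^1$-specialization by hypothesis). The key observation is that passing from $\rho$ to $\rho_a$ only shifts the random series $S_X^\rho$ of \eqref{eq:defS} by a \emph{deterministic} analytic function. Indeed, from $\rho_a(\p_k)=\rho(\p_k)-a_k$ one gets, for $\P$-a.e. $X$,
\[ S_X^{\rho_a}(u)=\sum_{k\ge1}\frac{\rho(\p_k)(X)-a_k}{k}u^k = S_X^\rho(u)-A(u), \qquad A(u):=\sum_{k\ge1}\frac{a_k}{k}u^k. \]
Since $(a_k)$ is bounded, $A$ has radius of convergence at least $1$; in any case $A=S_X^\rho-S_X^{\rho_a}$ is the difference of two series each of radius at least $\min(R,R')$, so $A$ converges on $D(0,r)$ with $r:=\min(R,R')>0$, and $A$ does not depend on $X$.

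Next I would insert this relation into \eqref{eq:propgiambelli'} and track the deterministic factors. Fix $n$ and pairwise distinct $z_1,\dots,z_n,w_1,\dots,w_n\in D(0,r)$. On the left-hand side,
\[ \exp\Big(\sum_{i=1}^n S_X^{\rho_a}(w_i)-S_X^{\rho_a}(z_i)\Big) = \exp\Big(\sum_{i=1}^n\big(A(z_i)-A(w_i)\big)\Big)\,\exp\Big(\sum_{i=1}^n S_X^{\rho}(w_i)-S_X^{\rho}(z_i)\Big), \]
and, the first factor being a nonzero constant, it pulls out of $\E_\P$. On the right-hand side the Cauchy prefactor $\det(1/(z_i-w_j))^{-1}$ is untouched, while in the second determinant each entry acquires the factor $\exp(A(z_j)-A(w_i))$, whose dependence splits into a row factor $\exp(-A(w_i))$ and a column factor $\exp(A(z_j))$. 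Such a factorization pulls out of the determinant as $\prod_i\exp(-A(w_i))\prod_j\exp(A(z_j))=\exp\big(\sum_i (A(z_i)-A(w_i))\big)$, which is exactly the nonzero constant produced on the left-hand side. Dividing through, I conclude that \eqref{eq:propgiambelli'} for $\rho_a$ at the points $z_i,w_i$ holds if and only if \eqref{eq:propgiambelli'} for $\rho$ at the same points holds.

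It remains to turn this pointwise equivalence on $D(0,r)$ into the desired equivalence of Giambelli compatibilities. Here I would use that the characterization of Proposition \ref{prop:propgiambelli} localizes to any sub-disk: for an $R$-uniform $L^1$-specialization and any $0<r\le R$, Giambelli compatibility holds if and only if \eqref{eq:propgiambelli'} holds for all pairwise distinct $z_i,w_i\in D(0,r)$. The ``only if'' direction is immediate by restriction, while the ``if'' direction is precisely the final step of the proof of Proposition \ref{prop:propgiambelli}: the Giambelli identity \eqref{eq:giambelli0} is recovered by equating the coefficients of the power-series identity \eqref{eq:giambelli0.25}, and these coefficients are already determined by the equality of the two sides on an arbitrarily small neighborhood of the origin. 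Applying this localized form with the same $r=\min(R,R')$ to both $\rho$ and $\rho_a$, and combining with the pointwise equivalence above, we obtain that $(\P,\rho)$ is Giambelli compatible iff \eqref{eq:propgiambelli'} holds for $\rho$ on $D(0,r)$ iff \eqref{eq:propgiambelli'} holds for $\rho_a$ on $D(0,r)$ iff $(\P,\rho_a)$ is Giambelli compatible.

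The algebra is routine once the shift $S_X^{\rho_a}=S_X^\rho-A$ and the row/column factorization are in hand; the main point requiring care is the bookkeeping of the domains of convergence — ensuring that all three series $S_X^\rho$, $S_X^{\rho_a}$ and $A$ converge simultaneously on one fixed disk $D(0,r)$, and that the localized form of Proposition \ref{prop:propgiambelli} is legitimately available on this possibly smaller disk when $R\neq R'$.
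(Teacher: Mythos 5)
Your proof is correct and follows essentially the same route as the paper's: write $S_X^{\rho_a}=S_X^\rho-A$ with $A$ deterministic, pull the factor $\exp\bigl(\sum_i(A(z_i)-A(w_i))\bigr)$ out of the expectation on the left and out of the determinant on the right via the row/column factorization, cancel, and invoke Proposition \ref{prop:propgiambelli}. Your explicit justification that the characterization of Proposition \ref{prop:propgiambelli} localizes to the common disk $D(0,\min(R,R'))$ is a point the paper leaves implicit, and it is handled correctly.
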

\begin{proof}
From definition of the series $S_X^\rho(u)$ (\ref{eq:defS}), we have for all $z,w \in D(0,\min(R,R',1))$
\begin{align*}
S_X^{\rho_a}(w) - S_X^{\rho_a}(z) = S_X^{\rho}(w) - S_X^\rho(z) + \sum_{k \geq 1} \frac{a_k}{k}(z^k-w^k).
\end{align*}
Thus, on the one hand, one can write
\begin{multline*}
\E_\P \left[ \exp \left( \sum_{i=1}^n S_X^{\rho_a}(w_i)- S_X^{\rho_a}(z_i) \right) \right] \\
= \exp \left( \sum_{i=1}^n \sum_{k \geq 1} \frac{a_k}{k}(w_i^k-z_i^k) \right) \E_\P \left[ \exp \left( \sum_{i=1}^n S_X^{\rho}(w_i)- S_X^{\rho}(z_i) \right) \right],
\end{multline*}
for all pairwise distinct $z_1,\dots,z_n,w_1,\dots, w_n \in D(0,\min(R,R',1))$. On the other hand, the multilinearity of the determinant implies that
\begin{multline*}
\det \left( \frac{1}{z_i-w_j} \E_\P \left[ \exp \left( S_X^{\rho_a}(w_j) - S_X^{\rho_a}(z_i) \right) \right] \right)_{i,j=1}^n \\
= \exp \left( \sum_{i=1}^n \sum_{k \geq 1} \frac{a_k}{k}(w_i^k-z_i^k) \right) \det \left( \frac{1}{z_i-w_j} \E_\P \left[ \exp \left( S_X^{\rho}(w_j) - S_X^{\rho}(z_i) \right) \right] \right)_{i,j=1}^n.
\end{multline*}
Thus, the factor
\[ \exp \left( \sum_{i=1}^n \sum_{k \geq 1} \frac{a_k}{k}(w_i^k-z_i^k) \right) \]
can be cancelled out on both sides of the equation
\begin{multline*}
\E_\P \left[ \exp \left( \sum_{i=1}^n S_X^{\rho_a}(w_i)- S_X^{\rho_a}(z_i) \right) \right] \\
= \det \left( \frac{1}{z_i-w_j} \right)_{1 \leq i,j \leq n}^{-1} \det \left( \frac{1}{z_i-w_j} \E_\P \left[ \exp \left( S_X^{\rho_a}(w_j) - S_X^{\rho_a}(z_i) \right) \right] \right)_{i,j=1}^n
\end{multline*}
and one obtains equation (\ref{eq:propgiambelli'}). Recalling Proposition \ref{prop:propgiambelli}, the proof is complete.
\end{proof}
\subsection{Uniform $L^1$-specializations for determinantal point processes} \label{sec:giambelli2}
Let $E$ be a subset of $\R$, $\mu$ a Radon measure on $E$ and $\P$ a determinantal point process on $(E,\mu)$ with correlation kernel $K$. We assume that the kernel $K$ is the kernel of an orthogonal projection. Let $(f_k)_{k \geq 1}$ be a sequence of bounded functions 
\[f_k : E \rightarrow \C \]
such that there exists $R>0$ such that
\begin{align} \label{cond:cvf}
\sup_{k \geq 1} R^k ||f_k||_{L^\infty(E,d\mu)} <+\infty.
\end{align}
The latter condition implies that the radius of convergence of the series
\begin{align*}
f_u(x):=\sum_{k \geq 1} \frac{f_k(x)}{k} u^k
\end{align*}
is at least $R$, for $\mu$-almost every $x \in E$. We assume that
\begin{align} \label{cond:cvf1}
\int_{E} |f_1(x)|^2K(x,x) d\mu(x) &<+\infty,\\ \label{cond:cvfk}
\int_E |f_k(x)| K(x,x) d\mu(x) &<+\infty, \quad k \geq 2,
\end{align}
and that there exists a function $h : E \rightarrow [0,+ \infty)$ measurable with respect to the measure $\mu$ verifying
\begin{align} \label{cond:h1}
\int_E h(x) K(x,x) d\mu(x) <+\infty
\end{align}
and such that
\begin{align} \label{cond:h2}
\left| f_u(x) - uf_1(x) \right| = O \left(h(x) \right)
\end{align}
as $|x| \rightarrow + \infty$, uniformly in $u$ on compact sets. The last assumption we need is
\begin{align}  \label{cond:cvexpf}
\int_E \left|\exp \left(\sum_{j=1}^lf_{w_j}(x)-\sum_{j=1}^{l'}f_{z_j}(x)\right)-1\right|^2 K(x,x) d\mu(x) &<+\infty,
\end{align}
for all $l,l' \in \N$ and all $z_1,\dots,z_l,w_1,\dots,w_{l'} \in D(0,R)$.

The main result of this section is the following Proposition.
\begin{prop} \label{prop:l1}The specializations $\rho$ and $\tilde{\rho}$ defined by
\begin{align*}
\rho(\p_k):= \sum_{x \in X} f_k(x) - \E_\P \sum_{x \in X} f_k(x), \quad k=1,2,\dots
\end{align*}
and
\begin{align*}
\tilde{\rho}(\p_1)= \rho(\p_1), \quad\tilde{\rho}(\p_k)=\sum_{x \in X} f_k(x), \quad k \geq 2,
\end{align*}
are $R$-uniform $L^1$-specializations.
\end{prop}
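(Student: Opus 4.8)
The plan is to verify, separately for $\rho$ and for $\tilde{\rho}$, the two requirements in the definition of an $R$-uniform $L^1$-specialization: that the series $S_X^\rho(u)$ of \eqref{eq:defS} has radius of convergence at least $R$ for $\P$-almost every $X$, and the exponential integrability \eqref{cond:unifl1}. The backbone of both verifications is the decomposition
\[
S_X^\rho(u)=u\,\rho(\p_1)(X)+\sum_{x\in X}\bigl(f_u(x)-u f_1(x)\bigr)-\int_E\bigl(f_u(x)-u f_1(x)\bigr)K(x,x)\,d\mu(x),
\]
obtained from $f_u-uf_1=\sum_{k\ge 2}\frac{f_k}{k}u^k$ after interchanging the summation over $k$ with the summation over $x\in X$ and with the integral, an interchange justified by the absolute convergence established below. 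Here $\rho(\p_1)$ must be read as the $L^2$-limit of the centered truncations $\sum_{|x|\le T}f_1-\int_{-T}^T f_1\,K$, which exists with finite variance by \eqref{cond:cvf1} together with the standard variance formula for linear statistics and the projection identity $\int_E|K(x,y)|^2\,d\mu(y)=K(x,x)$; the integral $\int_E f_1\,K$ itself need not converge, which is exactly why the centering is indispensable.

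For the radius of convergence I would estimate, for each $r<R$, the supremum over $\overline{D(0,r)}$. Since the $f_k$ are bounded with $\|f_k\|_{L^\infty}\le CR^{-k}$ by \eqref{cond:cvf}, one has the uniform bound $\sup_{|u|\le r}|f_u(x)-uf_1(x)|\le C\sum_{k\ge 2}\frac{(r/R)^k}{k}<\infty$ on all of $E$, while \eqref{cond:h2} gives $\sup_{|u|\le r}|f_u(x)-uf_1(x)|\le C_r\,h(x)$ for $|x|$ large. Splitting the integral into a bounded region (finite because $K(x,x)$ is locally integrable by the local trace-class assumption) and its complement (finite because $\int_E h\,K<\infty$ by \eqref{cond:h1}), one gets $\E\bigl[\sum_{x\in X}\sup_{|u|\le r}|f_u(x)-uf_1(x)|\bigr]=\int_E\sup_{|u|\le r}|f_u-uf_1|\,K\,d\mu<\infty$. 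Thus $\P$-almost surely the series $\sum_{x\in X}(f_u(x)-uf_1(x))$ converges locally uniformly by the Weierstrass $M$-test, hence is analytic on $D(0,R)$ (letting $r\uparrow R$ along a countable sequence); the same bound makes the deterministic integral term analytic there, and $u\mapsto u\rho(\p_1)(X)$ is entire, so $S_X^\rho$ has radius of convergence at least $R$.

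The exponential integrability is the step I expect to be the main obstacle. Fixing $z_1,\dots,z_n,w_1,\dots,w_n\in D(0,R)$, set $\Psi(x)=\sum_{k=1}^n(f_{w_k}(x)-f_{z_k}(x))$ and $g=e^{\Psi}$. Recombining the $\rho(\p_1)$-terms with the deterministic integrals in the decomposition above yields $\sum_{k=1}^n(S_X^\rho(w_k)-S_X^\rho(z_k))=\sum_{x\in X}\Psi(x)-\int_E\Psi\,K$, again as a regularized linear statistic, so the random variable in \eqref{cond:unifl1} equals the regularized multiplicative functional $\mathcal{M}=\lim_{T\to\infty}\prod_{x\in X,\ |x|\le T}g(x)\cdot\exp\bigl(-\int_{-T}^T\Psi\,K\,d\mu\bigr)$. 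The crucial input is \eqref{cond:cvexpf}, which asserts precisely $\int_E|g-1|^2K(x,x)\,d\mu<\infty$; by the projection identity this is $\|K(g-1)\|_{\mathrm{HS}}^2$, so $K(g-1)$ is Hilbert--Schmidt. I would then truncate, taking $g_T$ equal to $g$ on $[-T,T]$ and to $1$ elsewhere, so that Definition \ref{def:det2} gives $\E[\prod_x g_T]=\det\bigl(I+\chi_{[-T,T]}K(g_T-1)\bigr)$, and use the Hilbert--Schmidt bound to pass to the limit $T\to\infty$, obtaining convergence of the normalized products to $\mathcal{M}$ in $L^1$ (indeed in $L^2$). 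The second-moment estimate driving this $L^2$ convergence involves products of $g$ and $\bar g$, that is, exponentials of $\sum_j f_{w_j}-\sum_j f_{z_j}$ with doubled families of parameters; this is exactly why \eqref{cond:cvexpf} is imposed for all $l,l'\in\N$, and reducing $|\mathcal{M}|$ to its modulus (replacing $\Psi$ by $\mathrm{Re}\,\Psi$) is governed by the same bound. Controlling this non-compactly-supported multiplicative functional purely through a Hilbert--Schmidt estimate is the delicate point.

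Finally, the specialization $\tilde{\rho}$ reduces to $\rho$ with no extra work: since the power sums of index $k\ge 2$ are no longer centered, $S_X^{\tilde{\rho}}(u)-S_X^\rho(u)=\sum_{k\ge 2}\frac{u^k}{k}\int_E f_k\,K=\int_E(f_u-uf_1)K$ is a \emph{deterministic} function, analytic on $D(0,R)$ by the bound of the second paragraph. Hence $S_X^{\tilde{\rho}}$ has the same radius of convergence as $S_X^\rho$, and in \eqref{cond:unifl1} this difference contributes only a finite deterministic factor, so the exponential integrability for $\tilde{\rho}$ follows immediately from that for $\rho$. This completes the plan for both specializations.
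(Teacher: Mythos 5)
Your plan follows the same architecture as the paper's proof: the decomposition of $S_X^\rho(u)$ into $u$ times the centered linear statistic of $f_1$, plus the absolutely convergent tail $\sum_{x}(f_u(x)-uf_1(x))$, plus a deterministic integral, is exactly the paper's splitting $S(X,u;T_N)+uH(X;T_N)+I(u;T_N)$ in Lemma \ref{lem:cvunif}; the identification of the exponential in \eqref{cond:unifl1} with a regularized multiplicative functional of $g=e^\Psi$ controlled by \eqref{cond:cvexpf} is the content of Proposition \ref{prop:regmultfunction}; and the reduction of $\tilde\rho$ to $\rho$ by a deterministic analytic factor is verbatim the paper's conclusion. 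Two local differences are worth recording. First, for the radius of convergence you run a Weierstrass $M$-test on the $k\ge 2$ tail using \eqref{cond:cvf}, \eqref{cond:h1}, \eqref{cond:h2} directly, whereas the paper instead deduces $\int_E|f_u|^2K\,d\mu<\infty$ from \eqref{cond:cvexpf} (via boundedness of $|(e^{f_u}-1)/f_u|$) and invokes the variance criterion of \cite{bufetovquasisymmetries}; both work, and yours is arguably more self-contained. Second, and more importantly, the step you yourself flag as ``the delicate point'' --- passing to the limit $T\to\infty$ in the truncated multiplicative functionals with only a Hilbert--Schmidt hypothesis, to get $L^1$ or $L^2$ convergence --- is left unexecuted in your proposal, and it is the heart of the proposition. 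The paper avoids proving any convergence of the truncations at this stage: it reduces to $|g_T|$ by taking real parts, applies Fatou's lemma, and then bounds $\log\E_\P\prod_{x}|g_T(x)|-\E_\P\sum_x\log|g_T(x)|$ uniformly in $T$ by $C\int_E|g-1|^2K\,d\mu$, using only the inequality $\det(I+\chi K(|g_T|-1))\le\exp\mathrm{Tr}(\chi K(|g_T|-1))$ from Definition \ref{def:det2} together with the elementary bound $|\log u+1-u|\le C(u-1)^2$ for $u$ bounded away from $0$ and $\infty$. This first-moment estimate is all that is needed for \eqref{cond:unifl1}, and you should substitute it for the $L^2$-convergence argument you sketch, which is both harder and stronger than required.
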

The proof requires some steps. The following lemma guarantees the existence of the above-defined specializations $\rho$ and $\tilde{\rho}$.
\begin{lem} \label{lem:cvf} Under condition (\ref{cond:cvf1}), there exists an increasing sequence $(T_N)_{N \in \N}$ tending to $+\infty$ as $N \rightarrow + \infty$ such that the limit
\begin{align}
\lim_{N \rightarrow + \infty} &\sum_{x \in X\cap [-T_N,T_N]} f_1(x) - \E_\P \sum_{x \in X \cap [-T_N,
T_N]} f_1(x), \quad k=1,2,\dots
\end{align}
exists $\P$-almost surely and in $L^1(\Conf(E),\P)$. Condition (\ref{cond:cvfk}) implies that, for all $k \geq 2$, the sums
\begin{align}
\sum_{x \in X} f_k(x)
\end{align}
are absolutely convergent for $\P$-almost every $X \in \Conf(E)$ and belong to $L^1(\Conf(E),\P)$.
\end{lem}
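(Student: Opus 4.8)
The plan is to handle the two assertions separately. The key analytic input is the variance estimate for linear statistics of a determinantal point process governed by a projection kernel, which I will establish first and then use to produce an $L^2$-convergent (hence $L^1$-convergent) sequence, upgrading to almost sure convergence by passing to a subsequence.

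For the variance bound, fix a bounded Borel set $B \subset E$ and a bounded function $g : E \to \C$; since $K$ is locally trace class, $\#_B \in L^2(\Conf(E),\P)$, so the centered linear statistic $\bar S_g^B := \sum_{x \in X \cap B} g(x) - \E_\P \sum_{x \in X \cap B} g(x)$ lies in $L^2(\Conf(E), \P)$. Using the two-point correlation function $\det(K(x_i,x_j))$ together with the projection identity $K(x,x) = \int_E |K(x,y)|^2 d\mu(y)$, a direct computation for real-valued $g$ gives
\[
\E_\P\big[ (\bar S_g^B)^2 \big] = \tfrac12 \int_B \int_B (g(x)-g(y))^2 |K(x,y)|^2 d\mu(x) d\mu(y) \le \int_B g(x)^2 K(x,x) d\mu(x),
\]
the inequality following since the cross term $\int_B\int_B g(x)g(y)|K(x,y)|^2 d\mu(x)d\mu(y) = \mathrm{Tr}(M_g K M_g K) = \mathrm{Tr}\big((K M_g K)^2\big) \ge 0$ (here $M_g$ denotes multiplication by $g$ and we use $K=K^*=K^2$). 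Splitting a complex-valued function into its real and imaginary parts then yields
\[
\E_\P\big[ |\bar S_{f_1}^B|^2 \big] \le \int_B |f_1(x)|^2 K(x,x) d\mu(x)
\]
for every bounded Borel set $B$.

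To prove the first assertion, set $\bar S_T := \sum_{x \in X \cap [-T,T]} f_1(x) - \E_\P \sum_{x \in X \cap [-T,T]} f_1(x)$. For $T < T'$ the increment $\bar S_{T'} - \bar S_T$ is the centered linear statistic of $f_1$ over the annulus $\{T < |x| \le T'\}$, so the bound above gives $\E_\P[|\bar S_{T'}-\bar S_T|^2] \le \int_{T < |x| \le T'} |f_1(x)|^2 K(x,x) d\mu(x)$. Condition \eqref{cond:cvf1} makes $\int_E |f_1|^2 K(x,x) d\mu$ finite, hence its tails vanish; thus $(\bar S_T)_{T>0}$ is Cauchy in $L^2(\Conf(E),\P)$ and converges there, a fortiori in $L^1$, to a limit $L$. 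For the almost sure statement I choose $T_N \uparrow +\infty$ with $\int_{|x|>T_N} |f_1|^2 K(x,x) d\mu \le 4^{-N}$, which is possible exactly because the tails vanish. Then $\E_\P[|\bar S_{T_{N+1}}-\bar S_{T_N}|^2] \le 4^{-N}$, so by Cauchy--Schwarz $\sum_N \E_\P[|\bar S_{T_{N+1}}-\bar S_{T_N}|] \le \sum_N 2^{-N} < +\infty$; therefore $\sum_N |\bar S_{T_{N+1}}-\bar S_{T_N}| < +\infty$ $\P$-almost surely, the telescoping sequence $(\bar S_{T_N})$ converges $\P$-almost surely, and its limit coincides with $L$.

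The second assertion is elementary. By the one-point correlation function, $\E_\P \sum_{x \in X} |f_k(x)| = \int_E |f_k(x)| K(x,x) d\mu(x)$, which is finite for $k \ge 2$ by \eqref{cond:cvfk}. Hence the non-negative random variable $\sum_{x \in X} |f_k(x)|$ has finite expectation, is finite $\P$-almost surely, and dominates $|\sum_{x \in X} f_k(x)|$; consequently $\sum_{x \in X} f_k(x)$ converges absolutely $\P$-almost surely and belongs to $L^1(\Conf(E), \P)$. The only genuine difficulty is in the first part: $L^2$-convergence of the net $(\bar S_T)$ does not by itself give almost sure convergence of the full net, which is why the statement is restricted to a subsequence $(T_N)$, chosen so that the variances of consecutive increments form a summable series.
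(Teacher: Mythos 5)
Your proof is correct and in substance follows the same route as the paper: the paper handles the first assertion by citing Proposition 4.1 of \cite{bufetovquasisymmetries}, whose content is precisely the variance inequality for linear statistics under a projection kernel that you rederive, and then the $L^{2}$-Cauchy argument over annuli; your treatment of $k\geq 2$ via $\E_\P\sum_{x\in X}|f_k(x)|=\int_E|f_k|K(x,x)\,d\mu$ is identical to the paper's. Your version is more self-contained, and the explicit choice of $T_N$ with summable increment variances to upgrade $L^2$-convergence to almost sure convergence along the subsequence is a detail the paper leaves implicit in its citation. One small inaccuracy: writing $\bar S^B_g$ for the centered linear statistic of a real bounded $g$ over a bounded Borel set $B$, the intermediate identity
\[
\E_\P\bigl[(\bar S^B_g)^2\bigr]=\tfrac12\int_B\int_B(g(x)-g(y))^2|K(x,y)|^2\,d\mu(x)\,d\mu(y)
\]
holds only when $B$ carries all of the mass of $K(x,\cdot)$ (in general one has ``$\geq$'', because $\int_B|K(x,y)|^2\,d\mu(y)\leq K(x,x)$). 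This does not affect your conclusion, since the bound you actually use follows from the exact identity $\E_\P[(\bar S^B_g)^2]=\int_B g^2K(x,x)\,d\mu-\mathrm{Tr}\bigl((KM_{g\chi_B}K)^2\bigr)$ together with the positivity of the trace term, which is exactly the justification you give.
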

\begin{proof}
By condition (\ref{cond:cvf1}), the variances of the sums
\begin{align*}
\sum_{x \in X} f_1(x)
\end{align*}
is finite. By \cite[Proposition 4.1]{bufetovquasisymmetries}, the random variable
\begin{align*}
\sum_{x \in X} f_1(x) - \E_\P \sum_{x \in X} f_1(x)
\end{align*}
is a well-defined element of $L^2(\Conf(E),\P) \subset L^1(\Conf(E),\P)$. The first statement of the Lemma follows. Observe now that, for all $k \geq 2$, we have
\begin{align*}
\E_\P \sum_{x \in X} |f_k(x)| = \int_E |f_k(x)| K(x,x) d\mu(x),
\end{align*}
which establishes the second statement.
\end{proof}
\begin{lem} \label{lem:cvunif} Under condition (\ref{cond:cvexpf}), the limit 
\begin{align*}
\lim_{T \rightarrow + \infty} \sum_{x \in X \cap [-T,T]} f_u(x) - \E_\P \sum_{x \in X \cap [-T,T]} f_u(x) 
\end{align*}
exists in $L^1(\Conf(E), \P)$. If there exists a function $h:E \rightarrow [0, + \infty)$ satisfying conditions (\ref{cond:h1}) and (\ref{cond:h2}), then the limit
\begin{align*}
\lim_{T \rightarrow + \infty} \sum_{x \in X \cap [-T,T]} f_u(x) - \E_\P \sum_{x \in X \cap [-T,T]} uf_1(x) 
\end{align*}
exists in $L^1(\Conf(E),\P)$. Moreover, there exists a Borel set $\mathcal{W} \subset \Conf(E)$ such that, for any $r \in [0,R)$, there exists a sequence $(T_N)_{N \in \N}$ tending to $+ \infty$ as $N \rightarrow + \infty$ such that the limits
\begin{align*}
\lim_{N \rightarrow + \infty} \sum_{x \in X \cap [-T_N,T_N]} f_u(x) - \E_\P \sum_{x \in X \cap [-T_N,T_N]} f_u(x), \\
 \lim_{N \rightarrow + \infty} \sum_{x \in X \cap [-T_N,T_N]} f_u(x) - \E_\P \sum_{x \in X \cap [-T_N,T_N]} uf_1(x)
\end{align*}
exist for all $X \in \mathcal{W}$, uniformly for $u \in D(0,r)$.
\end{lem}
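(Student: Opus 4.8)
The plan is to reduce all three assertions to the $L^2$-theory of centered linear statistics for determinantal processes governed by a projection, as encoded in \cite[Proposition 4.1]{bufetovquasisymmetries} and already exploited in Lemma \ref{lem:cvf}. Recall that for a projection kernel one has $\int_E|K(x,y)|^2d\mu(y)=K(x,x)$, so that for any $\varphi$ with $\int_E|\varphi(x)|^2K(x,x)d\mu(x)<+\infty$ the centered statistic $\tilde S(\varphi):=\sum_{x\in X}\varphi(x)-\E_\P\sum_{x\in X}\varphi(x)$ is a well-defined element of $L^2(\Conf(E),\P)$, with the variance bound $\mathrm{Var}\!\left(\sum_x\varphi\right)=\tfrac12\iint_{E^2}|\varphi(x)-\varphi(y)|^2|K(x,y)|^2\,d\mu(x)d\mu(y)\le 2\int_E|\varphi|^2K(x,x)\,d\mu$, and with the truncations $\sum_{x\in X\cap[-T,T]}\varphi-\E_\P\sum_{x\in X\cap[-T,T]}\varphi$ converging to it in $L^2$ as $T\to+\infty$ (dominated convergence applied to the annular increments). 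I would use this as the single analytic engine and treat $f_u$ by expanding it in the $f_k$.

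For the first limit I would write $f_u=\sum_{k\ge1}\frac{u^k}{k}f_k$. Each $\tilde S(f_k)$ lies in $L^2$: for $k=1$ by \eqref{cond:cvf1}, and for $k\ge2$ because \eqref{cond:cvf} and \eqref{cond:cvfk} give $\int_E|f_k|^2K\le\|f_k\|_{L^\infty}\int_E|f_k|K<+\infty$, in accordance with Lemma \ref{lem:cvf}. The candidate limit is the series $\sum_{k\ge1}\frac{u^k}{k}\tilde S(f_k)$, and the assertion reduces to justifying the interchange of $\lim_T$ with the summation over $k$, uniformly enough to pass the limit. This is precisely where \eqref{cond:cvexpf} enters: regarding $u\mapsto e^{f_u}-1$ as a holomorphic map from $D(0,R)$ into $L^2\!\left(E,K(x,x)d\mu\right)$, locally bounded by \eqref{cond:cvexpf}, Cauchy's estimates control the growth in $k$ of the $L^2(K\,d\mu)$-norms of its Taylor coefficients, hence control $\|\tilde S(f_k)\|_{L^2}$ and make $\sum_{k\ge1}\frac{|u|^k}{k}\|\tilde S(f_k)\|_{L^2}$ finite for $|u|<R$. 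The series then converges in $L^2\subset L^1$, uniformly in $T$, which legitimizes the interchange and yields the first limit.

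For the second limit it suffices, granting the first, to analyze the difference of the two centerings, that is, the deterministic quantity $\int_{-T}^{T}\bigl(f_u(x)-uf_1(x)\bigr)K(x,x)\,d\mu(x)$. Since $f_u-uf_1$ is bounded and, by \eqref{cond:h2}, is $O(h)$ as $|x|\to+\infty$, while $K$ is locally trace class and $\int_E hK<+\infty$ by \eqref{cond:h1}, one obtains $\int_E|f_u-uf_1|K<+\infty$; hence $\int_{-T}^{T}(f_u-uf_1)K$ converges to the finite limit $\int_E(f_u-uf_1)K$ as $T\to+\infty$. Adding this convergent deterministic sequence to the first limit produces the second.

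Finally, for the almost sure statement I would pass, for each fixed $u$, from $L^1$-convergence to almost sure convergence along a subsequence, and then diagonalize over a countable dense subset $\{u_j\}$ of $D(0,r)$ to obtain a single sequence $(T_N)_{N\in\N}$ and a full-measure Borel set $\mathcal{W}\subset\Conf(E)$ on which both truncated statistics converge at every $u_j$. The finite-$T$ statistics $u\mapsto\sum_{x\in X\cap[-T_N,T_N]}f_u(x)-\E_\P(\cdots)$ are holomorphic on $D(0,R)$ (power series in $u$ of radius at least $R$) and, by the same norm control as above, locally uniformly bounded for $X\in\mathcal{W}$; the Vitali--Porter theorem then upgrades pointwise convergence on $\{u_j\}$ to locally uniform convergence on $D(0,r)$, with holomorphic limit. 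The main obstacle throughout is the step of the second paragraph: converting the pointwise-in-$u$ integrability hypothesis \eqref{cond:cvexpf} into the uniform-in-$k$ (and, for the almost sure statement, uniform-in-$u$) tail control that legitimizes exchanging the limit in $T$ with the summation over $k$ and with the supremum over $u$. Once that growth control is in hand, the remaining steps are routine consequences of the determinantal variance bound and of Vitali's theorem.
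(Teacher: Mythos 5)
Your treatment of the second limit (absorbing the deterministic recentering $\int_{-T}^{T}(f_u-uf_1)K\,d\mu$ via \eqref{cond:h1} and \eqref{cond:h2}) is exactly the paper's argument. The problem lies in your first and last steps, and you have in fact identified it yourself as ``the main obstacle'' without closing it. For the first limit you expand $f_u=\sum_k\frac{u^k}{k}f_k$ and need $\sum_k\frac{|u|^k}{k}\|\tilde S(f_k)\|_{L^2}<+\infty$; the variance bound only gives $\|\tilde S(f_k)\|_{L^2}^2\le 2\|f_k\|_{L^\infty}\int_E|f_k|K\,d\mu$, and neither \eqref{cond:cvf} nor \eqref{cond:cvfk} controls the growth in $k$ of $\int_E|f_k|K\,d\mu$ (the kernel is only locally trace class, so $\|f_k\|_{L^\infty}\le CR^{-k}$ does not help). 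Your proposed fix --- Cauchy estimates for the map $u\mapsto e^{f_u}-1$ into $L^2(K\,d\mu)$ --- requires this map to be locally \emph{bounded} in $u$, whereas \eqref{cond:cvexpf} is only a pointwise-in-$u$ finiteness statement; moreover the Taylor coefficients of $e^{f_u}-1$ in $u$ are not the $f_k/k$ but polynomial combinations of them, so even granting local boundedness the estimate you want does not fall out directly. The same unproven local uniform boundedness is what your Vitali--Porter argument for the last assertion leans on.

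The paper's proof shows that none of this is needed. For fixed $u$, since $|f_u|$ is uniformly bounded on $E$ by \eqref{cond:cvf}, the ratio $|(e^{f_u}-1)/f_u|$ is bounded away from $0$ and $+\infty$, so \eqref{cond:cvexpf} (applied with a single $w_1=u$) directly yields $\int_E|f_u|^2K\,d\mu<+\infty$; one then applies \cite[Proposition 4.1]{bufetovquasisymmetries} to the single function $f_u$ --- no expansion in $k$, no interchange of limits, no uniformity in $u$ required for the $L^1$ statements. For the uniform almost-sure convergence, instead of Vitali the paper decomposes the truncated centered statistic as $S(X,u;T_N)+uH(X;T_N)+I(u;T_N)$, where $S(X,u;T_N)=\sum_{x\in X\cap[-T_N,T_N]}(f_u(x)-uf_1(x))$ is absolutely convergent uniformly in $u$ on compacts thanks to the dominating function $h$, $I(u;T_N)$ is the deterministic integral (also uniformly convergent), and $H(X;T_N)$ is the centered statistic of $f_1$, which does not depend on $u$ at all; convergence of the left-hand side for $X\in\mathcal{W}$ then forces convergence of $H(X;T_N)$, and uniformity in $u$ follows term by term. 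You should either adopt this decomposition or supply a genuine proof of the local uniform boundedness your route requires; as written, the argument does not go through.
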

\begin{proof}Since, for any fixed $u \in D(0,R)$, the function $f_u(x)$ is bounded, the function
\begin{align*}
\left|\frac{\exp\left(f_u(x)\right)-1}{f_u(x)} \right|
\end{align*}
is bounded away from $0$ and $+ \infty$. Thus, condition (\ref{cond:cvexpf}) implies that
\begin{align*}
\int_E |f_u(x) |^2 K(x,x) d\mu(x) <+\infty.
\end{align*}
The variance of the sum
\begin{align*}
\sum_{x \in X} f_u(x)
\end{align*}
is then finite. By \cite[Proposition 4.1]{bufetovquasisymmetries}, the limit
\begin{align} \label{cvl1fu}
\lim_{T \rightarrow + \infty} &\sum_{x \in X \cap [-T,T]} f_u(x) - \E_\P \sum_{x \in X \cap [-T,T]} f_u(x)
\end{align}
exists in $L^1(\Conf(E),\P)$. Observe now that, for any $T>0$, we have
\begin{multline*}
\sum_{x \in X \cap [-T,T]} f_u(x) - \E_\P \sum_{x \in X \cap [-T,T]} uf_1(x) \\
= \sum_{x \in X \cap [-T,T]} f_u(x) - \E_\P \sum_{x \in X \cap [-T,T]} \left(f_u(x) -f_u(x) + uf_1(x) \right)\\
= \sum_{x \in X \cap [-T,T]} f_u(x) - \E_\P \sum_{x \in X \cap [-T,T]} f_u(x) + \int_{-T}^T \left( f_u(x) - uf_1(x) \right) K(x,x) d\mu(x).
\end{multline*}
If there exists a function $h : E \rightarrow [0,+ \infty)$ satisfying (\ref{cond:h1}) and (\ref{cond:h2}), the integral
\begin{align*}
\int_E \left( f_u(x) - uf_1(x) \right) K(x,x) d\mu(x)
\end{align*}
is absolutely convergent, whence the limit
\begin{align*}
\lim_{T \rightarrow + \infty} \sum_{x \in X \cap [-T,T]} f_u(x) - \E \sum_{x \in X \cap [-T,T]} uf_1(x) 
\end{align*}
exists in $L^1(\Conf(E),\P)$. As a consequence of the convergence (\ref{cvl1fu}), there exists a Borel set $\mathcal{W} \subset \Conf(E)$ satisfying $\P(\mathcal{W})=1$ and a sequence $(T_N)_{N \in \N}$ such that
the limits
\begin{align} \label{cv:psfu}
\lim_{N \rightarrow + \infty} &\sum_{x \in X \cap [-T_N,T_N]} f_u(x) - \E_\P \sum_{x \in X \cap [-T_N,T_N]} f_u(x) \\ \label{cv:psf1}
\lim_{N \rightarrow + \infty} &\sum_{x \in X \cap [-T_N,T_N]} f_u(x) - \E_\P \sum_{x \in X \cap [-T_N,T_N]} uf_1(x)
\end{align}
exist for all $X \in \mathcal{W}$. It remains to prove that the convergence in (\ref{cv:psfu}) and (\ref{cv:psf1}) are uniform in $u$ on compact sets. To this end, we define
\begin{align*}
S(X,u;T_N)&= \sum_{x \in X \cap [-T_N,T_N]} f_u(x)- u f_1(x), \\
H(X;T_N) &= \sum_{x \in X \cap [-T_N, T_N]} f_1(x) - \E_\P \sum_{x \in X \cap[-T_N,T_N]} f_1(x) , \\
I(u;T_N)&= \E_\P  \sum_{x \in X \cap [-T_N,T_N]} ( uf_1(x) - f_u(x) )=\int_{-T_N}^{T_N} (uf_1(x) - f_u(x))K(x,x)d\mu(x),
\end{align*}
and write
\begin{align} \label{eq:sumfudecompo}
\sum_{x \in X \cap [-T_N,T_N]} f_u(x) - \E_\P \sum_{x \in X \cap [-T_N,T_N]} f_u(x) &= S(X,u;T_N) + uH(X;T_N) + I(u;T_N), \\ \label{eq:sumf1decompo}
\sum_{x \in X \cap [-T_N,T_N]} f_u(x) - \E_\P \sum_{x \in X \cap [-T_N,T_N]} uf_1(x) &= S(X,u;T_N) + uH(X;T_N).
\end{align}
From (\ref{cond:h1}) and (\ref{cond:h2}), the sum $S(X,u;T_N)$ is absolutely convergent for all $X \in \mathcal{W}$, uniformly in $u$ on compact sets. The integral $I(u,T_N)$ is also absolutely convergent, uniformly in $u$ on compact sets. Since the left-hand-side of (\ref{eq:sumfudecompo}) (or (\ref{eq:sumf1decompo})) converges as $N \rightarrow + \infty$ for all $X \in \mathcal{W}$, we obtain that $H(X;T_N)$ converges  as $N \rightarrow + \infty$, for all $X \in \mathcal{W}$. Thus, the convergence in (\ref{cv:psfu}) and (\ref{cv:psf1}) are uniform in $u$ on compact sets, and the Lemma is proved.
\end{proof}
\begin{lem} \label{lem:cvSX} For $\P$-almost every $X \in \Conf(E)$, the radius of convergence of the series
\begin{align*}
S_X^\rho(u):= \sum_{k \geq 1} \frac{\rho(\p_k)}{k}u^k
\end{align*}
and
\begin{align*}
S_X^{\tilde{\rho}}(u):= \sum_{k \geq 1} \frac{\tilde{\rho}(\p_k)}{k}u^k
\end{align*}
is at least $R$.
\end{lem}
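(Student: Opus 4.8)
The plan is to recognize $S_X^\rho(u)$ and $S_X^{\tilde{\rho}}(u)$ as the Taylor series of the limit functions already produced in Lemma \ref{lem:cvunif}, and then to read off the radius of convergence from the analyticity of those limits. Fix the full-measure Borel set $\mathcal{W} \subset \Conf(E)$ of Lemma \ref{lem:cvunif}, and for $X \in \mathcal{W}$ consider the two limit functions
\[
\Phi_X(u) = \lim_{N \to +\infty} \left( \sum_{x \in X \cap [-T_N,T_N]} f_u(x) - \E_\P \sum_{x \in X \cap [-T_N,T_N]} f_u(x) \right),
\]
\[
\tilde{\Phi}_X(u) = \lim_{N \to +\infty} \left( \sum_{x \in X \cap [-T_N,T_N]} f_u(x) - \E_\P \sum_{x \in X \cap [-T_N,T_N]} u f_1(x) \right),
\]
whose existence, for every $r \in [0,R)$ along a suitable sequence $(T_N)$ and uniformly on $D(0,r)$, is exactly the content of \eqref{cv:psfu} and \eqref{cv:psf1}.

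First I would check that each approximant is analytic on $D(0,R)$. Condition \eqref{cond:cvf} forces the radius of convergence of $u \mapsto f_u(x) = \sum_{k \geq 1} k^{-1} f_k(x) u^k$ to be at least $R$ for $\mu$-almost every $x$, and, since the first correlation measure $K(x,x)\,d\mu(x)$ is absolutely continuous with respect to $\mu$, $\P$-almost surely no point of $X$ lies in the exceptional $\mu$-null set; hence the finite sum $\sum_{x \in X \cap [-T_N,T_N]} f_u(x)$ is analytic on $D(0,R)$. The subtracted expectation equals $\int_{-T_N}^{T_N} f_u(x) K(x,x)\,d\mu(x)$ (respectively $u \int_{-T_N}^{T_N} f_1(x) K(x,x)\,d\mu(x)$); since $|f_u(x)|$ is bounded on compact subsets of $D(0,R)$ uniformly in $x$ by \eqref{cond:cvf} and $\int_{-T_N}^{T_N} K(x,x)\,d\mu(x) < +\infty$, differentiation under the integral sign (or Morera together with Fubini) shows this term is also analytic on $D(0,R)$. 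Because the convergence in \eqref{cv:psfu} and \eqref{cv:psf1} is uniform on each $D(0,r)$ with $r<R$, the Weierstrass convergence theorem yields that $\Phi_X$ and $\tilde{\Phi}_X$ are analytic on $D(0,R)$ and that their Taylor coefficients are the limits of the Taylor coefficients of the approximants.

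It then remains to identify those coefficients. Expanding the $N$-th approximant for $\Phi_X$ in powers of $u$, the coefficient of $u^k$ is $k^{-1}\big(\sum_{x \in X \cap [-T_N,T_N]} f_k(x) - \E_\P \sum_{x \in X \cap [-T_N,T_N]} f_k(x)\big)$. For $k=1$ this converges to $\rho(\p_1)$ by the first part of Lemma \ref{lem:cvf}, while for $k \geq 2$ condition \eqref{cond:cvfk} and dominated convergence give convergence to $k^{-1}\rho(\p_k)$; by uniqueness of limits the $k$-th Taylor coefficient of $\Phi_X$ is $\rho(\p_k)/k$, so $\Phi_X(u) = S_X^\rho(u)$ on $D(0,R)$ and the radius of convergence of $S_X^\rho$ is at least $R$. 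The same computation for $\tilde{\Phi}_X$ gives coefficient of $u^1$ equal to $\rho(\p_1) = \tilde{\rho}(\p_1)$ and, for $k \geq 2$, coefficient $k^{-1}\sum_{x \in X \cap [-T_N,T_N]} f_k(x) \to k^{-1}\sum_{x \in X} f_k(x) = k^{-1}\tilde{\rho}(\p_k)$, whence $\tilde{\Phi}_X = S_X^{\tilde{\rho}}$ and the radius of convergence of $S_X^{\tilde{\rho}}$ is also at least $R$.

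The only genuinely delicate point is the interchange of the limit $N \to +\infty$ with extraction of Taylor coefficients, that is, the passage from the uniform almost-sure convergence of Lemma \ref{lem:cvunif} to convergence of the individual power-series coefficients; this is precisely where the uniformity on disks $D(0,r)$ supplied by \eqref{cv:psfu}--\eqref{cv:psf1}, rather than mere convergence of the series at a fixed $u$, is essential, since it is what licenses the Weierstrass and Cauchy-integral argument. Everything else is the routine verification of analyticity of the finite approximants together with the coefficientwise identification via Lemma \ref{lem:cvf} and the integrability hypotheses \eqref{cond:cvf} and \eqref{cond:cvfk}.
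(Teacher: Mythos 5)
Your proof is correct and follows essentially the same route as the paper: both arguments reduce the lemma to identifying $S_X^\rho$ and $S_X^{\tilde\rho}$ with the analytic limit functions produced in Lemma \ref{lem:cvunif}. The paper performs this identification by a one-line appeal to Fubini's theorem (interchanging $\sum_k$ with the compensated sum over $x$), whereas you justify it via uniform convergence on disks $D(0,r)$, the Weierstrass convergence theorem, and Cauchy's formula for the Taylor coefficients -- a somewhat more careful treatment of the same interchange, which is welcome given that the $k=1$ term is only a compensated (non-absolutely-convergent) sum.
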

\begin{proof}
By Fubini's Theorem, we have
\begin{align*}
S_X^\rho(u)=  \sum_{x \in X} f_u(x) - \E_\P \sum_{x \in X} f_u(x)
\end{align*}
and
\begin{align*}
S_X^{\tilde{\rho}}(u)= \sum_{x \in X} f_u(x) - \E_\P \sum_{x \in X} u f_1(x).
\end{align*}
Thus by Lemma \ref{lem:cvunif}, the functions $S_X^\rho$ and $S_X^{\tilde{\rho}}$ are analytic in $D(0,R)$, for $\P$-almost every $X \in \Conf(E)$. The Lemma is proved.
\end{proof}
In order to complete the proof of Proposition \ref{prop:l1}, it remains to prove that 
\begin{align*}
\exp\left( \sum_{i=1}^n S_X^\rho (w_i)-S_X^\rho (z_i) \right), \hspace{0.1cm} \exp\left( \sum_{i=1}^n S_X^{\tilde{\rho}} (w_i)-S_X^{\tilde{\rho}} (z_i) \right) \in L^1(\Conf(E), \P),
\end{align*}
for any $z_1,\dots,z_n,w_1,\dots,w_n \in D(0,R)$. To this aim, we use the following proposition coming from the regularization of multiplicative functionals performed in \cite{bufetovquasisymmetries}.
\begin{prop} \label{prop:regmultfunction} Let $g : E \rightarrow \C \setminus (-\infty,0]$ be a function such that $|g|$ is bounded away from zero and $+\infty$ and verifying 
\begin{align} \label{ineq:normg}
\int_E |g(x)-1|^2 K(x,x)d\mu(x) <+\infty.
\end{align}
Set
\begin{align*}
\overline{S}_{\log g} (X) = \sum_{x \in X} \log g(x) - \E_\P \sum_{x \in X} \log g(x).
\end{align*}
Then we have 
\begin{align*}
\exp \left( \overline{S}_{\log g} (X)\right) \in L^1(\Conf(E), \P).
\end{align*}
\end{prop}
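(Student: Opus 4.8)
The plan is to reduce the claim to a positive real function and then to control a Fredholm determinant obtained by truncation. Since $\log$ denotes the principal branch, one has $\mathrm{Re}\,\log g=\log|g|$, so that $|\exp(\overline{S}_{\log g}(X))|=\exp(\overline{S}_{\log|g|}(X))$; moreover $\bigl||g|-1\bigr|\le|g-1|$ shows that $|g|$ satisfies the same hypotheses as $g$. Hence it suffices to treat the case where $g>0$ is real, bounded away from $0$ and $+\infty$, with $\int_E|g-1|^2K(x,x)d\mu<+\infty$, and to prove $\E_\P[\exp(\overline{S}_{\log g})]<+\infty$. The bounds on $g$ give $|\log g|\le C|g-1|$, whence $\int_E|\log g|^2K(x,x)d\mu<+\infty$; by the variance estimate for additive functionals of a projection-kernel determinantal point process (which, using $K^2=K$, is dominated by $\int_E|\log g|^2K(x,x)d\mu$) together with \cite[Proposition 4.1]{bufetovquasisymmetries}, the centered functional $\overline{S}_{\log g}$ is a well-defined element of $L^2(\Conf(E),\P)$, realized as the $L^2$-limit of the truncations $\overline{S}^{(T)}_{\log g}(X):=\sum_{x\in X\cap[-T,T]}\log g(x)-\int_{-T}^T\log g(x)K(x,x)d\mu(x)$. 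In particular $\exp(\overline{S}^{(T_N)}_{\log g})\to\exp(\overline{S}_{\log g})$ $\P$-almost surely along a suitable subsequence $T_N\to+\infty$.

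The \emph{main step} is to bound $\E_\P[\exp(\overline{S}^{(T)}_{\log g})]$ uniformly in $T$. Writing $B_T=[-T,T]$ and applying the multiplicative-functional formula of Definition \ref{def:det2} to the compactly supported perturbation $\chi_{B_T}(g-1)$, we obtain
\[
\E_\P[\exp(\overline{S}^{(T)}_{\log g})]=\exp\left(-\int_{B_T}\log g\,K(x,x)d\mu\right)\det\bigl(I+\chi_{B_T}K(g-1)\bigr).
\]
The operator $A_T:=\chi_{B_T}K(g-1)$ is trace class, so we may factor the Fredholm determinant through the Hilbert--Schmidt-regularized determinant, $\det(I+A_T)=\det{}_2(I+A_T)\exp(\mathrm{Tr}\,A_T)$, where $\mathrm{Tr}\,A_T=\int_{B_T}K(x,x)(g(x)-1)d\mu$. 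Combining this trace with the centering constant yields the exponent $\int_{B_T}K(x,x)\bigl(g(x)-1-\log g(x)\bigr)d\mu$, so that
\[
\E_\P[\exp(\overline{S}^{(T)}_{\log g})]=\det{}_2(I+A_T)\exp\left(\int_{B_T}K(x,x)\bigl(g-1-\log g\bigr)d\mu\right).
\]

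It remains to control each factor uniformly in $T$. Since $g$ is bounded away from $0$ and $+\infty$, one has $0\le g-1-\log g\le C|g-1|^2$, so the exponent is dominated by $C\int_E|g-1|^2K(x,x)d\mu<+\infty$. For the regularized determinant we invoke the classical estimate $|\det{}_2(I+A)|\le\exp(\tfrac12\|A\|_{\mathrm{HS}}^2)$ together with the projection identity $\int_E|K(x,y)|^2d\mu(x)=K(y,y)$, which gives $\|A_T\|_{\mathrm{HS}}^2\le\int_E|g-1|^2K(y,y)d\mu(y)<+\infty$. Hence $\sup_T\E_\P[\exp(\overline{S}^{(T)}_{\log g})]<+\infty$ (indeed $A_T\to K(g-1)$ in Hilbert--Schmidt norm, so the right-hand side even converges). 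Fatou's lemma applied along the subsequence $T_N$ then yields $\E_\P[\exp(\overline{S}_{\log g})]\le\liminf_N\E_\P[\exp(\overline{S}^{(T_N)}_{\log g})]<+\infty$, which is the desired conclusion.

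The step I expect to be most delicate is the passage from the compactly supported truncations to the full, non-integrable perturbation $g-1$: one must check that the exponential-of-trace part of each Fredholm determinant exactly absorbs the (possibly divergent) centering constant $\int_{B_T}\log g\,K(x,x)d\mu$, leaving the genuinely convergent quantity $\int_E K(g-1-\log g)d\mu$, while the $\det{}_2$ factor stays bounded in Hilbert--Schmidt norm. The positivity $g-1-\log g\ge0$ and the quadratic bound $g-1-\log g=O(|g-1|^2)$ are precisely what make this cancellation produce a finite, $T$-uniform estimate.
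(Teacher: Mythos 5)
Your proof is correct, and its overall skeleton coincides with the paper's: reduce to the positive function $|g|$ via $\mathrm{Re}\log g=\log|g|$ and $\bigl||g|-1\bigr|\le|g-1|$, truncate to $[-T,T]$, bound the expectation of the truncated centered multiplicative functional uniformly in $T$ by $\exp\bigl(C\int_E|g-1|^2K(x,x)d\mu\bigr)$, and conclude by Fatou. The difference lies in how the uniform bound on $\E_\P[\exp(\overline{S}^{(T)}_{\log g})]$ is obtained. The paper (adapting Proposition 4.6 of \cite{bufetovquasisymmetries}) uses only scalar inequalities: $\det(I+A)\le\exp(\mathrm{Tr}\,A)$ from $1+u\le e^u$ applied to the eigenvalues, combined with the pointwise bound $|\log u+1-u|\le C(u-1)^2$ to compare $\E_\P\sum\log|g_T|$ with $\int(|g_T|-1)K\,d\mu$; subtracting the two estimates produces the quadratic bound. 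You instead factor the Fredholm determinant through the Carleman determinant, $\det(I+A_T)=\det_2(I+A_T)\exp(\mathrm{Tr}\,A_T)$, observe that the trace exactly cancels the centering up to the convergent integral $\int K(x,x)(g-1-\log g)\,d\mu$, and control $\det_2$ by the classical estimate $|\det_2(I+A)|\le\exp(\tfrac12\|A\|_{\mathrm{HS}}^2)$ together with the reproducing identity $\int_E|K(x,y)|^2d\mu(x)=K(y,y)$. Both routes land on the same constant-times-$\int|g-1|^2K\,d\mu$ bound; yours makes the role of the projection hypothesis ($K^2=K=K^*$) more explicit through the Hilbert--Schmidt norm and avoids any discussion of the eigenvalues of the non-self-adjoint operator $\chi_{B_T}K(g-1)$, at the price of invoking the regularized-determinant machinery from \cite{simontrace}. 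The only cosmetic caveat is that your operator $A_T$ should carry the cutoff $\chi_{B_T}$ on both sides (its kernel is $\chi_{B_T}(x)K(x,y)(g(y)-1)\chi_{B_T}(y)$, as Definition \ref{def:det2} applied to $g_T$ produces), which changes nothing in the estimates.
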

\begin{proof}The existence of $\overline{S}_{\log g} (X)$ follows from condition (\ref{ineq:normg}) and \cite[Proposition 4.1]{bufetovquasisymmetries}. In the sequel, we slightly adapt the argument of \cite{bufetovquasisymmetries},  Proposition 4.6. For $T>0$, we define 
\begin{align*}
g_T(x):= \chi_{[-T,T]}(x)g(x) + \chi_{[-T,T]^c}(x).
\end{align*}
By Fatou's lemma, we have 
\begin{align} \label{ineq:fatou}
\E_\P  \left| \exp  \left( \overline{S}_{\log g} (X) \right)  \right| \leq \liminf_{T \rightarrow + \infty}  \E_\P \left| \exp \left( \overline{S}_{\log g_T} (X) \right) \right|.
\end{align}
Fix $T >0$. Since $g_T-1$ has compact support, both quantities $ \sum_{x \in X} \log g_T(x)$ and $\E_\P \sum_{x \in X} \log g_T(x) $ are well defined. Also observe that 
\begin{multline*}
\left| \exp \left( \overline{S}_{\log g_T} (X) \right) \right| = \exp \left( \Re \sum_{x \in X} \log g_T(x) - \Re \E_\P \sum_{x \in X} \log g_T(x) \right) \\
= \exp \left( \sum_{x \in X} \log |g_T(x)| - \E_\P \sum_{x \in X} \log |g_T(x)| \right) = \exp \left( \overline{S}_{\log |g_T|} (X) \right).
\end{multline*}
Now, we have 
\begin{equation*}
 \E_\P \prod_{x \in X} |g_T(x)| =  \det \left( I + \chi_{[-T,T]} K (|g_T| -1) \right) \leq  \exp  \text{Tr} \left( \chi_{[-T,T]}  K (|g_T|-1) \right),
\end{equation*}
where we used the inequality $1+ u \leq e^u$. We obtain 
\begin{align} \label{ineq:prop1-1}
\log \E_\P \prod_{x \in X} |g_T|(x) \leq \text{Tr} \left( \chi_{[-T,T]} K (|g_T| -1) \right) 
= \int_E (|g_T(x)|-1) K(x,x) d\mu(x).
\end{align}
Since $|g_T|$ is bounded away from zero and $+\infty$, and since $|\log(u)+1-u|/(u-1)^2$ is bounded away from zero and $+\infty$ as long as $u$ is, we have 
\begin{align*}
\left| \int_E (|g_T(x)|-1)K(x,x) d\mu(x) - \int_E \log |g_T(x)| K(x,x) d\mu(x) \right| \leq C \int_E |g_T(x)-1|^2 K(x,x) d\mu(x),
\end{align*}
whence
\begin{multline}\label{ineq:prop1-2}
\E_\P \sum_{x \in X} \log |g_T(x)| = 
\int_E \log |g_T(x)| K(x,x) d\mu(x) \geq \\ \geq \int_E (|g_T(x)|-1)K(x,x)d\mu(x) -C \int_E |g_T(x)-1|^2 K(x,x)d\mu(x).
\end{multline}
Summing the inequalities (\ref{ineq:prop1-1}) and (\ref{ineq:prop1-2}), we obtain 
\begin{multline*}
\log \E_\P \left| \exp \left( \overline{S}_{\log g_T} (X) \right) \right| = \log \E_\P \prod_{x \in X} |g_T(x)| - \E_\P \sum_{x \in X} \log |g_T(x)| \\
\leq C \int_E |g_T(x)-1|^2 K(x,x) d\mu(x) 
\leq C\int_E |g(x)-1|^2 K(x,x) d\mu(x).
\end{multline*}
Recall the inequalities (\ref{ineq:fatou}) and (\ref{ineq:normg}), and the proof is complete.
\end{proof}
We now conclude the proof of Proposition \ref{prop:l1}. Applying the latter Proposition \ref{prop:regmultfunction} to 
\begin{align*}
g(x)= \exp \left( \sum_{i=1}^n f_{w_i}(x)- f_{z_i}(x) \right),
\end{align*}
we obtain that
\begin{align*}
\exp\left( \sum_{i=1}^n S_X^\rho (w_i)-S_X^\rho (z_i) \right) \in L^1(\Conf(E),\P).
\end{align*}
Observing that
\begin{multline*}
\exp\left( \sum_{i=1}^n S_X^{\tilde{\rho}} (w_i)-S_X^{\tilde{\rho}} (z_i) \right)\\
= \exp \left( \sum_{i=1}^{n} \left\{\int_E (f_{w_i}(x)-w_i f_1(x))K(x,x)d\mu(x)  -  \int_E (f_{z_i}(x)-z_if(x)) K(x,x)d\mu(x)\right\}\right)\\
\times \exp\left( \sum_{i=1}^n S_X^\rho (w_i)-S_X^\rho (z_i) \right),
\end{multline*}
we obtain
\begin{align*}
\exp\left( \sum_{i=1}^n S_X^{\tilde{\rho}} (w_i)-S_X^{\tilde{\rho}} (z_i) \right) \in L^1(\Conf(E),\P).
\end{align*}
The proof is complete.
\subsection{A sufficient condition for Giambelli compatibility} \label{sec:giambelli3}
In this section, we prove that in the above-defined context, the Giambelli compatibility of the conditional point processes implies the Giambelli compatibility of the whole point process. Consider a rigid determinantal point process $\P$ on $(E,\mu)$ with a correlation kernel $K$ being the kernel of an orthogonal projection. Let $(f_k)_{k \geq 1}$ be a sequence of functions satisfying conditions (\ref{cond:cvf}), (\ref{cond:cvf1}), (\ref{cond:cvfk}),  (\ref{cond:cvexpf}) and such that there exists a function $h:E \rightarrow [0,+ \infty)$ satisfying (\ref{cond:h1}) and (\ref{cond:h2}). Let $\rho$ and $\tilde{\rho}$ be the $R$-uniform $L^1$-specializations defined by
\begin{align*}
\rho(\p_k)(X)  &=\sum_{x \in X} f_k(x) - \E_\P \sum_{x \in X} f_k(x), \quad X \in \Conf(E), \quad k=1,2,\dots \\
\tilde{\rho}(\p_1)(X) &= \rho(\p_1), \quad \tilde{\rho}(\p_k)(X) = \sum_{x \in X} f_k(x), \quad X \in \Conf(E), \quad k\geq 2,\dots
\end{align*}
For a finite configuration $X \in \Conf(E)$, we set 
\begin{align*}
\rho_0(\p_k)(X)= \sum_{x \in X} f_k(x).
\end{align*}
The main lemma of this section is the following.
\begin{lem} \label{lem:mainlem} Assume that the tail sigma-algebra $\mathcal{F}_{\infty}$ is trivial. If, for $\P$-almost every $X \in \Conf(E)$ and all $T >0$ the pair $(\P( \cdot | X, T), \rho_0)$ is Giambelli compatible, then $(\P, \rho)$ and $(\P,\tilde{\rho})$ are Giambelli compatible.
\end{lem}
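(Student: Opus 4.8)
The plan is to prove that the pair $(\P,\rho)$ satisfies the determinantal identity (\ref{eq:propgiambelli'}); by Proposition \ref{prop:propgiambelli} this is equivalent to Giambelli compatibility. Fix $n\in\N$ and pairwise distinct $z_1,\dots,z_n,w_1,\dots,w_n\in D(0,R)$, and set $\Phi(X)=\exp\big(\sum_{i=1}^n S_X^\rho(w_i)-S_X^\rho(z_i)\big)$ together with $G_{ij}(X)=\frac{1}{z_i-w_j}\exp\big(S_X^\rho(w_j)-S_X^\rho(z_i)\big)$; these lie in $L^1(\Conf(E),\P)$ by Proposition \ref{prop:l1}. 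The idea is to condition on $\mathcal{F}_T^\P$, establish a conditional analogue of (\ref{eq:propgiambelli'}) for every $T>0$, and then pass to the limit $T\to+\infty$. Since $\Phi$ and the $G_{ij}$ are integrable and $(\mathcal{F}_T^\P)_{T>0}$ is a reverse filtration whose tail $\mathcal{F}_\infty^\P$ is trivial by assumption, the reverse martingale convergence theorem yields $\E_\P[\,\cdot\mid\mathcal{F}_T^\P]\to\E_\P[\,\cdot\mid\mathcal{F}_\infty^\P]=\E_\P[\,\cdot\,]$ almost surely and in $L^1$ as $T\to+\infty$.

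The heart of the argument is the conditional identity. For fixed $T>0$, the additive decomposition furnished by Lemma \ref{lem:cvunif} lets one write $S_X^\rho(u)=\sum_{x\in X\cap[-T,T]}f_u(x)+c_T(X,u)$, where the remainder $c_T(X,u)$ is $\mathcal{F}_T^\P$-measurable, being the regularized sum over the configuration outside $[-T,T]$ plus the deterministic centering. Writing $Y=X\cap[-T,T]$, the inside term equals $S_Y^{\rho_0}(u)$, so $\Phi$ factors as an $\mathcal{F}_T^\P$-measurable factor times a function of $Y$, and the disintegration defining $\P(\cdot\mid X,T)$ gives
\[
\E_\P[\Phi\mid\mathcal{F}_T^\P](X)=\exp\!\Big(\sum_{i=1}^n c_T(X,w_i)-c_T(X,z_i)\Big)\,\E_{\P(\cdot\mid X,T)}\Big[\exp\Big(\sum_{i=1}^n S_Y^{\rho_0}(w_i)-S_Y^{\rho_0}(z_i)\Big)\Big].
\]
Because $[-T,T]\cap E$ is relatively compact, $\P(\cdot\mid X,T)$ is supported on finite configurations, whence $\rho_0$ is automatically an $R$-uniform $L^1$-specialization for it. As $(\P(\cdot\mid X,T),\rho_0)$ is Giambelli compatible by hypothesis, Proposition \ref{prop:propgiambelli} applies to the conditional process and expresses the last expectation as $\det\big(\tfrac{1}{z_i-w_j}\big)^{-1}$ times the determinant of $\big(\tfrac{1}{z_i-w_j}\E_{\P(\cdot\mid X,T)}[\exp(S_Y^{\rho_0}(w_j)-S_Y^{\rho_0}(z_i))]\big)_{i,j}$.

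It then remains to reincorporate the $\mathcal{F}_T^\P$-measurable prefactor. Splitting $\exp\big(\sum_i c_T(X,w_i)-c_T(X,z_i)\big)=\prod_i\exp(c_T(X,w_i))\exp(-c_T(X,z_i))$ and using multilinearity of the determinant to multiply the $j$-th column by $\exp(c_T(X,w_j))$ and the $i$-th row by $\exp(-c_T(X,z_i))$ — which is exactly the shift mechanism of Proposition \ref{prop:shift} applied conditionally on $\mathcal{F}_T^\P$ — each entry becomes $\frac{1}{z_i-w_j}\E_\P[\exp(S_X^\rho(w_j)-S_X^\rho(z_i))\mid\mathcal{F}_T^\P](X)=\E_\P[G_{ij}\mid\mathcal{F}_T^\P](X)$. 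This yields, for $\P$-almost every $X$ and every $T>0$,
\[
\E_\P[\Phi\mid\mathcal{F}_T^\P](X)=\det\Big(\tfrac{1}{z_i-w_j}\Big)^{-1}_{1\le i,j\le n}\det\big(\E_\P[G_{ij}\mid\mathcal{F}_T^\P](X)\big)_{i,j=1}^n.
\]
Letting $T\to+\infty$, the left-hand side converges to the constant $\E_\P[\Phi]$, while by continuity of the determinant the right-hand side converges to $\det\big(\tfrac{1}{z_i-w_j}\big)^{-1}\det\big(\E_\P[G_{ij}]\big)$; equating the two limits gives (\ref{eq:propgiambelli'}) for $\rho$, hence $(\P,\rho)$ is Giambelli compatible. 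The specialization $\tilde\rho$ is handled identically, since only the centering entering $c_T$ changes and it remains $\mathcal{F}_T^\P$-measurable; alternatively, as $\tilde\rho$ differs from $\rho$ by a deterministic shift of the power sums and is $R$-uniform $L^1$ by Proposition \ref{prop:l1}, Proposition \ref{prop:shift} transfers Giambelli compatibility from $\rho$ to $\tilde\rho$. I expect the main obstacle to be the conditional identity itself: one must verify rigorously that the regularized outside sum $c_T(X,u)$ is genuinely $\mathcal{F}_T^\P$-measurable and that the conditional expectation factorizes through the disintegration as stated, both resting on the $L^1$-convergence and the additive decomposition of Lemma \ref{lem:cvunif}, and one must justify interchanging the limit $T\to+\infty$ with the determinant and the expectation using the integrability supplied by Proposition \ref{prop:l1}.
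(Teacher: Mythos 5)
Your proposal follows essentially the same route as the paper: decompose $S_X^\rho(u)$ into the sum over $X\cap[-T,T]$ plus an $\mathcal{F}_T^\P$-measurable remainder, use the shift-invariance of Proposition \ref{prop:shift} to turn the hypothesis on $(\P(\cdot|X,T),\rho_0)$ into the conditional determinantal identity for $S_X^\rho$, and then let $T\to+\infty$ via reverse martingale convergence and tail triviality. Two points deserve attention. First, your justification that $\rho_0$ is automatically a uniform $L^1$-specialization for $\P(\cdot|X,T)$ ``because the conditional process is supported on finite configurations'' is not sufficient: the definition requires the integrability condition \eqref{cond:unifl1}, and if the number of particles in $[-T,T]$ were unbounded under the conditional measure, the exponential $\exp\bigl(\sum_i S_Y^{\rho_0}(w_i)-S_Y^{\rho_0}(z_i)\bigr)$ need not be integrable, since it is only controlled by $e^{C\#(Y)}$. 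The paper uses rigidity precisely here: the conditioned process has a \emph{fixed} number of particles, so Lemma \ref{lem:finite} gives an $L^\infty$ bound and hence the uniform $L^1$ property. Second, the issue you flag as ``the main obstacle'' --- the well-definedness and $\mathcal{F}_T^\P$-measurability of the regularized outside term $c_T(X,u)$, and the passage of the Giambelli identity through this regularization --- is exactly what the paper resolves by introducing the intermediate truncation $T_N$: the shift sequence $a_k^{T,N}$ is defined at finite level (a finite sum over $X\cap([-T,T]^c\cap[-T_N,T_N])$ plus a convergent integral), is bounded, Lemma \ref{lem:finite} and Proposition \ref{prop:shift} are applied at that finite level, and only then is the limit $N\to+\infty$ taken in the resulting identity using Lemmas \ref{lem:cvf}, \ref{lem:cvunif} and \ref{lem:cvSX}. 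With these two repairs --- both available from the standing assumptions of the section --- your argument coincides with the paper's proof; your alternative remark that $\tilde\rho$ could be obtained from $\rho$ by a deterministic shift is a reasonable shortcut, but one should check that the shift sequence $\bigl(\E_\P\sum_{x\in X}f_k(x)\bigr)_{k\ge 2}$ is bounded as Proposition \ref{prop:shift} requires, whereas the paper simply reruns the argument with the modified centering.
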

We first need the following auxiliary result.
\begin{lem} \label{lem:finite} Let $N \in \N$ and let $\mathbb{P}_N$ be an $N$-point process, i.e. such that
\begin{align*}
\mathbb{P}_N( \#(X) = N) = 1.
\end{align*}
Then for any bounded sequence $a=(a_k)_{k \geq 1} \subset \C$, the specialization $\rho_a$ defined by
\begin{align} \label{eq:defrho0}
\rho_a(\p_k)(X) = \sum_{x \in X} f_k(x)-a_k, \quad k=1,2,\dots
\end{align}
is a $\min(R, 1)$-uniform $L^1$-specialization.
\end{lem}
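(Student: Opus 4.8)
The plan is to verify, one at a time, the two defining requirements of a $\min(R,1)$-uniform $L^1$-specialization, taking full advantage of the hypothesis that $\mathbb{P}_N$ is an $N$-point process: every configuration $X$ carries exactly $N$ points, so each sum $\sum_{x \in X} f_k(x)$ is a genuine \emph{finite} sum and all the convergence and regularization difficulties faced in Proposition \ref{prop:l1} disappear. As a preliminary remark, since each $f_k$ is bounded and $X$ is finite, every $\rho_a(\p_k)$ is a bounded Borel function on $\Conf(E)$; because the power sums $\p_k$ freely generate $\Lambda$ over $\C$, the assignment extends to a unique algebra homomorphism, and its image consists of bounded functions, so $\rho_a$ is automatically an $L^1(\Conf(E), \mathbb{P}_N)$-specialization.

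First I would bound the radius of convergence of the series $S_X^{\rho_a}(u)$ from (\ref{eq:defS}). Writing $\rho_a(\p_k)(X) = \sum_{x \in X} f_k(x) - a_k$ and interchanging the finite sum over $x \in X$ with the power series in $u$ — which is legitimate precisely because $X$ is finite — gives
\[
S_X^{\rho_a}(u) = \sum_{x \in X} f_u(x) - \sum_{k \geq 1} \frac{a_k}{k} u^k,
\qquad f_u(x) = \sum_{k \geq 1} \frac{f_k(x)}{k} u^k .
\]
Condition (\ref{cond:cvf}) furnishes a constant $C$ with $|f_k(x)| \leq C R^{-k}$, so each $f_u(x)$, and hence the finite sum $\sum_{x \in X} f_u(x)$, has radius of convergence at least $R$; the boundedness of $(a_k)$ makes the second series converge for $|u| < 1$. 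Thus $S_X^{\rho_a}(u)$ is analytic on $D(0,\min(R,1))$, which is the required radius.

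Next I would verify the integrability condition (\ref{cond:unifl1}). For any $n \in \N$ and any $z_1,\dots,z_n,w_1,\dots,w_n \in D(0,\min(R,1))$,
\[
\sum_{i=1}^n \left( S_X^{\rho_a}(w_i) - S_X^{\rho_a}(z_i) \right) = \sum_{i=1}^n \sum_{x \in X} \left( f_{w_i}(x) - f_{z_i}(x) \right) - \sum_{i=1}^n \sum_{k \geq 1} \frac{a_k}{k}\left( w_i^k - z_i^k \right),
\]
where the last double sum is a deterministic constant and contributes only a multiplicative factor to the exponential. The estimate $|f_k(x)| \leq C R^{-k}$ yields $|f_u(x)| \leq -C \log(1 - |u|/R)$, uniformly in $x$ and finite for $|u| \leq r < R$; hence the configurational exponent $\sum_{i=1}^n \sum_{x \in X}(f_{w_i}(x) - f_{z_i}(x))$ is bounded by a constant depending only on $n$, $N$, and $r$. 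Consequently $\exp\left( \sum_{i=1}^n S_X^{\rho_a}(w_i) - S_X^{\rho_a}(z_i) \right)$ is a bounded random variable, and therefore lies in $L^1(\Conf(E), \mathbb{P}_N)$ because $\mathbb{P}_N$ is a probability measure.

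I do not expect any genuine obstacle here: the finiteness of $N$-point configurations collapses every estimate to an elementary bound, so that, in contrast with Propositions \ref{prop:l1} and \ref{prop:regmultfunction}, no variance estimate nor any regularization of multiplicative functionals is needed. The only points deserving care are the extraction of the uniform-in-$x$ estimate on $f_u$ from (\ref{cond:cvf}) and the bookkeeping of the two competing radii — namely $R$, coming from the configurational part, and $1$, coming from the bounded shift sequence $(a_k)$ — whose minimum is exactly the radius asserted in the statement.
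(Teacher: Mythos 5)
Your proposal is correct and follows essentially the same route as the paper: both exploit that $\#(X)=N$ almost surely to get the elementary bound $|\rho_a(\p_k)(X)| \leq C N R^{-k} + \sup_k |a_k|$, deduce convergence of $S_X^{\rho_a}$ on $D(0,\min(R,1))$, and conclude that the exponential in \eqref{cond:unifl1} is a bounded (hence integrable) random variable. The only cosmetic difference is that you separate the configurational part from the deterministic shift $(a_k)$ before estimating, whereas the paper bounds $\rho_a(\p_k)$ in one step; the content is identical.
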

\begin{proof}
As $\#(X)=N$ for $\mathbb{P}_N$-almost every $X \in \Conf(E)$, we clearly have from condition (\ref{cond:cvfk}) that
\begin{align*}
|\rho_a(\p_k)(X)| \leq N ||f_k||_{L^\infty(E,d\mu)}  + ||a||_{\infty} \leq CNR^{-k} + ||a||_{\infty},
\end{align*}
for $\mathbb{P}_N$-a.e. $X \in \Conf(E)$, where $||a||_\infty = \sup_{k \geq 1} |a_k|$. The series 
\begin{align*}
S_X^{\rho_a}(u)= \sum_{k \geq 1} \frac{\rho_a(\p_k)}{k}u^k
\end{align*}
converges in the disk $D(0,\min(R,1))$ and for $z_1, \dots , z_n, w_1,\dots, w_n \in D(0, \min(R, 1))$, the random variable
\begin{align*}
\exp \left( \sum_{i=1}^n S_X(w_i)- S_X(z_i) \right)
\end{align*}
belongs to $L^\infty(\Conf(E), \mathbb{P}_N)$. The proof is complete.
\end{proof}
We now prove Lemma \ref{lem:mainlem} above.
\begin{proof}[Proof of Lemma \ref{lem:mainlem}]
We only prove that the pair $(\P,\rho)$ is Giambelli compatible, the proof of the Giambelli compatibility of the pair $(\P, \tilde{\rho})$ being similar. Let $T >0$ and let $(T_N)_{N \in \N}$ be an increasing sequence tending to $+ \infty$ as $N \rightarrow + \infty$ such that the limits
\begin{align*}
\lim_{N \rightarrow + \infty} &\sum_{x \in X\cap [-T_N,T_N]} f_k(x) - \E_\P \sum_{x \in X \cap [-T_N,
T_N]} f_k(x), \quad k=1,2,\dots
\end{align*}
exist $\P$-almost surely and in $L^1(\Conf(E),\P)$. Let $N \in \N$ be such that $T_N \geq T$. Set 
\begin{align*}
a_k^{T,N}=  \E_\P \sum_{ x \in X \cap [-T_N,T_N]} f_k(x)- \sum_{x \in X \cap (E \setminus [-T,T]) \cap [-T_N,T_N]} f_k(x), \quad k=1,2,\dots
\end{align*}
The sequence $a^{T,N}=(a_k^{T,N})_{k \geq 1}$ is bounded. Since $\P$ is rigid, the conditioned point process $\P( \cdot | X,T)$ has a fixed number of particles, and by Lemma \ref{lem:finite}, the specialization $\rho_{a^{T,N}}$ is a uniform $L^1$-specialization for $\P(\cdot |X,T)$. By Proposition \ref{prop:shift}, the pair $(\P( \cdot | X,T),\rho_{a^{T,N}})$ is Giambelli compatible and we have 
\begin{multline} \label{eq:lem}
\E_\P \left[ \left. \exp \left( \sum_{i=1}^n S_{X \cap [-T_N,T_N]}^{\rho_{a^{T,N}}}(w_i)- S_{ X \cap [-T_N,T_N]}^{\rho_{a^{T,N}}}(z_i) \right) \right|X,T \right] \\
= \det \left( \frac{1}{z_i-w_j} \right)_{1 \leq i,j \leq n}^{-1} \det \left( \frac{1}{z_i-w_j} \E_\P \left[ \exp \left( S_{X \cap [-T_N,T_N]}^{\rho_{a^{T,N}}}(w_j) - S_{ X \cap [-T_N,T_N]}^{\rho_{a^{T,N}}}(z_i) \right) |X,T \right] \right)_{i,j=1}^n.
\end{multline}
As $N \rightarrow + \infty$, we have from Lemmas \ref{lem:cvf}, \ref{lem:cvunif} and \ref{lem:cvSX} that
\begin{align*}
S_{X \cap [-T_N,T_N]}^{\rho_{a^{T,N}}}(u) \rightarrow S_X^\rho(u)
\end{align*}
$\P$-almost surely and in $L^1(\Conf(E),\P)$. By Proposition \ref{prop:l1}, we can pass to the limit as $N \rightarrow + \infty$ in (\ref{eq:lem}) and we obtain
\begin{multline} \label{eq:lem2}
\E_\P \left[ \left. \exp \left( \sum_{i=1}^n S_X^{\rho}(w_i)- S_X^{\rho}(z_i) \right) \right| X,T \right] \\
= \det \left( \frac{1}{z_i-w_j} \right)_{1 \leq i,j \leq n}^{-1} \det \left( \frac{1}{z_i-w_j} \E_\P \left[ \exp \left( S_X^{\rho}(w_j) - S_X^{\rho}(z_i) \right) |X,T \right] \right)_{i,j=1}^n.
\end{multline}
Now observe that for any $F \in L^1(\Conf(E),\P)$, the sequence
\[(\E_\P [ F |X,T])_{T>0}\]
is a reversed martingale with respect to $(\mathcal{F}_T)_{T >0}$. Since the tail sigma-algebra is trivial, we have
\[\lim_{T \rightarrow + \infty} \E_\P [ F | X,T] = \E_\P [F] \]
$\P$-almost surely and in $L^1(\Conf(E),\P)$. Considering
\[ F(X) = \exp \left( \sum_{i=1}^n S_X^{\rho}(w_i)- S_X^{\rho_a}(z_i) \right) \]
in the left-hand-side of equation (\ref{eq:lem2}), and
\[ F(X) = \exp \left( S_X^{\rho}(w_j) - S_X^{\rho}(z_i) \right), \quad i,j=1,\dots,n, \]
for the right-hand-side, one can pass to the limit $T \rightarrow + \infty$ in (\ref{eq:lem2}) and we obtain equation (\ref{eq:propgiambelli'}). The lemma is proved.
\end{proof}

\section{Proof of Theorem \ref{thm1} and Theorem \ref{thm2}} \label{sec:proofthm}
\subsection{The specializations $\rho^R$ and $\tilde{\rho}^R$, the proof of Proposition \ref{prop:1}, and the equivalence of Theorems \ref{thm1} and \ref{thm2}} \label{sec:proof1}
Let $\P$ be a determinantal point process on $(E,\mu)$ with correlation kernel $K$ being the kernel of an orthogonal projection and satisfying condition (\ref{cond:hilbertschmidt}). Let $R >0$ and consider the sequence of functions $(f_k)_{k \geq 1} \subset L^\infty(E,d\mu)$ defined by 
\begin{align*}
f_k(x) := \frac{1}{(x+ i R)^k}, \quad k=1,2,\dots
\end{align*}
Inequality (\ref{cond:cvf}) is satisfied, and by condition (\ref{cond:hilbertschmidt}), the sequence $(f_k)_{k \geq 1}$ satisfies conditions (\ref{cond:cvf1}) and (\ref{cond:cvfk}). We now compute for $u \in D(0,R)$
\begin{align} \label{form:gf}
f_u(x):= \sum_{k \geq 1} \frac{f_k(x)}{k}u^k = \sum_{k \geq 1} \frac{u^k}{k(x+ i R)^k} = - \log \left(1- \frac{u}{x+ i R} \right).
\end{align}
We thus have
\begin{align*}
f_u(x) = uf_1(x) + O \left( \frac{1}{x^2} \right),
\end{align*}
as $|x| \rightarrow  + \infty$, uniformly in $u$ on compact sets, which by condition (\ref{cond:hilbertschmidt}) implies that conditions (\ref{cond:h1}) and (\ref{cond:h2}) are satisfied with $h(x)= (1+x^2)^{-1}$. Condition (\ref{cond:cvexpf}) follows from condition (\ref{cond:hilbertschmidt}) and formula (\ref{form:gf}). By Proposition \ref{prop:1}, the specializations $\rho^R$ and $\tilde{\rho}^R$ defined by
\begin{align*}
\rho^R(\p_k)(X) = \sum_{x \in X} f_k(x) - \E_\P \sum_{x \in X} f_k(x), \quad X \in \Conf(E),\quad k=1,2 \dots \\
\tilde{\rho}^R(\p_1)= \rho(\p_1), \quad \tilde{\rho}^R(\p_k)(X) = \sum_{x \in X} f_k(x), \quad X \in \Conf(E), \quad k \geq 2,
\end{align*}
are $R$-uniform $L^1$-specializations. In particular, the series
\begin{align*}
S_X^R(u) &:= \sum_{k =1}^{+\infty} \frac{\rho(\p_k(X))}{k}u^k = \sum_{x \in X} f_u(x) - \E_\P \sum_{x \in X} f_u(x) \\ 
\intertext{and} \\
\tilde{S}_X^R(u) &:= \sum_{k =1}^{+\infty} \frac{\tilde{\rho}(\p_k(X))}{k}u^k = \sum_{x \in X} f_u(x) - \E_\P  \sum_{x \in X} u f_1(x)
\end{align*}
converge in $D(0,R)$ for $\P$-almost every $X$ in $\Conf(E)$ and are analytic functions.

From (\ref{form:gf}), we deduce that the function $ u \mapsto -\log g_{u}(x)$ is an analytic continuation of the series $u \mapsto f_u(x)$ on $\C \setminus (\R+iR)$, where $g_{u}$ has been defined in (\ref{eq:defg}). Recalling the definitions of the random functions $\Psi_X(u)$ and $\tilde{\Psi}_X(u)$ in Proposition \ref{prop:1}, we have established the following
\begin{prop} \label{prop:analyticcont} For $\P$-almost every $ X \in \Conf(E)$, the function $\Psi_X(u)$ is an analytic continuation of  
\[\exp \left( -S_X^R(u + i R) \right)
\]
on $\C \setminus \R$, and the function $\tilde{\Psi}_X(u)$ is an analytic continuation of
\[\exp \left( -\tilde{S}_X^{R}(u + i R) \right)
\]
on $\C \setminus \R$
\end{prop}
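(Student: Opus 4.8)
The plan is to prove the pointwise identity $\Psi_X(u)=\exp\!\big(-S_X^R(u+iR)\big)$ on a small open set contained in $\C\setminus\R$, and then to promote it to the asserted analytic-continuation statement by invoking the analyticity furnished by Proposition \ref{prop:1}(iii). The starting observation is the one recorded in \eqref{form:gf}: for each $x$ the series $f_u(x)=\sum_{k\geq 1}u^k/\big(k(x+iR)^k\big)$ sums to $-\log g_u(x)$, so that $\exp(-f_u(x))=g_u(x)$ and, integrating against $K(x,x)\,d\mu(x)$, one has $-\int_{-T}^{T}\log g_{u+iR}(x)K(x,x)\,d\mu(x)=\int_{-T}^{T}f_{u+iR}(x)K(x,x)\,d\mu(x)$. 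The heart of the matter is that, after the shift $u\mapsto u+iR$, the pre-limit expression defining $\Psi_X(u)$ in \eqref{lim:prop11} coincides term by term with the exponential of the pre-limit expression representing $S_X^R$.

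Concretely, I would restrict to the open disk $D(-iR,R)=\{u:\,u+iR\in D(0,R)\}$, which contains no real point and hence lies entirely inside $\C\setminus\R$ (in the open lower half-plane). On $D(0,R)$ the series $S_X^R$ converges, and by Lemmas \ref{lem:cvf}, \ref{lem:cvunif} and \ref{lem:cvSX} it admits, for $\P$-almost every $X$, the representation
\[
S_X^R(v)=\lim_{N\to+\infty}\Big[\sum_{x\in X,\ |x|\leq T_N}f_v(x)-\int_{-T_N}^{T_N}f_v(x)K(x,x)\,d\mu(x)\Big]
\]
along a suitable sequence $T_N\uparrow+\infty$. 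Setting $v=u+iR$, exponentiating, and using $\exp(-f_{u+iR})=g_{u+iR}$ together with the identity relating the two regularizing integrals above, the $N$-th pre-limit term becomes exactly $\exp\!\big(-\int_{-T_N}^{T_N}\log g_{u+iR}K\big)\prod_{|x|\leq T_N}g_{u+iR}(x)$, which is precisely the pre-limit term in \eqref{lim:prop11}. Since both limits exist (Proposition \ref{prop:1}(i) for $\Psi_X$, Lemma \ref{lem:cvunif} for $S_X^R$) and the pre-limit sequences are identical, passing to a common subsequence along which both converge $\P$-almost surely yields $\Psi_X(u)=\exp\!\big(-S_X^R(u+iR)\big)$ for all $u\in D(-iR,R)$.

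Finally, by Proposition \ref{prop:1}(iii) the map $u\mapsto\Psi_X(u)$ is analytic on all of $\C\setminus\R$, whereas $u\mapsto\exp\!\big(-S_X^R(u+iR)\big)$ is analytic on $D(-iR,R)$ by Lemma \ref{lem:cvSX}; as the two analytic functions agree on the nonempty open set $D(-iR,R)$, uniqueness of analytic continuation identifies $\Psi_X$ as the analytic continuation of $\exp\!\big(-S_X^R(\cdot+iR)\big)$ to $\C\setminus\R$. The statement for $\tilde\Psi_X$ is obtained by the same scheme, the only change being that the full centering integral $\int f_{u+iR}K$ is replaced by the first-power-sum centering coming from $\tilde\rho(\p_1)$, which produces the prefactor appearing in \eqref{lim:prop12}. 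I expect the main delicate point to be the interchange of exponentiation and passage to the limit: because the limits in Proposition \ref{prop:1} and in Lemma \ref{lem:cvunif} are $L^1$ (and almost-sure only along subsequences), one must pass to a common subsequence of the cutoffs $T_N$ to legitimately conclude that the exponential of the limit equals the limit of the exponentials, and hence that the two $\P$-almost-sure limits of the one and the same pre-limit sequence must coincide.
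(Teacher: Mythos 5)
Your argument is correct and is essentially the paper's own proof: the paper merely observes from \eqref{form:gf} that $u\mapsto-\log g_u(x)$ analytically continues $u \mapsto f_u(x)$ and then ``recalls the definitions,'' and your write-up supplies exactly the details implicit in that remark (termwise identification of the pre-limit expressions on $D(-iR,R)$, passage to a common subsequence of cutoffs, and the identity theorem combined with Proposition \ref{prop:1}(iii) and Lemma \ref{lem:cvSX}). One caveat worth recording: carrying out your pre-limit matching for $\tilde\Psi_X$ actually yields the centering factor $\exp\left((u+iR)\int_{-T}^{T}\frac{K(x,x)}{x+iR}\,d\mu(x)\right)$ rather than the factor $\exp\left(-u\int_{-T}^{T}\frac{1}{x+iR}K(x,x)\,dx\right)$ appearing in \eqref{lim:prop12}, so your assertion that the scheme ``produces the prefactor appearing in \eqref{lim:prop12}'' does not hold verbatim and points to what seems to be a sign slip in the paper's definition rather than a flaw in your method.
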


Proposition \ref{prop:1} is now proved in the same way as Proposition \ref{prop:l1}: applying Proposition \ref{prop:regmultfunction} to the function
\begin{align*}
g(x)= \frac{g_{z_1+i R}(x)\dots g_{z_l+ i R}(x)}{g_{w_1 + i R}(x) \dots g_{w_{l'}+ i R}(x)},
\end{align*}
for any $l,l' \in \N$ and any $z_1,\dots,z_l,w_1,\dots, w_{l'} \in \C \setminus \R$, one obtains points $(i)$ and $(ii)$ of Proposition \ref{prop:1}. The point $(iii)$ is proved in the same way as Lemma \ref{lem:cvunif}.

Proposition \ref{prop:propgiambelli}  and Proposition \ref{prop:analyticcont} now imply the equivalence of Theorems \ref{thm1} and \ref{thm2}.
\subsection{The Giambelli compatibility of orthogonal polynomial ensembles} \label{sec:proof2}
For point processes supported on $N$-points configurations, the specialization $\rho_0^R$ is defined by 
\begin{align*}
\rho_0^R(\p_k)= \sum_{x \in X} f_k(x) =\sum_{x \in X} \frac{1}{(x + i R)^k}.
\end{align*}
Let $\P_N$ be an $N$-orthogonal polynomial ensemble on $E$ with weight $\omega$. We have the following Proposition, analogous to Theorem 3.1 in \cite{giambelli}.
\begin{prop}\label{prop:opegiambelli}The pair $(\P_N,\rho_0^R)$ is Giambelli compatible.
\end{prop}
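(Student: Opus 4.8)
The plan is to reduce the statement, via Proposition \ref{prop:propgiambelli}, to a Fyodorov--Strahov identity for the ensemble $\P_N$, and then to prove that identity by an orthogonal-polynomial computation in the spirit of Borodin--Olshanski--Strahov. First, since $\P_N$ charges only $N$-point configurations, Lemma \ref{lem:finite} (taken with $a=0$) shows that $\rho_0^R$ is a $\min(R,1)$-uniform $L^1$-specialization for $\P_N$. Thus Proposition \ref{prop:propgiambelli} applies, and $(\P_N,\rho_0^R)$ is Giambelli compatible if and only if the identity (\ref{eq:propgiambelli'}) (equivalently (\ref{eq:propgiambelli})) holds for all pairwise distinct $z_1,\dots,z_n,w_1,\dots,w_n \in D(0,\min(R,1))$.

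Next I would make the reduction explicit. Because $\rho_0^R$ is an algebra homomorphism with $\rho_0^R(\p_k)=\sum_{x\in X}(x+iR)^{-k}$, the factorizations (\ref{formula:genseriesHE}) give, for each configuration $X$,
\[ \rho_0^R(H)(w)\,\rho_0^R(E)(-z)=\prod_{x\in X}\frac{x+iR-z}{x+iR-w}=\frac{P(z)}{P(w)},\qquad P(s):=\prod_{x\in X}(s-x-iR). \]
Hence $\prod_{i=1}^n\rho_0^R(H)(w_i)\rho_0^R(E)(-z_i)=\prod_{i=1}^n P(z_i)/P(w_i)$, so (\ref{eq:propgiambelli}) is exactly the Fyodorov--Strahov identity (\ref{form:charpol}) with the characteristic polynomial replaced by the shifted polynomial $P$. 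It therefore suffices to prove (\ref{form:charpol}) for the orthogonal polynomial ensemble $\P_N$.

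To establish (\ref{form:charpol}) for $\P_N$, I would compute the average directly from the density $C_N\prod_{i<j}(x_i-x_j)^2\prod_i d\omega(x_i)$ using the monic orthogonal polynomials $(\pi_k)_{k\ge 0}$ of $\omega$. Expanding each factor $\prod_{x\in X}(z_a-x-iR)$ against the Vandermonde and treating the denominators $\prod_{x\in X}(w_b-x-iR)^{-1}$ by partial fractions, Andr\'eief's identity (already used in Proposition \ref{prop:normalizingconstant}) turns the average into a determinant whose entries involve the $\pi_k$ evaluated at the $z_a$ and the Cauchy transforms $\int \pi_k(x)(w_b-x-iR)^{-1}\,d\omega(x)$ at the $w_b$. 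The single-pair averages $\E_{\P_N}[P(z_a)/P(w_b)]$ are then read off as the corresponding Christoffel--Darboux combination, and the Cauchy double-alternant identity (the one recalled in the Remark following Theorem \ref{thm1}) rearranges the $n\times n$ determinant into the product of $\det(1/(z_i-w_j))^{-1}$ and $\det\big(\tfrac{1}{z_i-w_j}\E_{\P_N}[P(z_i)/P(w_j)]\big)$, i.e.\ into the right-hand side of (\ref{form:charpol}).

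The main obstacle will be precisely this final determinant evaluation: showing that the average of the full product of ratios collapses onto the single-pair averages with exactly the Cauchy-determinant prefactor, while controlling the poles at $w_b=x+iR$ and justifying the interchanges of integration, summation and partial-fraction expansion. An alternative, closer to Borodin--Olshanski--Strahov and exploiting the machinery already set up in the paper, is to introduce a doubly indexed family $\{\h_{r,s}\}$ with $\h_{0,s}=1$ and $\h_{r,s}=0$ for $r<0$, built from the moments and Cauchy transforms of the $\pi_k$, to show via the bialternant formula (\ref{def:schurpol}) and Andr\'eief's identity that $\E_{\P_N}[\rho_0^R(\s_\lambda)]$ has the determinantal shape of Proposition \ref{prop:giambelligeneral}, and then to invoke that proposition together with the hook expansion (\ref{form:usefull}); this route makes the cancellation of the normalizing constant $m_N$ automatic but requires the same careful index bookkeeping.
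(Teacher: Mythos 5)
Your second, ``alternative'' route is essentially the paper's own proof, and you have identified its ingredients correctly: Proposition \ref{prop:schur} (the bialternant form of $\rho_0^R(\s_\lambda)$), Andr\'eief's formula, the construction of a doubly indexed family $\h_{r,s}$ by column-reducing the moment matrix $\bigl(\int_E(x+iR)^{k-\lambda_k-1+N-l}\,d\omega(x)\bigr)$ to a lower triangular matrix with unit diagonal, and finally Proposition \ref{prop:giambelligeneral}. Two points worth making explicit if you follow it: the reduction requires the non-degeneracy of the $N\times N$ moment block, which is exactly what Proposition \ref{prop:normalizingconstant} supplies via $m_N(iR)=m_N=(N!C_N)^{-1}\neq 0$ (this is why that proposition is stated with the shift $a$); and note that the paper does \emph{not} pass through Proposition \ref{prop:propgiambelli} here at all --- it verifies Definition \ref{def:giambelli} directly, Schur function by Schur function, and the Fyodorov--Strahov form only enters later when Lemma \ref{lem:mainlem} is applied.

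Your primary route --- reduce via Proposition \ref{prop:propgiambelli} to the identity \eqref{form:charpol} for the shifted polynomial $P(s)=\prod_{x\in X}(s-x-iR)$ and prove that identity by orthogonal polynomials, Cauchy transforms and the Cauchy double alternant --- is genuinely different and is known to work (it is the Fyodorov--Strahov/Baik--Deift--Strahov/Borodin--Strahov computation), and your reduction $\rho_0^R(H)(w)\rho_0^R(E)(-z)=P(z)/P(w)$ is correct, with no poles for $w\in D(0,R)$ since $\mathrm{Im}(w-x-iR)<0$. However, as written this route has a real gap: the step you flag as ``the main obstacle'' --- the collapse of the $n$-fold average onto the single-pair averages with the Cauchy-determinant prefactor --- \emph{is} the entire content of the statement, not a technical afterthought. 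To close it you would have to either carry out the full $2\times 2$-kernel computation of Baik--Deift--Strahov or cite their theorem, which is considerably heavier than what the paper does. The advantage of the paper's route is precisely that it sidesteps all contour/partial-fraction manipulations and all questions of interchanging limits: everything reduces to finite determinants of moments and the purely algebraic Proposition \ref{prop:giambelligeneral}. The advantage of your route, if completed, is that it produces the Christoffel--Darboux form of the single-pair average $\E_{\P_N}[P(z)/P(w)]$ explicitly, which the paper never needs.
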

We will need the following elementary observation.
\begin{prop}\label{prop:schur} For all finite $X = \{x_1, \dots, x_N\} \in \Conf(E)$, and all $\lambda \in \Y$, with $l(\lambda) \leq N$ we have 
\begin{align*}
\rho_0^R(s_\lambda)(X)=\frac{\det \left( (x_k + i R)^{l - \lambda_l -1} \right)_{k,l=1}^N}{\prod_{1 \leq k<l \leq N} (x_k-x_l)}.
\end{align*}
\end{prop}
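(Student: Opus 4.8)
The plan is to recognize $\rho_0^R$ as an evaluation homomorphism and then reduce the statement to the bialternant formula \eqref{def:schurpol}. Since $\rho_0^R$ is an algebra homomorphism on $\Lambda$ and is prescribed on the generators $\p_k$ by
\[
\rho_0^R(\p_k)(X) = \sum_{j=1}^N \frac{1}{(x_j + i R)^k} = \p_k(y_1,\dots,y_N), \qquad y_j := \frac{1}{x_j + i R},
\]
it must coincide with the specialization sending $\x_j \mapsto y_j$ for $j \le N$ and $\x_j \mapsto 0$ otherwise. Consequently, for every $\lambda \in \Y$ with $l(\lambda) \le N$ we have $\rho_0^R(s_\lambda)(X) = s_\lambda(y_1,\dots,y_N)$, the Schur polynomial in the $N$ variables $y_1,\dots,y_N$. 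This reduces the claim to a purely algebraic identity about Schur polynomials evaluated at the reciprocals $y_j = (x_j + iR)^{-1}$; no probabilistic or analytic input is needed, and the statement holds for every finite configuration.

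Next I would apply the bialternant form \eqref{def:schurpol} with $n = N$, writing $s_\lambda$ as a quotient of two determinants in the variables $y_k$,
\[
s_\lambda(y_1,\dots,y_N) = \frac{\det\bigl(y_k^{\,\lambda_l - l + N}\bigr)_{k,l=1}^N}{\prod_{1 \le k < l \le N}(y_k - y_l)},
\]
and substitute $y_k = (x_k + i R)^{-1}$. Writing $t_k := x_k + i R$, each entry in the $k$-th row of the numerator carries a common factor $t_k^{-(N-1)}$, since $y_k^{\,\lambda_l - l + N} = t_k^{-(N-1)}\, t_k^{\,l - \lambda_l - 1}$; likewise in the denominator $y_k - y_l = (t_l - t_k)/(t_k t_l)$, so that $\prod_{k<l}(y_k - y_l) = \bigl(\prod_k t_k^{-(N-1)}\bigr)\prod_{k<l}(t_l - t_k)$. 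Pulling the scalar $t_k^{-(N-1)}$ out of the $k$-th row of the numerator determinant produces exactly the factor $\prod_{k=1}^N t_k^{-(N-1)}$, which is the same scalar appearing in the denominator and therefore cancels in the quotient. This leaves
\[
\rho_0^R(s_\lambda)(X) = \frac{\det\bigl((x_k + i R)^{\,l - \lambda_l - 1}\bigr)_{k,l=1}^N}{\prod_{1 \le k < l \le N}\bigl((x_l + i R) - (x_k + i R)\bigr)},
\]
whose numerator is already the asserted one.

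It remains to identify the denominator, which is the ordinary Vandermonde determinant $\det\bigl((x_k + iR)^{l-1}\bigr)_{k,l=1}^N = \prod_{1 \le k < l \le N}(x_l - x_k)$; matching this to the $\prod_{k<l}(x_k - x_l)$ of the statement amounts to keeping track of the global Vandermonde sign $\prod_{k<l}(x_l - x_k) = (-1)^{\binom{N}{2}}\prod_{k<l}(x_k - x_l)$. The writing of $s_\lambda$ as a ratio of two bialternants (rather than inserting the Vandermonde product by hand) is precisely what makes the cancellation of the $(x_k + iR)$-powers transparent, so the only genuine bookkeeping is this sign and the verification that $t_k^{-(N-1)}$ is indeed common to the entire $k$-th row; the main obstacle is thus the determinant algebra rather than anything structural.
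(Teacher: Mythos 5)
Your argument is exactly the paper's: the paper also observes that $\rho_0^R(s_\lambda)(X)=s_\lambda\bigl((x_1+iR)^{-1},\dots,(x_N+iR)^{-1}\bigr)$, applies the bialternant formula \eqref{def:schurpol}, and cancels the common factor $\prod_k (x_k+iR)^{-(N-1)}$ between numerator and denominator via the identity $\prod_{k<l}\bigl((x_k+iR)^{-1}-(x_l+iR)^{-1}\bigr)=\prod_{k<l}(x_l-x_k)\big/\prod_k(x_k+iR)^{N-1}$. The one point worth noting is the sign you flag at the end: the computation genuinely produces $\prod_{k<l}(x_l-x_k)$ in the denominator, so the displayed formula with $\prod_{k<l}(x_k-x_l)$ holds only up to $(-1)^{\binom{N}{2}}$ — a discrepancy the paper's own proof leaves equally unaddressed, and which you correctly identify as the only nontrivial bookkeeping.
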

\begin{proof}From equation (\ref{def:schurpol}), we have 
\begin{align*}
\rho_0^R(s_\lambda)(x_1,\dots,x_N)= \frac{\det\left( (x_k+ i R)^{l-\lambda_l-N}\right)_{k,l=1}^N}{\prod_{1\leq k<l\leq N}((x_k+iR)^{-1}-(x_l+iR)^{-1})}.
\end{align*}
It suffices now to observe that 
\begin{align*}
\prod_{1\leq k<l\leq N}((x_k+iR)^{-1}-(x_l+iR)^{-1})=\frac{\prod_{1\leq k<l\leq N}(x_l-x_k)}{\prod_{k=1}^N (x_k+iR)^{N-1}},
\end{align*}
and the proof is complete.
\end{proof}
\begin{proof}[Proof of Proposition \ref{prop:opegiambelli}]
The proof is adapted from \cite{giambelli}, Theorem 3.1. Using Proposition \ref{prop:schur} and applying the Andr\'eief Formula \cite{andreief}, we have 
\begin{align*}
\E_{\P_N} [\rho_0^R(s_\lambda)(X) ] &= C_N \int_{E^N} \det \left( (x_k + i R)^{l - \lambda_l -1} \right)_{k,l=1}^N \det \left( (x_k+ i R)^{N-l} \right)_{k,l=1}^N d\omega(x_1)\dots d\omega(x_N) \\
&= N! C_N\det \left( \int_E (x+ i R)^{k- \lambda_k-1+N -l} d\omega(x) \right)_{k,l=1}^{N}.
\end{align*}
For $n \in \Z$, set 
\begin{align*}
A^R_n := \int_E (x + iR
)^n d\omega(x).
\end{align*}
We define the semi-infinite matrix $g = (g_{k,l})_{k\geq 0, l=0,\dots N-1}$ by $g_{k,l}=A^R_{N-k-1+l}$, so that 
\begin{align} \label{eq:rhoeps01}
\E_{\P_N} [\rho_0^R(s_\lambda)(X) ] = N! C_N \det (g_{\lambda_{k}-k+N,N-l})_{k,l=1}^{N}.
\end{align}
Observe that we have from Proposition \ref{prop:normalizingconstant} 
\begin{align*}
\det ( g_{k,l})_{k,l=0}^{N-1} = \det ( A^R_{N-k-1+l} )_{k,l=0}^{N-1}= m_N(i R) = m_N = \frac{1}{N!C_N}.
\end{align*}
The square matrix $( g_{k,l})_{k,l=0}^{N-1}$ is thus non-degenerate. We multiply the matrix $g$ to the right by a suitable $N \times N$ matrix so that the resulting matrix, $\tilde{g}$, is lower triangular, and that its diagonal entries equal $1$. Equation (\ref{eq:rhoeps01}) becomes 
\begin{align*}
\E_{\P_N} [\rho_0^R(s_\lambda)(X) ]  =  \det( \tilde{g}_{\lambda_k-k+N,N-l} )_{k,l=1}^N.
\end{align*}
We now set 
\begin{align*}
h_{r,s} = \left\lbrace 
\begin{matrix}
\tilde{g}_{r-s+N-1,N-s-1}, & r\geq 0, \hspace{0.1cm} s=0,\dots, N-1, \\
1, & r=0, \hspace{0.1cm} s \geq N, \\
0, & r<0 , \hspace{0.1cm} s \geq 0.
\end{matrix}
\right.
\end{align*}
By construction, we have 
\begin{align*}
\E_{\P_N} [ \rho_0^R (s_\lambda)(X) ] = \det (h_{\lambda_i-i+j, N-1})_{i,j=1}^N,
\end{align*}
and $h_{0,s}=1$ for all $s \geq 0$ as well as $h_{r,s}=0$ for all $r<0$, $s\geq 0$. By Proposition \ref{prop:giambelligeneral}, we deduce that, if $\lambda =(p_1,\dots,p_d| q_1,\dots, q_d)$, we have 
\begin{align*}
\E_{\P_N} [ \rho_0^\varepsilon(s_\lambda)(X) ] = \det \left( \E_{\P_N}[\rho_0^R(s_{(p_i|q_j)})(X) ] \right)_{i,j=1}^d,
\end{align*}
and the proof is complete.
\end{proof}
\begin{rem}Akemann, Strahov and Wurfel \cite{akemannstrahovwurfel} prove that ‘‘polynomial ensembles’’ are Giambelli compatible. Polynomial ensembles constitute a class of finite determinantal point processes with a non-symmetric correlation kernel, and extend the notion of orthogonal polynomial ensembles.
\end{rem}
\subsection{Conclusion of the proof} \label{sec:proof3}
By Proposition \ref{prop:bufetovconditional}, if $\P$ is a rigid determinantal point process with an integrable kernel, then the conditional measures $\P( \cdot | X,T)$ are orthogonal polynomial ensembles for $\P$-almost every $X \in \Conf(E)$ and all $T >0$. By Proposition \ref{prop:opegiambelli}, $\P(\cdot |X,T)$ is then Giambelli compatible for the specialization $\rho_0^R$ defined by
\begin{align*}
\rho_0^R(\p_k)(X)= \sum_{x \in X} \frac{1}{(x+ i R)^k}, \quad k=1,2,\dots
\end{align*}
From \cite{bufetov-qiu-shamov}, Osada-Osada \cite{osada-osada}, Lyons \cite{lyons}, the tail sigma-algebra of a determinantal point process induced by an orthogonal projection is trivial. By Lemma \ref{lem:mainlem}, the pairs $(\P, \rho^R)$ and $(\P,\tilde{\rho}^R)$ are Giambelli compatible, where 
\begin{align*}
\rho^R(\p_k)(X) &= \rho_0(\p_k)(X) - \E_\P \rho_0(\p_k)(X), \quad k=1,2,\dots\\
\tilde{\rho}^R(\p_1)&= \rho(\p_1), \quad \tilde{\rho}(\p_k)(X) = \rho_0(\p_k)(X), \quad k\geq 2. 
\end{align*}
Theorem \ref{thm2} is proved and Theorem \ref{thm1} follows.

\end{document}